\definecolor{bleu_sombre}{rgb}{0,0,0.6}  \definecolor{rouge_sombre}{rgb}{0.8,0,0}\definecolor{vert_sombre}{rgb}{0,0.6,0}
\theoremstyle{plain}
\newtheorem{definition}{Definition}[section]
\newtheorem{theorem}[definition]{Theorem}
\newtheorem{remark}[definition]{Remark}
\newtheorem{corollary}[definition]{Corollary}
\newtheorem{proposition}[definition]{Proposition}
\newtheorem{lemma}[definition]{Lemma}
\newcommand{\dd}{\mathrm d}
\newcommand{\R}{\mathbf R}
\newcommand{\C}{\mathbf C}
\newcommand{\Z}{\mathbf Z}
\newcommand{\T}{\mathbf T}
\newcommand{\Op}{\mathsf{Op}_h^{B}}
\newcommand{\Opu}{\mathsf{Op}_1^{B}}
\begin{document}

\title{Quantum unique ergodicity for magnetic Laplacians on $\T^2$}

\author[L.~Morin]{L\'eo Morin}
\address{Department of Mathematics, University of Copenhagen, Universitetsparken 5, DK-2100 Copenhagen \O, Denmark}
\email{lpdm@math.ku.dk}

\author[G.~Rivi\`ere]{Gabriel Rivi\`ere}
\address{Laboratoire de Math\'ematiques Jean Leray, Universit\'e de Nantes, UMR CNRS 6629, 2 rue de la Houssini\`ere, 44322 Nantes Cedex 03, France}
\address{Institut Universitaire de France, Paris, France}
\email{gabriel.riviere@univ-nantes.fr}

\begin{abstract}
 Given a smooth integral two-form and a smooth potential on the flat torus of dimension $2$, we study the high energy properties of the corresponding magnetic Schr\"odinger operator. Under a geometric control condition on the magnetic field, we show that every sequence of high energy eigenfunctions equidistributes. Equivalently, the magnetic Laplacian on the torus satisfies the quantum unique ergodicity property even if the Liouville measure is not ergodic for the underlying classical flow (the Euclidean geodesic flow on the $2$-torus).
\end{abstract}

\maketitle

\section{Introduction}

Let $\T^2=\R^2/\Z^2$ be the standard Euclidean torus of dimension $2$ that we endow with a \emph{fixed} smooth and real-valued $2$-form $B(x_1,x_2)\dd x_1\wedge \dd x_2$ satisfying
\begin{equation}\label{e:quantization}
\widehat{B}_0:=\int_{\T^2} B(x_1,x_2)\dd x_1\wedge \dd x_2\in 2\pi\Z.
\end{equation}
One can define a magnetic (or Bochner) Laplacian\footnote{In fact, up to gauge choices, there is a one parameter family $\mathscr{L}_\alpha$ of such operators, where $\alpha$ varies in $\R^2/(2\pi\Z^2)$. Yet, our main result is the same for each parameter $\alpha$ (and for each gauge choice) and we omit it in the introduction.} $\mathscr{L}$ on $\T^2$. The magnetic Laplacian on $\R^2$ is 
$$
\mathscr{L}:=(-i\partial_{x_1}-A_1)^2+(-i\partial_{x_2}-A_2)^2,
$$
for some $(A_1,A_2)$ verifying $B=\partial_{x_1}A_2-\partial_{x_2} A_1.$ When viewed as an operator on the torus, $\mathscr{L}$ acts on functions $u\in\mathscr{C}^\infty(\R^2,\C)$ verifying the magnetic-periodicity condition
$$
\forall (x_1,x_2)\in\R^2,\ u(x_1,x_2)=e^{-\frac{i\widehat{B}_0}{2} x_1} u(x_1,x_2-1)=e^{\frac{i\widehat{B}_0}{2} x_2}u(x_1-1,x_2),
$$
provided~\eqref{e:quantization} holds true.
Such functions are denoted by $\mathscr{C}^\infty(\T^2,L)$ and we refer to Section~\ref{s:laplacian} for a precise definition and statements about this self-adjoint, Laplace type operator.

 Given $V\in\mathscr{C}^\infty(\T^2,\R)$, we are interested in solutions to the eigenvalue problem,
\begin{equation}\label{e:eigenvalue}
\left(\mathscr{L}+V\right)u_\lambda =\lambda^2 u_\lambda,\quad \| u_\lambda\|_{L^2}=1,
\end{equation}
as $\lambda\rightarrow +\infty$. In order to state our main result, we introduce the following function on $S^*\mathbf{T}^2\simeq \T^2\times\mathbf{S}^1$,
$$
B_\infty(x,\theta):=\lim_{T\rightarrow +\infty}\frac{1}{T} \int_0^T B(x+t\theta)\dd t,\quad (x,\theta)\in S^*\T^2.
$$
Recall that, if $k\cdot\theta \neq 0$ for every $k\in\Z^2\setminus\{0\}$, then $B_\infty(x,\theta)=\widehat{B}_0$ while for rational values of $\theta$, the function $B_\infty$ may depend in a nontrivial (but smooth) way on $x\in\mathbf{T}^2$. When $B_\infty>0$ everywhere, one often says that the magnetic field verifies a \emph{geometric control condition}~\cite{RauchTaylor1974, BardosLebeauRauch1992}. Our main theorem states that solutions to~\eqref{e:eigenvalue} enjoy a \emph{quantum unique ergodicity} property under this geometric assumption.

\begin{theorem}\label{t:maintheo} Suppose that $B_\infty(x,\theta)>0$ for every $(x,\theta)\in S^*\T^2$ and that $B,V\in\mathscr{C}^\infty(\T^2,\R)$. Then, for any sequence of solutions $(u_\lambda)_{\lambda\rightarrow +\infty}$ to~\eqref{e:eigenvalue} and for any $a\in \mathscr{S}^0(T^*\T^2)$, one has\footnote{Here, $\dd\theta$ denotes the Euclidean volume measure on the circle $\mathbf{S}^1=\{\theta\in\R^2:|\theta|=1\}$ and $\dd x$ the Lebesgue measure on $\T^2$.}
$$
\lim_{\lambda\rightarrow +\infty}\langle \mathsf{Op}_{\lambda^{-1}}^B(a)u_\lambda,u_\lambda\rangle =\frac{1}{2\pi} \int_{\T^2\times\mathbf{S}^1}a(x,\theta) \dd x\dd\theta,
$$
where $\mathsf{Op}_{\lambda^{-1}}^B$ is the semiclassical magnetic Weyl quantization (see Section~\ref{s:quantization}). In particular, for every $a\in\mathscr{C}^\infty(\T^2,\C)$, one has
\begin{equation}\label{e:lastparttheo}
\lim_{\lambda\rightarrow +\infty}\int_{\T^2}a(x)|u_\lambda(x)|^2\dd x=\int_{\T^2}a(x) \dd x.
\end{equation}
\end{theorem}
Here $\mathscr{S}^0(T^*\T^2)$ denotes the subspace of $\mathscr{C}^\infty(\T^2\times\R^2,\C)$ made of functions all of whose derivatives are bounded. The precise definition of the magnetic Weyl quantization~\cite{MantoiuPurice2004, IftimieMantoiuPurice2007, HelfferPurice2010, IftimieMantoiuPurice2019} is recalled in Section~\ref{s:quantization}, together with its main properties. This is the analogue for the space $\mathscr{C}^\infty(\T^2,L)$ of the Weyl quantization on $\mathscr{C}^\infty(\T^2,\C)$~\cite{Zworski}. This theorem shows that magnetic eigenfunctions become equidistributed in the high frequency limit as soon as the geometric control condition $B_\infty>0$ is satisfied. It is for instance verified if $B>0$ everywhere on $\T^2$ but this geometric condition is in fact much more general as it only involves the average of the magnetic field along the geodesic flow:
$$
\varphi^t:(x,\theta)\in S^*\T^2\mapsto (x+t\theta,\theta)\in S^*\T^2.
$$

The reader familiar with semiclassical analysis will notice that the magnetic field, through the vector potential $A$, is only a subleading term in the high-energy limit $\lambda \to \infty$. In other words, $\mathscr{L}$ is a small perturbation of the Laplacian, and for this reason the underlying classical flow in this setting is the geodesic flow $\varphi^t$ on the torus.

In fact, as we shall recall it in Section~\ref{s:semiclassicalmeasure}, standard arguments of semiclassical analysis show that any accumulation point of the sequence 
$$
W_\lambda:a\in \mathscr{C}^\infty_c(T^*\T^2,\C)\mapsto
\langle\mathsf{Op}_{\lambda^{-1}}^B(a)u_\lambda,u_\lambda\rangle 
$$
is a probability measure on $S^*\T^2$ that is invariant by the geodesic flow, as in the case of the free Laplacian. These measures are the so-called semiclassical measures of $\mathscr{L}+V$. In fact, the geodesic flow has many invariant measures (e.g. invariant measures carried by closed orbit)
and Theorem~\ref{t:maintheo} shows that, among these invariant measures, only the Liouville measure is semiclassical. This is referred to as a \emph{quantum unique ergodicity} property for the operator $\mathscr{L}+V$. Such a property does not hold for the free Laplacian on the torus. It is remarkable that, despite being subleading, the magnetic field is still strong enough to entail quantum unique ergodicity. As far as we know, Theorem~\ref{t:maintheo} provides the first family of deterministic Laplace type operators on a compact manifold for which \emph{any} sequence of high frequency eigenfunctions equidistributes.

It is also important to note that the underlying classical flow (meaning the geodesic flow on $S^*\T^2$) is completely integrable and thus far from being ergodic with respect to the Liouville measure. Hence, among other things, Theorem~\ref{t:maintheo} also shows that quantum ergodicity does not imply classical ergodicity. Other examples of quantum ergodic systems with non-ergodic classical flow were described by Gutkin in~\cite{Gutkin2009} and this converse quantum ergodicity problem is discussed in more details by Zelditch in~\cite[\S4.5]{Zelditch2010}. In particular, it seems that, so far, all the known counterexamples to converse quantum ergodicity involved a $\Z_2$-symmetry which is not the case of Theorem~\ref{t:maintheo}.

As we shall see in the proof, Theorem \ref{t:maintheo} remains true if we only suppose that the eigenvalue equation~\eqref{e:eigenvalue} is satisfied up to a $o_{L^2}(1)$ remainder. In fact,~\eqref{e:lastparttheo} will be shown to be true if we only make the assumption that~\eqref{e:eigenvalue} is verified up to a $o(\lambda^{1/2})$ remainder.

\begin{remark}
Since the online publication of the present work, Le Balc'h, Niu and Sun considered the related question of observability for the magnetic Schr\"odinger equation when $\widehat{B}_0= 0$~\cite{LeBalchNiuSun2025}. In that case, one cannot have $B_\infty>0$ everywhere and they exhibited sequences of $o_{L^2}(1)$-quasimodes that concentrate along certain closed orbits in phase space~\cite[\S9]{LeBalchNiuSun2025} -- see also~\cite[\S5.3]{AnantharamanFermanianMacia2015} for related constructions in the electric case. It is most likely that their construction could be adapted to the more general case where $\widehat{B}_0\neq 0$. See also Appendix~\ref{s:general} for a discussion on the case where $B_\infty$ is not positive everywhere.
\end{remark}

Once Theorem~\ref{t:maintheo} is proven, it is also natural to figure out if one can compute the rate of convergence. Unfortunately, our proof is not of very quantitative nature. Yet, in the case where the magnetic field $B$ is constant and where $V$ is identically $0$, we are able to provide an alternative (and more direct) proof of Theorem~\ref{t:maintheo}. This will allow us to prove the following quantitative statement:
\begin{theorem}\label{t:maintheo-quantitative} Suppose that $B\equiv\widehat{B}_0>0$ and that $V\equiv 0$. Then, there exist constants $C_0,N_0>0$ such that, for any solution $u_\lambda$ to~\eqref{e:eigenvalue} and for any $a\in \mathscr{S}^0(T^*\T^2)$, one has
$$
\left|\langle \mathsf{Op}_{\lambda^{-1}}^B(a)u_\lambda,u_\lambda\rangle -\frac{1}{2\pi} \int_{S^*\T^2}a(x,\theta) \dd x\dd\theta\right|\leq C_0\left\|a\right\|_{\mathscr{C}^{N_0}}(1+|\lambda|)^{-\frac12},
$$
where $\operatorname{Op}_{\lambda^{-1}}^B$ is the semiclassical magnetic Weyl quantization.
\end{theorem}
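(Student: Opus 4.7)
The plan is to exploit the explicit algebraic structure of $\mathscr{L}$ available when $B\equiv\widehat B_0$ is constant and $V\equiv 0$. Writing $\pi_j=-i\partial_{x_j}-A_j$ with $[\pi_1,\pi_2]=-i\widehat B_0$, the operator $\mathscr{L}=\pi_1^2+\pi_2^2$ is a harmonic oscillator: the ladder operators $\mathsf{a}=(\pi_1+i\pi_2)/\sqrt{2\widehat B_0}$, $\mathsf{a}^*=(\pi_1-i\pi_2)/\sqrt{2\widehat B_0}$ satisfy $[\mathsf{a},\mathsf{a}^*]=1$ and $\mathscr{L}=\widehat B_0(2\mathsf{a}^*\mathsf{a}+1)$, so the eigenvalue $\lambda^2$ corresponds to the Landau level $n=(\lambda^2-\widehat B_0)/(2\widehat B_0)\sim\lambda^2/(2\widehat B_0)$ with projector $\Pi_n$. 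The crucial companions are the guiding-centre operators $\Gamma_1=x_1+\pi_2/\widehat B_0$, $\Gamma_2=x_2-\pi_1/\widehat B_0$: they commute with each $\pi_j$ hence with $\mathscr{L}$ and $\Pi_n$, while $[\Gamma_1,\Gamma_2]=i/\widehat B_0$. I first treat symbols $a\in\mathscr{C}^\infty(\T^2,\C)$ depending only on $x$; the general $a\in S^0(T^*\T^2)$ will follow from a cut-off near $|\xi|=\lambda$ together with an Egorov averaging over one period $\pi/\widehat B_0$ of the magnetic flow, both contributing only $O(\lambda^{-1})$ remainders.

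Decomposing $a(x)=\sum_{k\in\Z^2}\hat a_k e^{2\pi ik\cdot x}$ and writing $k\cdot x=k\cdot\Gamma+(k_2\pi_1-k_1\pi_2)/\widehat B_0$, the commutation $[\Gamma,\pi]=0$ yields the exact factorisation
$$
e^{2\pi ik\cdot x}=e^{2\pi ik\cdot\Gamma}\,D_k,\qquad D_k:=\exp\!\bigl(2\pi i(k_2\pi_1-k_1\pi_2)/\widehat B_0\bigr).
$$
The operator $D_k$ is an oscillator displacement $D(\alpha_k)$ with $|\alpha_k|^2=2\pi^2|k|^2/\widehat B_0$, so the classical formula $\langle n|D(\alpha)|n\rangle=L_n(|\alpha|^2)e^{-|\alpha|^2/2}$ in the Fock basis produces the exact identity
$$
\Pi_n\,e^{2\pi ik\cdot x}\,\Pi_n=L_n\!\Bigl(\tfrac{2\pi^2|k|^2}{\widehat B_0}\Bigr)\,e^{-\pi^2|k|^2/\widehat B_0}\,e^{2\pi ik\cdot\Gamma}\,\Pi_n.
$$
The rate drops out of the Plancherel--Rotach asymptotic $L_n(y)e^{-y/2}\sim\pi^{-1/2}(ny)^{-1/4}\cos(2\sqrt{ny}-\pi/4)$, expressing the semiclassical limit $L_n(y)e^{-y/2}\to J_0(2\sqrt{ny})$: with $n\sim\lambda^2/(2\widehat B_0)$ and $y=2\pi^2|k|^2/\widehat B_0$, the scalar prefactor is bounded by $C\lambda^{-1/2}|k|^{-1/2}$ for every $k\neq 0$.

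Since $e^{2\pi ik\cdot\Gamma}$ acts unitarily on $\Pi_nL^2(\T^2,L)$, summing over Fourier modes for any unit $u_\lambda\in\Pi_nL^2(\T^2,L)$ yields
$$
\bigl|\langle\Op(a)u_\lambda,u_\lambda\rangle-\hat a_0\bigr|\leq C\lambda^{-1/2}\sum_{k\neq 0}|\hat a_k|\,|k|^{-1/2}\leq C\|a\|_{\mathscr{C}^{N_0}}\lambda^{-1/2},
$$
with $N_0$ a few derivatives past the summability threshold in dimension $2$, and the identity $\hat a_0=\int_{\T^2}a\,dx=(2\pi)^{-1}\int_{S^*\T^2}a\,dx\,d\theta$ closes the estimate. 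The main obstacle is the uniformity in $k$ of the Plancherel--Rotach bound, which is pointwise in $y$: in the oscillatory regime $|k|\ll\lambda$ a uniform version of Plancherel--Rotach suffices, while the transition/exponential regime $|k|\gtrsim\lambda$ is absorbed by splitting the Fourier sum at $|k|=\lambda$ and exploiting the Schwartz decay $\sum_{|k|>\lambda}|\hat a_k|=O_M(\lambda^{-M})$. The reduction from $a\in S^0(T^*\T^2)$ to $a\in\mathscr{C}^\infty(\T^2,\C)$ is then routine: the magnetic flow is $(\pi/\widehat B_0)$-periodic in semiclassical time, and Egorov over one period replaces $a(x,\xi)|_{|\xi|=1}$ by its cyclotron average, which depends on $(x,\xi)$ only through the guiding-centre coordinate $c=x-\xi^\perp/\widehat B_0$ and to which the same Bessel/Laguerre analysis applies.
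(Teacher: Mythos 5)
Your core computation takes a genuinely different route from the paper's. The paper never invokes the Landau-level structure: it proves an \emph{exact} commutator identity for $[\Op(|\xi-h\alpha|^2),\Op(a)]$ (Corollary~\ref{coro.comp.quadratic}), integrates the resulting invariance over one period $\pi/(h\widehat{B}_0)$ of the induced flow $\Phi_h^t$, and then estimates the $k\neq 0$ Fourier modes of the circle average by stationary phase, getting $O(h^{1/2}|k|^{-1/2})$ per mode. Your identity $\Pi_n e^{2\pi ik\cdot x}\Pi_n=L_n(y_k)e^{-y_k/2}e^{2\pi ik\cdot\Gamma}\Pi_n$ packages exactly the same oscillation in closed form: with $n\sim\lambda^2/(2\widehat{B}_0)$ and $y_k=2\pi^2|k|^2/\widehat{B}_0$ one has $2\sqrt{ny_k}\sim 2\pi|k|\lambda/\widehat{B}_0$, which is precisely the phase of the paper's oscillatory integral, so the Laguerre/Bessel factor \emph{is} their stationary-phase integral. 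What your approach buys is an exact, quantization-free argument for symbols $a=a(x)$, using only unitarity of $e^{2\pi ik\cdot\Gamma}$ on each Landau level and uniform Laguerre bounds; this is clean and correct (the splitting of the $k$-sum at the Laguerre turning point, using rapid decay of $\widehat{a}_k$, handles the failure of the $(ny)^{-1/4}$ bound there).

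Two points in your reduction from $a\in S^0(T^*\T^2)$ to $a=a(x)$ deserve more than the word ``routine'', since they constitute most of the paper's proof. First, a conceptual slip: the decay cannot be obtained by applying ``the same Bessel/Laguerre analysis'' to a function of the guiding centre $c$, because a function of $\Gamma$ alone commutes with $\Pi_n$ and acts boundedly within the ($\phi$-dimensional) guiding-centre space of each Landau level --- it contributes \emph{no} decay. The $\lambda^{-1/2}$ comes entirely from the cyclotron average (your Laguerre factor), i.e.\ from the part of $e^{2\pi ik\cdot x}$ that does \emph{not} depend on $\Gamma$; the averaged symbol retains a dependence on the cyclotron radius $|\xi-h\alpha|$ as well, whose localization to $1+O(h^{1/2})$ via the eigenvalue equation costs $O(h^{1/2})$, not $O(h^{-1})\cdot O(h^2)$ (this is the paper's \S4.2). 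Second, two technical facts are needed to run the averaging: the exactness of Corollary~\ref{coro.comp.quadratic} (a naive Egorov expansion over the time $\pi/(h\widehat{B}_0)$ would accumulate $O(h)\cdot O(h^{-1})=O(1)$ errors), and the $h^{|\beta|}$-weighted Calder\'on--Vaillancourt bound of Theorem~\ref{thm.cv}(2), because the averaged symbol oscillates at frequency $h^{-1}$ in $\xi$ and is therefore not in $S^0$ uniformly. With these supplied --- or, alternatively, by running your displacement-operator identity directly on $\Op(\widehat a_k(\xi)e^{2\pi i k\cdot x})$ rather than pre-averaging --- your argument is a valid alternative proof.
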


Even if the conclusion of this theorem is much stronger than the one of Theorem~\ref{t:maintheo}, we emphasize that its proof is technically less involved as it only deals with constant magnetic fields. In fact, it is in some sense close to the original strategy for proving quantum ergodicity~\cite{Shnirelman1974, Shnirelman2022, Zelditch1987, ColindeVerdiere1985}. Indeed, for constant magnetic fields and for $V=0$, one can apply an exact Egorov Theorem for the magnetic Weyl quantization up to times of order $\lambda$. This allows us to average the symbol $a$ along the magnetic ($\lambda$-dependent) flow -- rather than the geodesic flow if we would have worked in bounded times. This flow, which is an artefact of the cyclotron motion but for small magnetic fields, slowly deviates from the geodesic flow. After that, we can use stationary phase arguments to prove equidistribution properties of the magnetic dynamics in the regime $\lambda\rightarrow \infty$. As we shall detail at the end of the introduction, proving Theorem~\ref{t:maintheo} requires a more subtle analysis as there is no good enough Egorov Theorem in that general set-up that would make a direct connection with a magnetic flow.

\subsection{Earlier results for vanishing magnetic fields}\label{ss:review-torus}

 When $B =V\equiv 0$, it follows from the works of Cooke~\cite{Cooke1971} and Zygmund~\cite{Zygmund1974} that any accumulation point $\nu$ of a sequence $\nu_\lambda(\dd x):=|u_\lambda(x)|^2\dd x$ is of the form $h(x)\dd x$ where $h\in L^2(\T^2,\R_+)$. This result was refined by Jakobson~\cite{Jakobson1997} who proved that $h$ is in fact a trigonometric polynomial. This cannot be improved as it can be witnessed by considering the sequence of eigenfunctions $u_{k}(x_1,x_2)=\sin(2\pi x_1)e^{2i\pi kx_2}$ whose limit measure is $\sin^2(2\pi x_1)\dd x_1\dd x_2.$ See also~\cite{Jaffard1990} for earlier results in dimension $2$. Jakobson also proved that, in any dimension, any accumulation point is of the form $h(x)\dd x$ with $h\in L^1(\T^d)$. When $V\neq  0$ but still $B\equiv 0$, Bourgain, Burq and Zworski obtained observability properties for the limit measure in dimension $2$ in the case of low regularity potentials~\cite{BurqZworski2012, BourgainBurqZworski2013}. Still for general (but more regular) potentials and using the notion of two-microlocal measures~\cite{Fermanian95, Miller96,Nier96, Fermanian00}, Anantharaman and Maci\`a proved that the limit measures are absolutely continuous and enjoy an observability property in any dimension~\cite{Macia2010, AnantharamanMacia2014}. Together with Fermanian-Kammerer~\cite{AnantharamanFermanianMacia2015} and L\'eautaud~\cite{AnantharamanLeautaudMacia2016}, they broadened the scope of applications of these two-microlocal methods and extended these results to eigenfunctions of non-degenerate completely integrable systems on $\T^d$ and of Schr\"odinger operators on the disk. Motivated by the Aharonov-Bohm effect, Maci\`a also proved that these results remain true in the case where the magnetic field is identically $0$ but the potential vector is not~\cite{Macia2014}.

Our proof of Theorem~\ref{t:maintheo} will in fact follow this kind of semiclassical strategy through second-microlocalization methods. More precisely, it is closely related to the analysis made by Maci\`a and the second author~\cite{MaciaRiviere2018} in order to analyze eigenmodes of semiclassical operators of the form $-h^2\Delta+\varepsilon_h^2V$ with $h\ll \varepsilon_h\ll 1$. Indeed, as we shall see in this article, looking at the high frequency limit of the operator $\mathscr{L}+V$ corresponds in some sense to a subprincipal perturbation of the operator $-h^2\Delta$ of size $h$. Following earlier results on Zoll manifolds~\cite{MaciaRiviere2016, MaciaRiviere2019}, the key point in~\cite{MaciaRiviere2018} was to understand these perturbative results and two-microlocal procedures through the scope of the quantum averaging method appearing in Weinstein's seminal work on Schr\"odinger operators on $\mathbf{S}^d$~\cite{Weinstein1977}. In some sense, this will be again the mechanism at work behind our proofs and the magnetic nature of the problem will yield the extra regularity properties compared with~\cite{MaciaRiviere2018}. 

Finally, we recall that, given an orthonormal basis of eigenfunctions of $\mathscr{L}+V$, Marklof and Rudnick proved that one can find a density one subsequence of eigenfunctions such that the limit measure is $\dd x$ in the configuration space\footnote{Strictly speaking, the proof in that reference is given for $B=V\equiv 0$ but the proof could be adapted to deal with the general case.} $\T^2$~\cite{MarklofRudnick2012}. Theorem~\ref{t:maintheo} shows that, as soon as $B$ verifies the geometric control condition $B_\infty>0$, there is no need to extract a subsequence and equidistribution holds in the full phase space $S^*\T^2$.

\subsection{Quantum unique ergodicity}

 Marklof--Rudnick's Theorem is in fact the toral analogue of the classical quantum ergodicity Theorem of Shnirelman \cite{Shnirelman1974, Shnirelman2022}, Zelditch~\cite{Zelditch1987} and Colin de Verdi\`ere~\cite{ColindeVerdiere1985}. This theorem states that, as soon as the Liouville measure is ergodic for the geodesic flow of a compact Riemannian manifold $(M,g)$, then \emph{most of} the eigenfunctions of a \emph{given} orthonormal basis of Laplace eigenfunctions are equidistributed in phase space $S^*M$. This result is in fact true for more general elliptic operators whose underlying Hamiltonian flow is ergodic. See e.g. the works of Helffer, Martinez and Robert~\cite{HelfferMartinezRobert1987} for semiclassical Schr\"odinger operators. A few years later~\cite{RudnickSarnak1994}, Rudnick and Sarnak made the so-called \emph{quantum unique ergodicity conjecture} by predicting that, for \emph{compact negatively curved manifolds}, \emph{all} Laplace eigenfunctions should become equidistributed in the high frequency limit. Equivalently the Liouville measure should be the only semiclassical measure for the Laplace-Beltrami operator. Among other things, this was motivated by their result showing that, on arithmetic surfaces, there exists a specific basis of eigenfunctions, the Hecke eigenbasis, for which eigenfunctions put zero mass on closed hyperbolic geodesics in the high-frequency limit. Later on, building on earlier works with Bourgain~\cite{BourgainLindenstrauss2003}, Lindenstrauss proved that these Hecke eigenmodes become equidistributed in the high frequency limit~\cite{Lindenstrauss2006}. This property is often referred as \emph{arithmetic quantum unique ergodicity}.

For more general negatively curved manifolds or more general eigenbases, the conjecture remains completely open as far as we know. Yet, several important progress were made towards the understanding of this problem in the last twenty years. First, in the related toy-model of quantum cat-maps, it was proved by Faure, Nonnenmacher and De Bi\`evre that the quantum unique ergodicity conjecture does not hold true in dimension $2$~\cite{FaureNonnenmacherDeBievre2003}. On the higher dimensional analogues of this model, Kelmer even proved that arithmetic quantum unique ergodicity fails as soon as there exist invariant isotropic rational subspaces~\cite{Kelmer2010}. In the case of Laplace eigenfunctions on the Bunimovich stadium (for which ergodicity holds despite the weak hyperbolicity features of the billiard flow), Hassell proved that quantum unique ergodicity also fails~\cite{Hassell2010}. On the positive direction, Anantharaman proved that, on negatively curved manifolds, any accumulation point in phase space must have positive Kolmogorov-Sinai entropy and thus discarded full concentration along closed geodesics~\cite{Anantharaman2008}. Then, together with Koch and Nonnenmacher, they obtained more explicit lower bounds on this entropy~\cite{AnantharamanNonnenmacher2007, AnantharamanKochNonnenmacher2009}. See also~\cite{Riviere2010a, Riviere2010b} for more precise results in dimension $2$. More recently, for negatively curved surfaces, Dyatlov, Jin and Nonnenmacher proved that, in the high frequency limit, Laplace eigenfunctions should put some positive mass on every nontrivial open subset of $S^*M$~\cite{DyatlovJin2018, DyatlovJinNonnenmacher2022}. See also~\cite{DyatlovJezequel2024, AthreyaDyatlovMiller2024,Kim2024} for higher dimensional results in the same vein. These recent results are again specific to chaotic (Anosov) classical dynamics and they are based on a subtle harmonic analysis property originally developed by Dyatlov and Zahl~\cite{DyatlovZahl2016} to study spectral gaps in chaotic scattering and referred as a fractal uncertainty principle.

\medskip
It is worth mentionning that the dynamical mechanism at work in our setting is of very different nature as in these works on quantum ergodicity. Indeed, the underlying Hamiltonian flow for the operator $\mathscr{L}+V$ is the geodesic flow on the unit tangent bundle of the torus which is a \emph{completely integrable} dynamical system. Thus, the dynamics is of very different nature: existence of Kronecker tori, but also tori of periodic orbits. Moreover, in the case of the free Laplacian on $\T^2$, the discussion in \S\ref{ss:review-torus} shows that Laplace eigenfunctions are far from being quantum uniquely ergodic. For these reasons, the dynamical properties of the underlying classical flow cannot be the only reason for the equidistribution properties of the eigenfunctions of $\mathscr{L}+V$ and this is rather the action of the magnetic field that will produce equidistribution result in our setting. 

In this direction, we emphasize that the geometric control condition $B_\infty>0$ was already identified by Glass and Han-Kwan in~\cite[Prop.~5.1]{GlassHanKwan2012} as playing an important role when studying the classical dynamics induced by a magnetic field. Indeed, under this geometric assumption and given any nonempty open set $\omega\subset\T^2$ and any $T_0>0$, they proved that one can find $R_0>0$ such that, for every $(x_0,\xi_0)\in T^*\T^2$ verifying $|\xi_0|\geq R_0$, one has 
\begin{equation}
\label{e:daniel}
x([0,T_0])\cap \omega \neq \emptyset,
\end{equation}
where $(x(t),\xi(t))$ solves the Cauchy problem
\begin{equation}
\begin{cases}
(x',\xi')=(\xi,B(x)\xi^\perp),\\ (x(0),\xi(0))=(x_0,\xi_0).
\end{cases}
\end{equation}

In other words, a particle with a high momentum inside a magnetic field verifying $B_\infty>0$ explores all the configuration space. Even if we will not exploit this dynamical property explicitly as our limit measures are not invariant by the dynamics~\eqref{e:daniel}, it already illustrates how the geometric control condition on $B$ induces a kind of minimality of the magnetic dynamics on the classical side.

Regarding the influence of magnetic fields towards quantum unique ergodicity, we can mention the works of Zelditch for magnetic Laplacians on surfaces of constant negative curvature~\cite{Zelditch1992b}. In that reference, he proved that, if $B$ is constant but strong (meaning $\widehat{B}_0$ is equivalent to $\lambda/\sqrt{2}$), then the corresponding eigenfunctions become equidistributed in phase space (with no need to extract a subsequence). In that set-up, the reason for equidistribution comes from the ergodic properties of the classical flow. Indeed, in his case, the underlying classical flow is the horocycle flow (and not the geodesic flow) which has a \emph{unique} invariant measure and semiclassical measures are invariant by this flow. See also~\cite{CharlesLefeuvre2024} for recent developments by Charles and Lefeuvre in this direction and~\cite{MarklofRudnick2000, Rosenzweig2006} for related results on quantum maps by Marklof--Rudnick and Rosenzweig using unique ergodicity of the classical map. Besides the fact that the classical flow is not uniquely ergodic for the Liouville measure, a notable difference of our work with~\cite{Zelditch1992b, CharlesLefeuvre2024} is that we work in the regime where $\widehat{B}_0$ is fixed while these references consider the strong magnetic field regime where $|\widehat{B}_0|\rightarrow\infty$. As highlighted by Ceki\'c and Lefeuvre in~\cite{CekicLefeuvre2024}, an interesting issue would be to understand more generally the asymptotic regularity of eigenfunctions as $\|(\lambda,\widehat{B}_0)\|\rightarrow \infty$ which would encompass all the different regimes. 

Finally, we can mention that another efficient mechanism for producing quantum uniquely ergodic bases of eigenfunctions is given by probabilistic tools. For instance, in the case of the round sphere, one has large multiplicity in the Laplace spectrum and there is thus a large choice of orthonormal basis. This set of eigenbasis can be endowed with a natural probability measure and it was proved by Zelditch that most orthonormal bases have the quantum ergodicity property~\cite{Zelditch1992a}. This was later refined by VanderKam to show that \emph{almost all orthonormal bases} on $\mathbf{S}^d$ enjoy the quantum unique ergodicity property~\cite{VanderKam1997}. In a different direction but still of probabilistic nature, we refer to the works of Bourgade, Yau and Yin~\cite{BourgadeYau2017, BourgadeYauYin2020} and the references therein for quantum unique ergodicity results for eigenfunctions of Wigner matrices, i.e.
random matrices with matrix elements distributed by identical mean-zero random variables.

\subsection{Strategy of proof}

We start in Section \ref{s:laplacian} by giving a precise definition of the magnetic Laplacians $\mathscr{L}_\alpha$ on the torus. In particular these operators act on a space of functions with a twisted periodicity condition denoted by $L^2(\T^2,L)$, see \eqref{eq.CL}. We also give an explicit description of the spectrum in the case of constant magnetic fields.

Then, in Section \ref{s:quantization} we introduce the magnetic Weyl quantization, in the spirit of \cite{MantoiuPurice2004, IftimieMantoiuPurice2007, HelfferPurice2010, IftimieMantoiuPurice2019}. Given a symbol $a \in \mathscr C^{\infty}(\T^2 \times \R^2)$, we show that the quantization $\Op (a)$ defines a nice operator on smooth subspaces of $L^2(\T^2,L)$, under some standard regularity assumptions on $a$. We also prove a product formula for this quantization, and a Calder\'on-Vaillancourt Theorem, following the standard techniques of pseudodifferential calculus.

These tools being introduced, we proceed in the remaining sections to the study of semiclassical measures. We begin in Section \ref{s:proof-quantitative} with a proof of Theorem~\ref{t:maintheo-quantitative} which deals with the case of \emph{constant} magnetic fields. Under this assumption, we can directly use the properties of this quantization procedure together with the so-called quantum averaging method and classical estimates on oscillatory integrals on the circle. More precisely, in that case, one has an exact Egorov Theorem involving a $\lambda$-dependent flow\footnote{For a fixed time, this flow converges to the geodesic flow as $\lambda\rightarrow+\infty.$} which is periodic of period of order $\lambda$. When averaging the magnetic Wigner distribution, one can replace the symbol $a$ by the average of $a$ over this long period. Then, a stationary phase argument shows that this average converges uniformly to $\int_{S^*\T^2}a\dd x\dd\theta$.

\medskip
In the general case, no good enough Egorov Theorem is avalaible, nor a periodic flow. Hence, one needs to proceed in a different way. Up to dividing by $\lambda^2= h^{-2}$, we can write the eigenvalue equation \eqref{e:eigenvalue} in a semiclassical way,
\[h^2 (\mathscr{L} +V )u_h = u_h,\]
and we consider a sequence $(u_h)_{h\to 0}$ satisfying this equation -- or more generally an approximate version, i.e.
\[h^2 (\mathscr{L} +V )u_h = u_h+r_h,\quad\|r_h\|_{L^2}=o(h^2),\]
We will first show that the sequence of measures $|u_h(x)|^2 \dd x$ has only $\dd x$ as an accumulation point. To do so, we introduce in Section \ref{s:semiclassicalmeasure} the phase-space Wigner distribution,
\[ \langle w_h, a \rangle = \langle \Op (a) u_h, u_h \rangle, \quad a \in \mathscr{C}^{\infty}_c(\T^2 \times \R^2),\]
and study its accumulation points $\dd\mu$ in the limit $h \to 0$. In Lemma \ref{l:magnetic-wigner} we recall that a limit distribution must be a probability measure $\mu$ on $\T^2 \times \mathbf S^1$, which is invariant under the geodesic flow. This standard result follows from the fact that the principal symbol of $h^2( \mathscr{L} + V)$ is $|\xi|^2$. Indeed, we have the commutator estimate
\begin{equation}\label{e:proof-invariance}
\frac{2h}{i}  \langle w_h, \xi\cdot\partial_x a \rangle=\left\langle \left[h^2(\mathscr{L}+V),\Op (a)\right] u_h, u_h \right\rangle=\mathscr{O}(\|r_h\|_{L^2}).
\end{equation}
Hence, as soon as $\|r_h\|_{L^2}=o(h)$, we can conclude that the limit $\mu$ satisfies $\xi\cdot\partial_x \mu=0$ which amounts to prove the invariance by the geodesic flow.

\medskip

In fact, any geodesic-invariant measure on the torus can be decomposed according to periodic and dense orbits. For irrational angles $\theta$, the geodesic $t\in\R \mapsto x+t\theta\in\T^2$ is dense and the measure $\mu$ restricted to these angles is the Lebesgue measure along the $x$-variable (see Lemma \ref{l:anantharamanmacia}). We aim at proving that this is also the Lebesgue measure in $x$ when restricted to rational angles -- where the geodesic flow is periodic. To do this, we only have to describe the measure $\mu$ along periodic orbits and this is the aim of Section \ref{s:periodic}. This section is the most delicate part of the proof and our strategy consists in using the subprincipal terms of the operator to unveil extra invariance properties besides the one of the geodesic flow.

\medskip
In view of understanding the mechanism at work, suppose for a moment that $B\equiv \widehat{B}_0$. In that case, the properties of the magnetic Weyl quantization give the following second order term in the commutator estimate,
\begin{equation}\label{e:key-relation}
\left[h^2\mathscr{L},\Op (a)\right]\simeq\frac{2h}{i}\Op\left(\xi\cdot\partial_x a +h\widehat{B}_0(\xi_2\partial_{\xi_1} a-\xi_1\partial_{\xi_2} a)\right) +\mathscr{O}(h^3).
\end{equation}
This is precisely the term $h\widehat{B}_0(\xi_2\partial_{\xi_1} a-\xi_1\partial_{\xi_2} a)$ that will be responsible with the extra regularity property we are aiming at, both along the $x$ and the $\xi$-variables. In order to understand the restriction of the measure to the rational angle $\theta=(0,1)$ (for instance), we will in fact rescale the variables along the $\xi_1 = \xi \cdot \theta^\perp$ direction and it will be sufficient to consider test functions of the form $a_h(x,\xi)=\tilde{a}(x_1,\xi,\xi_1/h^{\frac12})$ in view of ruling out concentration phenomena along such a rational angle. For such an observable, the above equality rewrites, up to remainder terms,
$$
\left[h^2\mathscr{L},\Op (a_h)\right]\simeq\frac{2h^{\frac{3}{2}}}{i}\Op\left(\eta\partial_{x_1} \tilde{a} +\widehat{B}_0\partial_{\eta} \tilde{a}\right),\quad\eta=\frac{\xi_1}{h^{\frac12}}.
$$
Hence, if we implement this property in the relation~\eqref{e:proof-invariance}, we find that, as soon as $r_h=o_{L^2}(h^{3/2})$, the limit measure verifies the extra invariance property $\eta\partial_{x_1} \mu+\widehat{B}_0\partial_\eta \mu=0$ along the direction $\theta=(0,1)$. It allows to derive some extra regularity along the $x_1$-variable in this direction using that $\widehat{B}_0\neq 0$. For non-constant magnetic fields, we will end up with a relation of the form 
\begin{equation}\label{eq.exB}
\eta\partial_{x_1} \mu+B_\infty(x,\theta)\partial_\eta \mu=0.
\end{equation}

Such a strategy was initiated by Maci\`a in~\cite{Macia2010} through the use of $2$-microlocal measures. See also~\cite{Wunsch2008, VasyWunsch2009} for related results in terms of Lagrangian regularity. More precisely, we can make the above formal argument rigorous by  proceeding as in~\cite{MaciaRiviere2018}. Namely, through the introduction of these rescaled extra variables $\eta$, we split the eigenfunctions between the part that is localized close to a given rational angle (at distance $\lesssim h^{\frac 12}$), and away from it (at distance $\gg h^{\frac 12}$). 

In Section \ref{sec.noncompactpart}, we analyse the part at distance $\gg h^{\frac 12}$ (i.e. when the rescaled variable $\eta$ is larger than $R$, for some very large constant $R$). This part gives rise to the Lebesgue measure $\dd x$ when $h\rightarrow 0^+$ and $R \to \infty$. Here, the magnetic field has no impact: it only has to be small enough.

Then, in Section \ref{sec.compactpart} we analyse the part at distance $\leq h^{\frac 12}$ (when $\eta \leq R$). In Lemma~\ref{l:invariance-2microlocal}, we show that the induced microlocal measure is invariant by another flow involving the magnetic field $B$, similar to \eqref{eq.exB}. When $B_\infty>0$, this extra invariance property results into the fact that the $2$-microlocal measure is $0$. Indeed, the transport equation \eqref{eq.exB} forces mass to leave any bounded region in $\eta$.

This will conclude the proof of the second part of Theorem~\ref{t:maintheo}, meaning that any semiclassical measure $\mu$ projects to the Lebesgue measure along the $x$-variable as soon as $r_h=o(h^{\frac32})$:
\begin{equation} \label{eq.mufactor}
 \dd \mu (x,\xi)  = \dd x \otimes  \dd \nu(\xi), 
 \end{equation}
for some probability measure $\nu$ on $\mathbf S^1$. Finally in Section~\ref{s:proof}, we use the properties of the magnetic Weyl quantization to prove that $\nu$ must be the uniform measure. More precisely, once we know that $\mu$ is of the form \eqref{eq.mufactor}, we can restrict ourselves to test functions that are of the form $a(\xi)$ in~\eqref{e:proof-invariance}. Then, as soon as $r_h=o_{L^2}(h^2)$,~\eqref{e:key-relation} combined with~\eqref{e:proof-invariance} will ensure the rotational invariance 
\[0=\xi_2\partial_{\xi_1}\nu-\xi_1\partial_{\xi_2}\nu=-\partial_\theta\nu,\]
from which we infer that $\nu$ is the Lebesgue measure on $\mathbf{S}^1$.

\medskip
In Appendix~\ref{s:general}, we discuss what happens when $B_\infty(x,\theta)$ is not positive everywhere and what partial conclusions can be obtained in that general case using the methods of the present article.

\begin{remark}
 All along the article, we suppose that $\widehat{B}_0>0$ for the sake exposition. Yet, the main results remain true when $\widehat{B}_0<0$ with the assumption that $B_\infty<0$ everywhere.
 \end{remark}

\subsection*{Acknowledgements}
 We are most grateful to Bernard Helffer for bringing to our attention the magnetic Weyl calculus originally developed in~\cite{MantoiuPurice2004} as a convenient and elegant way of quantizing observables in the presence of magnetic fields. The second author also thanks Fabricio Maci\`a for discussions over the years regarding the study of semiclassical measures for completely integrable systems. We benefited from two very detailed reports from anonymous referees that greatly helped us to improve the exposition of the article. We warmly thank these two referees for their suggestions and questions.

LM was supported by the ERC Advanced Grant MathBEC - 101095820. (This work is funded by the European Union. Views and opinions expressed are however those of the author only and do not necessarly reflect those of the European Union or the European Research Council. Neither the European Union nor the granting authority can be held responsible for them.)

GR acknowledges the
support of the Institut Universitaire de France, the PRC grant ADYCT (ANR-20-CE40-0017) and the Centre Henri Lebesgue (ANR-11-LABX-0020-01).

\section{Magnetic Laplacian on the torus}\label{s:laplacian}

 A magnetic field on a configuration space consists in a $2$-form $\mathbf{B}$. The difficulty when defining a Schr\"odinger operator quantizing this magnetic field comes from the fact that $\mathbf{B}$ may not derive from a potential (i.e. it cannot be written $\mathbf{B}=\dd A$). Note also that, even if there exists such a primitive, there is not necessarly a unique choice for $A$. The first difficulty is handled by dealing with functions taking their values in a certain line bundle and by considering an appropriate Laplace type operator acting on this line bundle. In the case where the configuration space is the $2$-torus, this construction is particularly simple and explicit and we will review it in this section. This will give rise to a magnetic Schr\"odinger operator which can be viewed as a periodized version of the magnetic Laplacian on $\R^2$. Finally, as we shall see, there is not a single choice of such operators and there is a whole family of allowed line bundles parametrized by some parameter $\alpha\in\R^2/2\pi\Z^2$. This is related to the Aharonov--Bohm effect in the physics literature.

\subsection{Definitions}

 In the $2$-dimensional case, a magnetic field $\mathbf{B}$ can be identified with a function $B$ once we have fixed the volume form $\dd x_1\wedge\dd x_2$. Thus, we let $B \in \mathscr{C}^\infty(\T^2,\R)$ be a periodic magnetic field that is fixed once and for all. It can be written as the sum of its Fourier series, 
\begin{equation}
B(x) = \sum_{p \in \Z^2} \widehat{B}_p e^{2i \pi p \cdot x }, \qquad \widehat{B}_p = \int_{\T^2} B(x) e^{-2i\pi p \cdot x} \dd x. 
\end{equation}
One has
\begin{lemma}\label{l:potential} Let $B \in \mathscr{C}^\infty(\T^2,\R)$. Then, there exists $A^{\rm{per}}\in\mathscr{C}^\infty(\T^2,\R^2)$ such that
   $$B = \partial_1 A_2 - \partial_2 A_1,$$ 
for  $A = A^0 + A^{\rm{per}}$,
 and $A^0(x_1,x_2) = \frac{\widehat{B}_0}{2}\big( -x_2 , x_1 \big)$.
\end{lemma}
\begin{proof}
One has that $\int_{\T^2}(B-\widehat{B}_0)\dd x_1\wedge \dd x_2=0$. Hence, there exists a one-form $\alpha=A_1^{\rm{per}}\dd x_1+ A_2^{\rm{per}}\dd x_2$ in $\Omega^1(\T^2)$ such that $\dd\alpha= (B-\widehat{B}_0)\dd x_1\wedge \dd x_2.$ Hence, we can set $A=(A_1^{\rm{per}},A_2^{\rm{per}})$ which has the expected property. 
\end{proof}

\begin{remark}
Note that the choice of $A^{\operatorname{per}}$ is not unique and it can be modified by a gradient vector field $\nabla\phi$, with $\phi\in\mathscr{C}^\infty(\T^2,\R)$ (the so-called gauge choice). Yet, as usual when studying magnetic Laplacians, it would result into unitarily equivalent operators. More precisely, if we change $A^{\rm{per}}$ to $A^{\rm{per}}+\nabla\phi$, the resulting magnetic Laplacian will be of the form $e^{i\phi}\mathscr{L}e^{-i\phi}$ (see the precise expression for $\mathscr{L}$ below).
Hence, eigenfunctions are modified by multiplication by $e^{i\phi}$ which does not affect the set of semiclassical measures thanks to the composition rule for pseudodifferential operators.
\end{remark}

We define the magnetic translations associated to $\widehat{B}_0$ by
\begin{equation}
\mathcal T_m^B u (x) = e^{ \frac{i \widehat{B}_0}{2} m \wedge x} u(x-m), \qquad  u \in L^2(\R^2), \qquad m \in \Z^2,
\end{equation}
where $m \wedge x = m_1 x_2 - m_2 x_1$.
The class of smooth functions invariant under these translations is denoted by
\begin{equation}\label{eq.CL}
\mathscr{C}^\infty(\T^2,L) = \big\lbrace u \in \mathscr{C}^\infty(\R^2), \quad \mathcal T_m^B u = u \quad \forall m \in \Z^2 \big\rbrace.
\end{equation}
Note that, for this space to make sense, we need the commutation property $\mathcal T_{m}^B \mathcal T^B_{m'} = \mathcal T_{m'}^B \mathcal T^B_{m}$, which translates in the flux condition $\widehat{B}_0 \in 2\pi \mathbf Z$. From a geometrical point of view, these functions can be seen as sections of a line bundle over $\T^2$. The notation $L$ refers to this line bundle, even though we do not need this interpretation in the article. In fact, when considering the limit of large magnetic fields, one usually writes the line bundle as $L^{\otimes \phi}$ (where $\widehat{B}_0=2\pi\phi$) to emphasize the dependence in $\phi$. In the present work, $\phi$ is fixed and we drop this exponent to alleviate the notations. It is worth noting that elements in $\mathscr{C}^\infty(\T^2,L)$ are stable by multiplication by a function in $\mathscr{C}^\infty(\T^2,\C)$.

\begin{remark} The line bundle on $\T^2$ is the set $L=\R^2\times \C/_\sim$ where
$$
\forall m\in\Z^2,\quad(x,u)\sim \left(x-m, e^{\frac{i\widehat{B}_0}{2}m\wedge x}u\right).
$$
The corresponding projection is given by $(x,u)\in L\mapsto x\in\T^2$ and the set $\mathscr{C}^\infty(\T^2,L)$ can be identified with the set of smooth sections $\T^2\rightarrow L$. 
\end{remark}

Similarly we define the space of square integrable functions (resp. $H^2$ functions) invariant by the magnetic translations,
\begin{equation}
L^2(\T^2,L) = \big\lbrace u \in L^2_{\rm{loc}}(\R^2), \quad \mathcal T_m^B u = u \quad \forall m \in \Z^2 \big\rbrace,
\end{equation}
\begin{equation}
H^2(\T^2,L) = \big\lbrace u \in H^2_{\rm{loc}}(\R^2), \quad \mathcal T_m^B u = u \quad \forall m \in \Z^2 \big\rbrace.
\end{equation}
They are the closure of $\mathscr{C}^\infty(\T^2,L)$ for the norms
\begin{equation}
\| u \|_{L^2} = \Big( \int_{\T^2} | u(x) |^2 \dd x \Big) ^{\frac 1 2}, \qquad \| u \|_{H^2} = \Big( \sum_{|\alpha| \leq 2 } \int_{\T^2} | (\partial^{A})^\alpha u(x) |^2 \dd x \Big) ^{\frac 1 2},
\end{equation}
where $\partial^A=\left(\partial_{x_1}-iA^0_1, \partial_{x_2}-iA^0_2\right)$ and where we identify $\T^2$ with the fundamental domain $[0,1]^2$. Note that, as $\partial^A$ commutes with the magnetic translations, these norms and the associated scalar products are independent of the choice of fundamental domain for $\T^2$.

We define a familly of operators, the magnetic Laplacians on the torus.

\begin{theorem}
Assume $\widehat{B}_0 \in 2\pi \Z$ and let $A$ be as in Lemma~\ref{l:potential}. Then, for all $\alpha \in \R^2$, the operator given by
\[\mathscr{L}_\alpha u = \big( -i \partial_{x_1} - \alpha_1 - A_1 \big)^2 u + \big(-i\partial_{x_2} - \alpha_2 - A_2 \big)^2u, \qquad u \in H^2_{\rm{loc}}(\R^2),\]
commutes with the magnetic translations, i.e. $\mathscr{L}_\alpha \mathcal T_m^B = \mathcal T_m^B \mathscr{L}_\alpha$ for $m \in \Z^2$.
Moreover, $\mathscr{L}_\alpha$ is self-adjoint on $L^2(\T^2,L)$ with domain
\[D(\mathscr{L}_\alpha) = H^2(\T^2,L),\]
and has compact resolvent. Finally, if $\alpha - \alpha' \in 2\pi \Z^2$, then $\mathscr{L}_\alpha$ and $\mathscr{L}_{\alpha'}$ are unitarily equivalent.
\end{theorem}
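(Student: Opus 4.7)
The proof splits naturally into the four assertions and I would treat them in the order stated.

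\textbf{Commutation with magnetic translations.} The key is that the first order operators $P_j^\alpha:=-i\partial_{x_j}-\alpha_j-A_j$ individually commute with $\mathcal T_m^B$ for every $m\in\Z^2$. Writing $\mathcal T_m^B u(x)=e^{i\phi_m(x)}u(x-m)$ with $\phi_m(x)=\tfrac{\widehat{B}_0}{2}(m_1x_2-m_2x_1)$, a direct computation shows $-i\partial_{x_j}(\mathcal T_m^Bu)=\mathcal T_m^B(-i\partial_{x_j}u)+(\partial_{x_j}\phi_m)\mathcal T_m^Bu$. On the other hand, using that $A^{\mathrm{per}}$ is $\Z^2$-periodic and that $A^0(x-m)-A^0(x)=\tfrac{\widehat{B}_0}{2}(m_2,-m_1)$, one gets $A_j(x)\mathcal T_m^Bu=\mathcal T_m^B(A_j u)-(\partial_{x_j}\phi_m)\mathcal T_m^Bu$. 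Adding the two equalities and the term $-\alpha_j\mathcal T_m^Bu=\mathcal T_m^B(-\alpha_j u)$ (which is trivial since $\alpha_j$ is constant), the phase contributions cancel exactly. Hence $P_j^\alpha\mathcal T_m^B=\mathcal T_m^B P_j^\alpha$, and squaring and summing yields $\mathscr{L}_\alpha\mathcal T_m^B=\mathcal T_m^B\mathscr{L}_\alpha$. In particular $\mathscr{L}_\alpha$ sends $\mathscr{C}^\infty(\T^2,L)$ (resp. $H^2(\T^2,L)$) to $\mathscr{C}^\infty(\T^2,L)$ (resp. $L^2(\T^2,L)$).

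\textbf{Self-adjointness with domain $H^2(\T^2,L)$.} On $\mathscr{C}^\infty(\T^2,L)$, the operator $\mathscr{L}_\alpha=\sum_j(P_j^\alpha)^2$ is symmetric: this follows from integration by parts on the fundamental domain $[0,1]^2$, the boundary terms canceling thanks to the magnetic translation invariance. I would then define $\mathscr L_\alpha$ through the Friedrichs extension of the closed, non-negative quadratic form $q_\alpha(u)=\sum_j\|P_j^\alpha u\|_{L^2}^2$ on $\mathscr{C}^\infty(\T^2,L)$. Its form domain is the magnetic Sobolev space $H^1(\T^2,L)$. The symbol of $\mathscr L_\alpha$ is $|\xi|^2$, so the operator is uniformly elliptic of order $2$; combined with the smoothness of $A$ and $V$, elliptic regularity on the compact manifold $\T^2$ (applied equivalently on the line bundle or on a fundamental domain with the periodicity condition, using that $\mathcal T_m^B$ is an isometry commuting with $\mathscr L_\alpha$) identifies the operator domain as exactly $H^2(\T^2,L)$. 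This step is the main technical one: I would rely on a standard G\r{a}rding-type estimate $\|u\|_{H^2}^2\lesssim\|\mathscr L_\alpha u\|_{L^2}^2+\|u\|_{L^2}^2$ for $u\in\mathscr{C}^\infty(\T^2,L)$, obtained by commuting a parametrix of $-\Delta$ past $\mathscr L_\alpha$ and using the definition of the magnetic $H^2$-norm built from $\partial^A$.

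\textbf{Compact resolvent.} Since the fundamental domain is compact and the $H^2$-norm in question dominates the usual $H^2$-norm on the fundamental domain, the Rellich--Kondrachov theorem gives a compact embedding $H^2(\T^2,L)\hookrightarrow L^2(\T^2,L)$. Composing with $(\mathscr L_\alpha+i)^{-1}:L^2(\T^2,L)\to H^2(\T^2,L)$, which is bounded by the previous step, yields compactness of the resolvent.

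\textbf{Unitary equivalence.} If $\alpha'-\alpha=2\pi k$ with $k\in\Z^2$, consider $U:u\mapsto e^{2i\pi k\cdot x}u$. Since $e^{2i\pi k\cdot m}=1$, a direct verification using the defining relation of $\mathscr{C}^\infty(\T^2,L)$ shows that $U$ maps $L^2(\T^2,L)$ to itself unitarily and preserves $H^2(\T^2,L)$. Moreover, $-i\partial_{x_j}(e^{2i\pi k\cdot x}u)=e^{2i\pi k\cdot x}(-i\partial_{x_j}u+2\pi k_ju)$, hence $U^{-1}P_j^{\alpha'}U=P_j^{\alpha}$, which by squaring and summing gives $U^{-1}\mathscr{L}_{\alpha'}U=\mathscr{L}_{\alpha}$.

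The main obstacle is the identification of the operator domain in the second step; once the $H^2$-regularity of the weak solutions of $\mathscr L_\alpha u=f$ is established by a standard elliptic argument adapted to the twisted periodicity, the remaining pieces are direct.
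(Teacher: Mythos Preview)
Your proposal is correct and follows essentially the same route as the paper: commutation by direct computation at the level of the first-order operators $P_j^\alpha$, symmetry via integration by parts on a fundamental domain, domain identification through elliptic regularity, compact resolvent by Rellich--Kondrachov, and unitary equivalence by the periodic gauge transformation $e^{i(\alpha-\alpha')\cdot x}$. The only cosmetic difference is that the paper bypasses the Friedrichs extension and directly computes $D(\mathscr{L}_\alpha^*)=\{u\in L^2(\T^2,L):\mathscr{L}_\alpha u\in L^2_{\rm loc}\}$, then invokes elliptic regularity to conclude this equals $H^2(\T^2,L)$; your detour through the quadratic form is equally valid and lands on the same elliptic step.
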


\begin{proof}
The operator $\mathscr{L}_\alpha$ is well defined on $H^2_{\rm{loc}}(\R^2)$. To check the commutation property with the magnetic translations, it is enough to prove that $(-i\partial_{x_j}-A^0_j)$ commutes with $\mathcal{T}_m^B$ (as $\alpha_j+A^{{\rm per}}_j$ is periodic),
\begin{multline*}
\left(-i\partial_{x_1}+\frac{\widehat{B}_0 x_2}{2}\right)\mathcal{T}_m^B(u)(x)
\\=e^{\frac{i\widehat{B}_0}{2}m\wedge x}\left(-i\partial_{x_1}u(x-m)+\frac{\widehat{B}_0}{2}(x_2-m_2)u(x-m)\right)\\
=
\mathcal{T}_m^B\left(-i\partial_{x_1}+\frac{\widehat{B}_0 x_2}{2}\right)(u)(x),
\end{multline*}
and the same holds for $j=2$. In particular, $\mathscr{L}_\alpha$ maps $H^2(\T^2,L)$ on $L^2(\T^2,L)$, and it is thus a well defined unbounded operator on this Hilbert space. By convolution arguments, $\mathscr{C}^\infty(\T^2,L)$ (and thus $H^2(\T^2,L)$) is a dense subspace of $L^2(\T^2,L)$. Moreover, by the Rellich-Kondrachov Theorem (applied on open bounded subsets of $\R^2$), the injection $H^2(\T^2,L)\subset L^2(\T^2,L)$ is compact. It follows that the resolvent $(\mathscr{L}_{\alpha}+i)^{-1}$ is a compact operator. Using integration by parts, we know that $\mathscr{L}_\alpha$ is symmetric, since the boundary terms vanish by the translation property $\mathcal T_m^B u = u$. The adjoint has domain
\[D(\mathscr{L}_\alpha^*) = \lbrace u \in L^2(\T^2,L), \quad \mathscr{L}_\alpha u \in L^2_{\rm{loc}}(\R^2) \rbrace ,\]
and, by elliptic regularity, this is precisely $H^2(\T^2,L)$. Thus $\mathscr{L}_\alpha$ is self-adjoint. To prove the last statement, note that, if $\alpha - \alpha' \in 2\pi \Z^2$, the function $e^{i (\alpha - \alpha') \cdot x}$ is periodic, and we have the conjugation property
\[e^{-i (\alpha - \alpha')  x} \mathscr{L}_{\alpha} e^{i (\alpha - \alpha')  x} = \mathscr{L}_{\alpha'}.\]
\end{proof}

\subsection{The case of a constant magnetic field}

As an example of the operators at hand, we describe in this short paragraph the spectrum of $\mathscr{L}_\alpha$ in the case of a constant field $B = \widehat{B}_0$ (with the gauge $A^{\rm{per}} = 0$). We suppose that $B\neq 0$ as, in the case $B=0$, eigenmodes are given by the standard decomposition in Fourier series. In fact when $B\neq 0$, we can also find explicit formulas for the eigenfunctions and eigenvalues. We emphasize that these formulas will \emph{not} be used in the remaining of the article, as we do not need them to prove Theorem~\ref{t:maintheo-quantitative}. This paragraph is only here for the sake of concreteness.

Firstly, we can conjugate $\mathscr{L}_\alpha$ by a phase to change the gauge,
\[ \mathscr{P}_{\alpha,B} = e^{i ( \frac{B}{2} x_2 - \alpha_1 ) x_1} \mathscr{L}_\alpha e^{-i (\frac{B}{2} x_2 - \alpha_1 ) x_1} = - \partial_{x_1}^2 + (-i \partial_{x_2} - \alpha_2 - Bx_1)^2. \]
Conjugating by such a phase results into a different periodicity condition, meaning that $L^2(\T^2,L)$ has to be modified using this phase to make sense of $\mathscr{P}_{\alpha,B}$ on $\T^2$. In fact, the domain $D( \mathscr{P}_{\alpha,B})$ of $\mathscr{P}_{\alpha,B}$ is now
$$\big\lbrace v \in H^2_{\rm{loc}}(\R^2), \quad v(x) = e^{-i \frac{B}{2} m_1m_2} e^{-i \alpha_1 m_1} e^{iBm_1 x_2} v(x-m), \quad m \in \mathbf Z^2 \big\rbrace,$$
and $\mathscr{P}_{\alpha,B}$ is unitarily equivalent to $\mathscr{L}_\alpha$.
In particular, if $v$ is in the domain, then $x_2 \mapsto v(x_1,x_2)$ is $1$-periodic and can be written as the sum of its Fourier series,
\[ v(x) = \sum_{p \in \mathbf Z} e^{2i\pi px_2} v_p(x_1).\]
The periodicity condition in $x_1$ translates in
\[ v_{p+ \phi} (x_1) = e^{-i\alpha_1} v_p(x_1-1), \quad p\in \mathbf Z, \quad x_1 \in \mathbf R, \]
where $\phi = \frac{B}{2\pi} \in \mathbf Z$. Therefore, any function in the domain of $\mathscr{P}_{\alpha,B}$ is determined by the functions $v_0(x_1), \cdots, v_{\phi-1}(x_1)$, through the formula
\[v(x) = \sum_{n\in \mathbf Z} \sum_{p=0}^{\phi-1} e^{2i\pi ( n \phi + p) x_2} e^{-in \alpha_1} v_p(x_1-n).\]
Now assume $v$ is an eigenfunction with eigenvalue $\lambda$. The eigenvalue equation on the $p$-mode reads
\[ -\partial_{x_1}^2 v_p + (2\pi p - \alpha_1 - B x_1)^2 v_p = \lambda v_p ,\]
which is a translated harmonic oscillator. The eigenvalues are thus $\lambda_j = (2j-1) B$ for $j \geq 1$, with multiplicity $\phi$, and the associated eigenfunctions are given by twisted sums of translated Hermite functions. 

Note that the explicit expression of the eigenfunctions could maybe be used to prove Theorem~\ref{t:maintheo-quantitative} but we did not succeed in doing so. Hence, we will follow a different approach based on the properties of our quantization -- see Section~\ref{s:proof-quantitative}.

\section{A magnetic quantization}\label{s:quantization}

In this section, we introduce a magnetic quantization which is adapted to our problem. The goal is to make sense of pseudo-differential operators acting on spaces of functions with the magnetic periodicity condition, namely $\mathscr{C}^{\infty}(\mathbf T^2,L)$ introduced in \eqref{eq.CL}. Using the complex line bundle point of view, this could be achieved by considering pseudo-differential operators with operator-valued symbols, i.e. functions lying in $\mathscr{C}^{\infty}(\T^2\times\R^2,\operatorname{Hom}(L,L))$. Here, due to the Euclidean structure of our problem, we can proceed in a slightly more explicit manner by considering twisted versions of the standard Weyl quantization~\cite{Zworski}. Such a construction was carried out in a microlocal set-up on $\R^d$ by Mantoiu and Purice~\cite{MantoiuPurice2004}. See also~\cite{BoutetdeMonvelGrigisHelffer1976} for earlier related constructions,~\cite{IftimieMantoiuPurice2007, IftimieMantoiuPurice2019} for further developments regarding magnetic fields and~\cite{HelfferPurice2010} for the semiclassical version. One advantage of this point of view is to ensure that all terms in the asymptotic expansions of standard theorems of pseudo-differential calculus depend only on $B$ (and not on a choice of a local primitive of $B$ if one would have chosen to work in local charts). Here, we will need a semiclassical and \emph{periodic} version of this magnetic pseudo-differential calculus. Hence, we review how the construction in these works can be adapted to our periodic set-up and what are the main properties of these magnetic pseudo-differential operators.

\begin{remark}
Taking $\widehat{B}_0=0$ in this section, the construction below recovers the standard Weyl quantization on the torus.  
 \end{remark}

 \begin{remark} Taking $\widehat{B}_0\in 2\pi\Z$ is a standard assumption when dealing with magnetic Laplacians on compact manifolds -- see e.g.~\cite{Demailly1985}. Yet, it would be interesting to make sense of the questions raised in this article when the magnetic flux $\widehat{B}_0$ is not in $2\pi\Z$. As pointed to us by one of the referee, there is a formal resemblance between the construction below and the noncommutative pseudodifferential calculus described in~\cite{HaLeePonge2019,HaLeePonge2019b} and it could be a way to attack this question.   
  \end{remark}

\subsection{Magnetic Weyl quantization}

The symbols $a(x,\xi)$ of such pseudo-differential operators are periodic with respect to $x$, i.e. $a \in \mathscr{C}^{\infty}(\mathbf T^2 \times \mathbf R^2,\C)$. The construction relies on the magnetic Weyl operators $W_h^{B}(\eta, \zeta)$, defined for $\eta \in 2 \pi \Z^2$ and $\zeta \in \R^2$ by
\begin{equation}
W_h^B(\eta,\zeta) u(x) =e^{  \frac{ih \widehat{B}_0}{2} \zeta \wedge x}  e^{i  \eta \cdot( x+ \frac{h \zeta}{2})}u(x+h\zeta), \qquad u \in L^{2}_{\text{loc}}(\mathbf R^2).
\end{equation}
It should be noted that $W_h^B(\eta,\zeta)$ commutes with the magnetic translations,
\[W_h^B(\eta,\zeta) \circ  \mathcal T_m^B = \mathcal T_m^B \circ W_h^B(\eta,\zeta),\]
which makes it a well defined isometry $W_h^B(\eta,\zeta) : L^{2}(\mathbf T^2,L) \rightarrow L^{2}(\mathbf T^2,L)$. The adjoint is also explicitly given by
\begin{equation}\label{eq.Whstar}
W_h^B(\eta,\zeta)^* = W_h^B(-\eta, - \zeta).
\end{equation}
Finally we have the following composition formula for Weyl operators.

\begin{proposition}\label{prop.compT}
For all $\eta$, $\eta' \in 2 \pi \Z^2$, $\zeta$, $\zeta' \in \R^2$, we have the composition rule
\[ W_h^B(\eta,\zeta) \circ W_h^B(\eta',\zeta') =
e^{i\frac h 2 \Omega_{hB}\left(\eta,\zeta;\eta',\zeta'\right)}W_h^B(\eta +\eta', \zeta+\zeta'),\]
where 
$$
\Omega_{hB}\left(\eta,\zeta;\eta',\zeta'\right):=\eta' \cdot\zeta - \eta\cdot \zeta' +h\widehat{B}_0 \zeta' \wedge \zeta.
$$
\end{proposition}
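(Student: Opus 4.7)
The proof is a direct computation: I would simply apply the two operators in order and compare the resulting phase factors with those of $W_h^B(\eta+\eta',\zeta+\zeta')$. Concretely, I would first compute $v(x) = W_h^B(\eta',\zeta')u(x)$ from the definition, then apply $W_h^B(\eta,\zeta)$ to this $v$, which only involves multiplying by the prescribed phase and translating the argument by $h\zeta$. The argument of $u$ ends up being $x + h(\zeta+\zeta')$, which already matches the argument appearing in $W_h^B(\eta+\eta',\zeta+\zeta')u(x)$, so the entire content of the proposition is the identification of the resulting phase.

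The second step is to collect all phase factors and split each of them into a part linear in $x$ and a constant-in-$x$ remainder. The magnetic phase contributes $e^{\frac{ih\widehat{B}_0}{2}\zeta\wedge x} \cdot e^{\frac{ih\widehat{B}_0}{2}\zeta'\wedge(x+h\zeta)}$, and this is where the new $h^2\widehat{B}_0\,\zeta'\wedge\zeta$ term arises, from expanding the wedge product $\zeta'\wedge(x+h\zeta)$. The plane-wave phase contributes $e^{i\eta\cdot(x+\frac{h\zeta}{2})} \cdot e^{i\eta'\cdot(x+h\zeta+\frac{h\zeta'}{2})}$; collecting the $x$-linear pieces gives $e^{i(\eta+\eta')\cdot x}$, and collecting the remaining constants gives $\tfrac{h}{2}\eta\cdot\zeta + h\eta'\cdot\zeta + \tfrac{h}{2}\eta'\cdot\zeta'$.

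Finally, I would subtract off the phase $e^{\frac{ih}{2}(\eta+\eta')\cdot(\zeta+\zeta')}$ coming from $W_h^B(\eta+\eta',\zeta+\zeta')$ and simplify. The $x$-dependent magnetic and plane-wave phases match $e^{\frac{ih\widehat{B}_0}{2}(\zeta+\zeta')\wedge x}e^{i(\eta+\eta')\cdot x}$ on the nose, and the leftover constant phase reduces after cancellation to $\frac{h}{2}(\eta'\cdot\zeta - \eta\cdot\zeta' + h\widehat{B}_0\zeta'\wedge\zeta)$, which is exactly $\tfrac{h}{2}\Omega_{hB}(\eta,\zeta;\eta',\zeta')$.

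There is no real obstacle here: the argument is purely algebraic bookkeeping, and the only point that deserves some care is the extra factor $e^{\frac{ih^2\widehat{B}_0}{2}\zeta'\wedge\zeta}$ produced by the non-linearity of the magnetic phase under translation, which is precisely the term that distinguishes the magnetic Weyl calculus from the standard one ($\widehat{B}_0=0$) and that encodes the symplectic magnetic form. One could alternatively verify the identity by checking the statement on plane waves and using density, but the direct computation is cleaner.
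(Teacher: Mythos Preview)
Your proposal is correct and follows essentially the same direct computation as the paper's proof: apply the two operators successively, expand the phases, and regroup to recognise the phase of $W_h^B(\eta+\eta',\zeta+\zeta')$ together with the leftover constant $\tfrac{h}{2}\Omega_{hB}$. The only cosmetic difference is that the paper collects the plane-wave phase as $e^{i(\eta+\eta')\cdot(x+\frac{h\zeta}{2}+\frac{h\zeta'}{2})}$ in one step rather than splitting into $x$-linear and constant parts as you do.
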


\begin{proof}
For $u \in L^2(\T^2,L)$, we have
\begin{multline*}
  W_h^B(\eta,\zeta) \circ W_h^B(\eta',\zeta') u(x) \\= e^{i  \eta \cdot( x+ \frac{h \zeta}{2} ) + \frac{ih \widehat{B}_0}{2} \zeta \wedge x} W_h^B(\eta',\zeta') u(x + h \zeta)\\
= e^{i \eta \cdot( x+ \frac{h \zeta}{2} )  + \frac{ih \widehat{B}_0}{2} \zeta \wedge x} e^{i  \eta' \cdot( x+h\zeta + \frac{h \zeta'}{2}) + \frac{ih\widehat{B}_0}{2} \zeta' \wedge (x+h \zeta)} u(x+h\zeta+h\zeta')\\
= e^{i\frac h 2 ( \eta' \cdot\zeta - \eta \cdot\zeta' + h \widehat{B}_0 \zeta' \wedge \zeta )}e^{i (\eta + \eta')\cdot(x+ \frac{h \zeta}{2} + \frac{h \zeta'}{2}) + \frac{ ih \widehat{B}_0}{2}(\zeta + \zeta') \wedge x}u(x+h\zeta+h\zeta') \\
=  e^{i\frac h 2 ( \eta' \cdot\zeta - \eta \cdot\zeta'  + h \widehat{B}_0 \zeta' \wedge \zeta )} W_h^B(\eta +\eta', \zeta+\zeta') u (x).
\end{multline*}
\end{proof}

Given a symbol $a  : \mathbf T^2 \times \mathbf R^2 \to \mathbf C$ which is periodic with respect to the first variable, we formally define the magnetic quantization using the Weyl operators $W_h^B$ by the following formula
\begin{equation} \label{eq.Opa}
 \Op (a)  = \frac{1}{(2\pi)^2} \sum_{\eta \in 2\pi  \Z^2} \int_{\R^2} \mathcal{F}(a)(\eta,\zeta) W_h^B (\eta,\zeta) \dd \zeta, 
\end{equation}
where we denoted by $\mathcal F$ the Fourier transform of the symbol $a(x,\xi)$, namely
\begin{equation}
\mathcal F(a)(\eta,\zeta) = \int_{\T^2 \times \R^2} a(x,\xi) e^{-i \eta\cdot x}e^{-i  \zeta \cdot\xi } \dd x \dd \xi.
\end{equation}
Note that the inverse formula for $\mathcal{F}$ is
\[\mathcal F^{-1}(a)(x,\xi) = \frac{1}{(2\pi)^2} \sum_{\eta \in 2\pi \Z^2} \int_{\R^2} a(\eta,\zeta) e^{i  x\cdot \eta } e^{i \xi\cdot \zeta } \dd \zeta.\]
Later on in the arguments, we shall use the short notation $w=(x,\xi)\in T^*\T^2$. This definition \eqref{eq.Opa} is analog to the standard Weyl quantization, which correspond to the case $\widehat{B}_0=0$. In that case, one can write \eqref{eq.Opa} in terms of the Fourier coefficients of $u$ -- see e.g.~\cite[Appendix]{MaciaRiviere2018}. However when $\widehat{B}_0 \neq 0$, $u$ is not periodic and these techniques no longer make sense. That is a reason why we work instead with the representation \eqref{eq.Opa}. This quantization is the analogue in a periodic setting of the one introduced by Mantoiu and Purice (see \cite{MantoiuPurice2004,IftimieMantoiuPurice2007,HelfferPurice2010, IftimieMantoiuPurice2019} for instance), and we hope that this article opens further potential applications of their formalism to spectral geometry.

\medskip
Before studying this quantization for general symbols, let us give a couple of important examples. First of all, if $a(x,\xi) = \xi_j$ (for $j=1,2$) then we obtain the magnetic derivative 
\begin{equation}\label{eq.opxi}
\Op(\xi_j) = -ih\partial_{x_j} - h A_j^{0},
\end{equation}
where we recall that $A^0(x_1,x_2) = \frac{\widehat{B}_0}{2}(-x_2, x_1)$ is our choice of gauge generating the uniform field of strength $\widehat{B}_0$. Recall that, even if $A^0$ is not periodic, this expression makes sense as an operator on $\mathscr{C}^{\infty}(\T^2,L)$. When $a$ is independent of $\xi$, we obtain the multiplication operator \begin{equation} \Op(a(x)) = a(x), \end{equation} as in the non-magnetic case. More generaly, when $a$ belongs to a suitable class of symbol, $\Op(a)$ defines a continous operator acting on the space $\mathscr{C}^{\infty}(\T^2,L)$ of smooth magnetic-periodic functions, introduced in \eqref{eq.CL}.

\begin{proposition}
Let $m \in \mathbf R$, and assume $a \in \mathscr{S}^m(T^*\T^2)$, where
\[ \mathscr{S}^m(T^*\T^2) = \lbrace a \in \mathscr{C}^{\infty}(\T^2 \times \R^2,\C), \forall \alpha, \beta \in \mathbf N_0^{2}, |\partial_x^{\alpha} \partial_{\xi}^{\beta} a (x,\xi)| \leq C_{\alpha \beta} \langle \xi \rangle^m \rbrace.\]
Then $\Op(a) : \mathscr{C}^{\infty}(\T^2,L) \rightarrow \mathscr{C}^{\infty}(\T^2,L) $ defines a continuous operator. Moreover, the formal adjoint is given by
\begin{equation}\label{eq.adjoint}
\Op(a)^* = \Op( \overline{a}).
\end{equation}
\end{proposition}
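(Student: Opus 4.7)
The plan is to give a rigorous meaning to the formal series/integral \eqref{eq.Opa} by reducing it to an oscillatory integral of Weyl type (in the spirit of Mantoiu--Purice), and then to read off the desired properties. First I would unfold \eqref{eq.Opa} on a test function $u\in\mathscr{C}^\infty(\T^2,L)$: inserting the defining integral of $\mathcal{F}(a)$, performing the change of variables $z=x+h\zeta$, and applying the Poisson summation formula to $\sum_{\eta\in 2\pi\Z^2} e^{i\eta\cdot((x+z)/2-y)}$ collapses the $y$--integration onto the shifts of $(x+z)/2$ in $\T^2$. Using the periodicity of $a$ in its first variable, this leads formally to the representation
\[
\Op(a)u(x) \;=\; \frac{1}{(2\pi h)^2}\iint_{\R^2\times\R^2} e^{\frac{i}{h}(x-z)\cdot\xi}\,e^{-i\int_{[x,z]}A^0}\, a\!\left(\frac{x+z}{2},\xi\right) u(z)\,\dd\xi\,\dd z,
\]
where the magnetic phase reduces to $e^{-i\widehat{B}_0\, x\wedge z/2}$ for our gauge choice.

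For $a\in S^m$ this integral is not absolutely convergent but is interpreted in the standard oscillatory sense: integration by parts in $z$ against the symbol $\langle\xi\rangle^{-2}(1-h^2\Delta_z)$ (legitimate because $u$ is smooth) gains arbitrarily many negative powers of $\langle\xi\rangle$, while integration by parts in $\xi$ using the vector field $\frac{h(x-z)}{i|x-z|^2}\cdot\nabla_\xi$ localizes $z$ close to $x$ modulo $\Z^2$. The resulting seminorm estimates bound any $\mathscr{C}^k$ norm of $\Op(a)u$ on the fundamental domain $[0,1]^2$ by a finite combination of $S^m$--seminorms of $a$ and $\mathscr{C}^{k'}$--seminorms of $u$, which gives the continuity into $\mathscr{C}^\infty(\T^2,L)$. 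Magnetic periodicity of $\Op(a)u$ is inherited from the commutation $W_h^B(\eta,\zeta)\mathcal T_m^B=\mathcal T_m^B W_h^B(\eta,\zeta)$ stated just before \eqref{eq.Whstar} (it can also be checked directly on the oscillatory integral, since $A^0$ is linear and a short calculation shows that replacing $x,z$ by $x-m,z-m$ in the phase introduces exactly the magnetic-translation factor).

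The adjoint identity is the cleanest part: combining \eqref{eq.Whstar} with the elementary Fourier identity $\overline{\mathcal{F}(a)(\eta,\zeta)}=\mathcal{F}(\overline{a})(-\eta,-\zeta)$ and pairing against $u,v\in\mathscr{C}^\infty(\T^2,L)$, a change of variables $(\eta,\zeta)\mapsto(-\eta,-\zeta)$ in the Bochner sum/integral defining $\Op(a)^*$ turns it into $\Op(\overline{a})$. The main obstacle throughout is not algebraic but analytic: ensuring that every interchange of the sum over $\eta$, the $\zeta$--integral, the oscillatory $\xi$--integral and the $L^2$--pairing is legitimate. Once Step~1 provides a representation that is absolutely convergent after finitely many integrations by parts, each rearrangement can be justified on the oscillatory form and then propagated back to the series defining $\Op(a)$.
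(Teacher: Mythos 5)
Your proposal is correct, and the adjoint part coincides with the paper's argument (both rest on \eqref{eq.Whstar} together with $\overline{\mathcal{F}(a)(\eta,\zeta)}=\mathcal{F}(\overline a)(-\eta,-\zeta)$ and the change of variables $(\eta,\zeta)\mapsto(-\eta,-\zeta)$). For the continuity statement, however, you take a genuinely different route. The paper never leaves the representation \eqref{eq.Opa}: it regularizes the sum/integral directly in the Fourier variables $(\eta,\zeta)$, inserting $\langle\eta\rangle^{-\gamma}\langle\zeta\rangle^{-\beta}\langle D_\zeta\rangle^{N}$, moving the derivatives between $\mathcal F(a)$ and $W_h^B(\eta,\zeta)u(x)$, and using that $\zeta$-derivatives of $W_h^B(\eta,\zeta)u(x)$ cost only powers of $\eta$ and $\mathscr C^N$-seminorms of $u$. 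You instead first resum over $\eta$ by Poisson summation to obtain the physical-space Schwartz kernel (your magnetic Weyl formula with phase $e^{\frac{i}{h}(x-z)\cdot\xi}e^{-\frac{i\widehat B_0}{2}x\wedge z}$ is indeed what \eqref{eq.Opa} unfolds to, and the phase correctly equals $e^{-i\int_{[x,z]}A^0}$ for the gauge $A^0$), and then run the standard non-stationary-phase regularization in $(z,\xi)$; one should just keep in mind the usual cutoff near $z=x$ before using the singular vector field $\frac{h(x-z)}{i|x-z|^2}\cdot\nabla_\xi$, and the fact that derivatives of a magnetic-periodic $u$ grow polynomially (which the $|x-z|^{-M}$ gain absorbs). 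Your route has the advantage of exhibiting the Mantoiu--Purice kernel explicitly, which is reusable for kernel estimates; the paper's route is shorter here and stays in the $W_h^B$ formalism that it needs again for the composition formula and the Calder\'on--Vaillancourt theorem (where the Poisson summation step you perform does eventually appear).
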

Observe that, in the following, we will deal symbols $(a_h)_{0<h\leq 1}$ and we will say that $(a_h)_{0<h\leq 1}$ is uniformly in $\mathscr{S}^m(T^*\T^2)$ or that $a_h=\mathscr{O}_{\mathscr{S}^m}(1)$ if all the semi-norms are uniformly bounded in terms of $0<h\leq 1$.
\begin{proof}
Here the magnetic field adds no significant problems, so we can use the usual strategy through oscillatory integral arguments~\cite[\S3.6]{Zworski}. For $u \in \mathscr{C}^{\infty}(\T^2,L)$, we formally integrate by parts in the definition of $\Op(a)$ until we get a convergent integral. These formal calculations correspond to interpreting \eqref{eq.Opa} in distributional sense as in~\cite[Th.~3.18]{Zworski}. Using the Fourier multiplier $\langle D_x \rangle = (1 - \partial^2_{x_1}-\partial^2_{x_2})^{\frac 12}$ and properties of the Fourier transform we have, for $\beta$, $\gamma$, $N \in 2\mathbf N$,
\begin{multline*}
\Op (a) u(x) = \frac{1}{(2\pi)^2} \sum_{\eta \in 2 \pi \mathbf Z^2} \langle \eta \rangle^{- \gamma} \\
\times\int_{\mathbf{R}^2} \langle D_{\zeta} \rangle^{N} \Big[ \langle \zeta \rangle^{- \beta} \mathcal{F}\big( \langle \xi \rangle^{-N}\langle D_y \rangle^{\gamma} \langle D_{\xi}\rangle ^{\beta} a \big) \Big]  W_h^B (\eta,\zeta) u(x) \dd \zeta.
\end{multline*}
When we integrate by part, $D_{\zeta}$ now acts on $W_h^B(\eta,\zeta) u (x)$,
\begin{multline*}
\Op (a) u(x) = \frac{1}{(2\pi)^2} \sum_{\eta \in 2 \pi \mathbf Z^2} \langle \eta \rangle^{- \gamma}\\ 
\times\int_{\mathbf{R}^2}  \langle \zeta \rangle^{- \beta} \mathcal{F}\big( \langle \xi \rangle^{-N}\langle D_y \rangle^{\gamma} \langle D_{\xi}\rangle ^{\beta} a \big)  \langle D_{\zeta} \rangle^{N} \Big[  W_h^B (\eta,\zeta) u(x)  \Big] \dd \zeta.
\end{multline*}
The derivatives with respect to $\zeta$ of the Weyl operator are bounded by seminorms of $u$ and powers of $\eta$,
\begin{multline*}
| \Op (a) u(x) | \leq C_{\gamma,\beta, N} \sum_{\eta \in 2 \pi \mathbf Z^2} \langle \eta \rangle^{- \gamma} \\
\times\int_{\mathbf{R}^2}  \langle \zeta \rangle^{- \beta} \big| \mathcal{F}\big( \langle \xi \rangle^{-N}\langle D_y \rangle^{\gamma} \langle D_{\xi}\rangle ^{\beta} a \big) \big| \langle \eta \rangle^{N} \| u \|_{\mathscr{C}^N} \dd \zeta.
\end{multline*}
If $a \in \mathscr{S}^m$ and $N$ large enough, then the Fourier transform $\mathcal F$ appearing in the previous integral is a bounded function. Therefore,
\begin{align*}
\big| \Op (a) u(x) \big| \leq C_{\gamma,\beta, N} \sum_{\eta \in 2 \pi \mathbf Z^2} \langle \eta \rangle^{ N - \gamma} \int_{\mathbf{R}^2}  \langle \zeta \rangle^{- \beta}  \dd \zeta \,  \| u \|_{\mathscr{C}^N},
\end{align*}
and the above sum and integral are convergent for $\gamma$ and $\beta$ large enough. Similarly, we obtain bounds on the derivatives of $\Op (a) u$, and deduce that $\Op (a)$ defines a continuous operator on $\mathscr{C}^{\infty}(\T^2,L)$. The formula for the adjoint follows from the definition \eqref{eq.Opa} and property \eqref{eq.Whstar}.
\end{proof}

\subsection{Composition formula}

We can now turn to the first main property of this quantization procedure.

\begin{theorem}\label{thm.comp}
Let $m_1$, $m_2 \in \mathbf R$ and assume $a \in \mathscr{S}^{m_1}$, $b \in \mathscr{S}^{m_2}$. Then,
\begin{enumerate}
\item There exists a $h$-dependent symbol $a \star b \in \mathscr{S}^{m_1 + m_2}$ such that
\[\Op(a) \circ \Op(b) = \Op(a \star b).\]
\item The symbol $a \star b$ is explicitly given by the oscillatory integral
\begin{align*}
a \star b(x,\xi) &= \sum_{\eta, \eta' \in 2 \pi \mathbf Z^2} \int_{\mathbf T^4} e^{i (\eta y + \eta' y' +2 \widehat{B}_0 y \wedge y')}a\Big(x - y, \xi + \frac{h \eta'}{2} + h \widehat{B}_0y^{\perp} \Big) \\ &\qquad \qquad \times  b \Big( x - y', \xi - \frac{h \eta}{2} + h \widehat{B}_0 y'^{\perp} \Big) \dd y \dd y',
\end{align*}
where $y^\perp=(-y_2,y_1).$
\item For all $N \geq 1$ we have the asymptotic expansion
\begin{multline*}
a \star b (x, \xi) = \sum_{n = 0}^{N-1}\frac{1}{n!} \Big( \frac{ih}{2} \Big)^n  \Omega_{hB}( D_x, D_\xi, D_{x'}, D_{\xi'})^n a(x,\xi) b(x',\xi') |_{(x,\xi) = (x',\xi')} \\
+ \mathscr{O}_{\mathscr{S}^{m_1 + m_2}}(h^N),
\end{multline*}
where all the seminorms in the remainder are bounded in terms of a finite number (depending only on the seminorm) of seminorms of $a$ and $b$.
\item In particular,
\[ a \star b = ab + \mathscr{O}_{\mathscr{S}^{m_1 +m_2}}(h), \]
and
\begin{equation}\label{eq.commutator}
 a \star b - b \star a = \frac{h}{i} \lbrace a,b \rbrace + \frac{\widehat{B}_0 h^2}{i} \big( \partial_{\xi_2} a \partial_{\xi_1} b - \partial_{\xi_1} a \partial_{\xi_2}b \big) + \mathscr{O}_{\mathscr{S}^{m_1 +m_2}}(h^3),
\end{equation}
where all the seminorms in the remainder are bounded in terms of a finite number (depending only on the seminorm) of seminorms of $a$ and $b$.
\item If $a(\xi)$ is a polynomial in $\xi$ of degree $\leq N$, then
$$
a \star b (x, \xi) = \sum_{n = 0}^{N} \frac{1}{n!}\Big( \frac{ih}{2} \Big)^n  \Omega_{hB}( D_x, D_\xi, D_{x'}, D_{\xi'})^n a(x,\xi) b(x',\xi') |_{(x,\xi) = (x',\xi')} 
$$
The same holds if $a(x,\xi)$ and $b(x,\xi)$ are both polynomial in $\xi$ with total degree $\leq N$.
\end{enumerate}
\end{theorem}

\begin{proof}
Using the composition rule for Weyl operators (Proposition \ref{prop.compT}), we find
\begin{multline*}
\Op(a) \circ \Op(b) = \frac{1}{(2\pi)^4} \sum_{\eta, \eta' \in  2\pi\Z^2} \int_{\R^4} e^{i\frac h 2 ( \eta' \cdot\zeta  - \eta \cdot\zeta')} e^{\frac{i h^2}{2} \widehat{B}_0 \zeta' \wedge \zeta } \\  \times \mathcal{F}(a)(\eta, \zeta) \mathcal{F}(b)(\eta',\zeta')  W_h^B(\eta +\eta', \zeta+\zeta') \dd \zeta \dd \zeta'.
\end{multline*}
We translate the variables $\eta$ and $\zeta$ to get
\begin{multline*}
\Op(a) \circ \Op(b) = \frac{1}{(2\pi)^4} \sum_{\eta, \eta' \in  2\pi\Z^2} \int_{\R^4}   e^{i\frac h 2 (  \eta' \cdot( \zeta - \zeta' ) - ( \eta-\eta' ) \cdot\zeta' )} e^{\frac{i h^2}{2} \widehat{B}_0 \zeta' \wedge \zeta } \\ \times \mathcal{F}(a)(\eta -\eta', \zeta-\zeta') \mathcal{F}(b)(\eta',\zeta') W_h^B(\eta , \zeta) \dd \zeta \dd \zeta'.
\end{multline*}
Thus $\Op(a) \circ \Op(b) = \Op(a \star b)$ with
\begin{align}\label{eq.Fab}
\mathcal{F} (a \star b) (\eta,\zeta) = \frac{1}{(2\pi)^2} \sum_{\eta' \in  2\pi\Z^2} \int_{\R^2} &e^{i\frac h 2 (  \eta' \cdot( \zeta - \zeta' ) - ( \eta-\eta' )\cdot \zeta' )} e^{\frac{i h^2}{2} \widehat{B}_0 \zeta' \wedge \zeta } \\ & \quad \times \mathcal{F}(a)(\eta- \eta', \zeta-\zeta') \mathcal{F}(b)(\eta',\zeta') \dd \zeta'. \nonumber
\end{align}
Inverting the Fourier transform, we find
\begin{align*}
a \star b (x,\xi) = \frac{1}{(2\pi)^4} \sum_{\eta,\eta' \in 2\pi \Z^2} \int_{\R^4} &e^{i \eta\cdot x} e^{i \zeta\cdot \xi} e^{i\frac h 2 (  \eta' \cdot( \zeta - \zeta' ) - ( \eta-\eta' ) \cdot\zeta' )} e^{\frac{i h^2}{2} \widehat{B}_0 \zeta' \wedge \zeta}\\ & \quad \times \mathcal{F}(a)(\eta- \eta', \zeta-\zeta') \mathcal{F}(b)(\eta',\zeta') \dd \zeta \dd \zeta'. 
\end{align*}
We translate $\eta$ and $\zeta$ back to obtain
\begin{align*}
a \star b (x,\xi) = \frac{1}{(2\pi)^4} \sum_{\eta,\eta' \in 2\pi \Z^2} \int_{\R^4} &e^{i (\eta + \eta')\cdot x} e^{i (\zeta +\zeta') \cdot\xi} e^{i\frac h 2 (  \eta'\cdot \zeta - \eta \cdot\zeta' )} e^{\frac{i h^2}{2} \widehat{B}_0 \zeta' \wedge \zeta}\\ & \quad \times \mathcal{F}(a)(\eta, \zeta) \mathcal{F}(b)(\eta',\zeta') \dd \zeta \dd \zeta'.
\end{align*}
We recall the notation $\Omega_{hB}(\eta,\zeta;\eta' , \zeta') = \eta' \zeta - \eta \zeta' + h \widehat{B}_0 \zeta' \wedge \zeta$, and we can insert the definition of the Fourier transform for $a$ and $b$,
\begin{align*}
a \star b (x,\xi) &= \sum_{\eta,\eta' \in 2\pi \Z^2} \int_{\R^4}\int_{(\mathbf T^2 \times \mathbf R^2 )^2} e^{i (x,\xi)\cdot(\eta, \zeta)} e^{i (x,\xi)\cdot(\eta', \zeta') } e^{i\frac h 2\Omega_{hB}(\eta,\zeta,\eta' , \zeta')}  \\ & \qquad \qquad \qquad \qquad \times a(w) b(w') e^{-i  w \cdot(\eta,\zeta) }e^{-i  w'\cdot(\eta', \zeta')} \frac{ \dd w \dd w' \dd \zeta \dd \zeta'}{(2\pi)^4}.
\end{align*}
Now we change $w$ and $w'$ by $z-w$ and $z-w'$ with $z=(x,\xi)$ to obtain
\begin{align} \nonumber
a \star b (z) &= \frac{1}{(2\pi)^4} \sum_{\eta,\eta' \in 2\pi \Z^2} \int_{\R^4}\int_{(\mathbf T^2 \times \mathbf R^2 )^2}  e^{i\frac h 2\Omega_{hB}(\eta,\zeta;\eta' , \zeta')} e^{i w \cdot (\eta,\zeta)}e^{i w'\cdot (\eta', \zeta')}  \\ & \qquad \qquad \qquad \qquad \times a(z-w) b(z-w')  \dd w \dd w' \dd \zeta \dd \zeta' \nonumber \\
&= \int_{(\mathbf T^2 \times \mathbf R^2 )^2} G_h(w,w') a(z-w) b(z-w') \dd w \dd w', \label{eq.ab0}
\end{align}
where
\begin{align*}
G_h(w,w') =  \frac{1}{(2\pi)^4} \sum_{\eta,\eta' \in 2\pi \Z^2} \int_{\R^4} e^{i\frac h 2\Omega_{hB}(\eta,\zeta;\eta' , \zeta')} e^{i w \cdot (\eta,\zeta)}e^{i w'\cdot (\eta', \zeta')}\dd \zeta \dd \zeta',
\end{align*}
is the inverse Fourier transform of $\exp (i \frac h 2 \Omega_{hB})$. We can compute $G_h$ explicitly as a sum of $\delta$ measures. Indeed, using the notations $w=(y,\theta)$ and $w'=(y',\theta')$, we have
\begin{align*}
G_h(w,w') =  \sum_{\eta, \eta' \in 2\pi \mathbf Z^2} \int_{\mathbf R^4} e^{\frac{i h^2}{2} \widehat{B}_0 \zeta' \wedge \zeta }e^{\frac{ih}{2}(\eta'\cdot \zeta - \eta \cdot\zeta')} e^{i (y\cdot \eta + \theta \cdot\zeta + y'\cdot \eta' + \theta' \cdot\zeta') }\frac{ \dd \zeta \dd \zeta'}{(2\pi)^4}. 
\end{align*}
Recall now that the Dirac distribution $\delta$ on $\R^d$ verifies $\delta(\lambda x)=\lambda^{-d}\delta(x)$ and that one has the Poisson formula
$$
\sum_{m\in\Z^d}\delta(x-m)=\sum_{\eta\in 2\pi \Z^d}e^{i\eta\cdot x}\ \Rightarrow\ \forall\lambda>0,\ \sum_{m\in\Z^d}\delta\left(x-\frac{m}{\lambda}\right)=\lambda^d\sum_{\eta\in 2\pi \Z^d}e^{i\eta\cdot \lambda x}.
$$
Implementing this formula (for $d=4$) in the above expression of $G_h$, one finds
\begin{align*}
G_h(w,w') = \frac{1}{(\pi h)^4} \sum_{m,m' \in \mathbf Z^2} e^{2i \widehat{B}_0 (y' + m')\wedge (y + m)} e^{\frac{2i}{h} \theta' (y+m)} e^{-\frac{2i}{h} \theta (y'+m')}.
\end{align*}
Since $\widehat{B}_0 m' \wedge m \in 2 \pi\mathbf Z$, we deduce
\begin{multline*}
G_h(w,w') = \frac{1}{(\pi h)^4} e^{2i \widehat{B}_0 y' \wedge y } e^{\frac{2i}{h} ( \theta' y - \theta y') }
\\
\times\sum_{m \in \mathbf Z^2}  e^{im \big( \frac{2}{h} \theta' + 2 \widehat{B}_0 y'^{\perp} \big)} \sum_{m' \in \mathbf Z^2} e^{im' \big(- \frac{2}{h} \theta - 2 \widehat{B}_0 y^{\perp} \big)} ,
\end{multline*}
where $y^\perp = (-y_2,y_1)$ is such that $y \wedge m = m \cdot y^{\perp}$. Using the Poisson formula again we find
\begin{multline} \label{eq.Gh}
G_h(w,w') = e^{-2i \widehat{B}_0 y' \wedge y} \sum_{\eta \in 2\pi \mathbf Z^2} e^{i\eta y} \delta_{\lbrace \theta' = -h \widehat{B}_0 y'^{\perp} + \frac{h \eta}{2} \rbrace} \\
\times\sum_{\eta' \in 2\pi \mathbf Z^2} e^{i\eta' y'} \delta_{\lbrace \theta = -h \widehat{B}_0 y^{\perp} - \frac{h \eta'}{2} \rbrace}.
\end{multline}
We insert this in \eqref{eq.ab0} to find the product formula,
\begin{align*}
a \star b(x,\xi) &=  \sum_{\eta, \eta' \in 2 \pi \mathbf Z^2} \int_{\mathbf T^4} e^{i (\eta y + \eta' y' +2 \widehat{B}_0 y \wedge y')}  a\Big(x - y, \xi + \frac{h \eta'}{2} + h \widehat{B}_0 y^{\perp}\Big)
\\ &\qquad \qquad \times b \Big( x - y', \xi - \frac{h \eta}{2} + h \widehat{B}_0 y'^{\perp} \Big) \dd y \dd y'.
\end{align*}
Let us now argue that $a \star b$ belongs to the class $\mathscr{S}^{m_1 + m_2}$. We use the Fourier multiplier $\langle D_y \rangle$, integrate by part, and use that $a \in \mathscr{S}^{m_1}$ and $b \in \mathscr{S}^{m_2}$,
\begin{align*}
| a \star b(x,\xi)| &= \Big| \sum_{\eta, \eta' \in 2 \pi \mathbf Z^2} \int_{\mathbf T^4} \langle \eta \rangle^{-N}\langle \eta' \rangle^{-N}\langle D_y \rangle^N \langle D_{y'} \rangle^N \Big( e^{i (\eta y + \eta' y')} \Big)\\ &\times e^{2i \widehat{B}_0 y \wedge y'} a\big(x - y, \xi + \frac{h \eta'}{2} + h \widehat{B}_0 y^\perp \big) b \big( x - y', \xi - \frac{h \eta}{2} + h \widehat{B}_0 y'^{\perp} \big) \dd y \dd y' \Big| \\
&= \Big|\sum_{\eta, \eta' \in 2 \pi \mathbf Z^2} \langle \eta \rangle^{-N}\langle \eta' \rangle^{-N} \int_{\mathbf T^4}   e^{i (\eta y + \eta' y')}  \langle D_y \rangle^N \langle D_{y'} \rangle^N \Big( e^{2i \widehat{B}_0 y \wedge y'} \\ &\times a\big(x - y, \xi + \frac{h \eta'}{2} +h \widehat{B}_0 y^{\perp} \big) b \big( x - y', \xi - \frac{h \eta}{2} + h \widehat{B}_0 y'^{\perp} \big) \Big) \dd y \dd y' \Big| \\
& \leq C \sum_{\eta, \eta' \in 2 \pi \mathbf Z^2} \langle \eta \rangle^{-N}\langle \eta' \rangle^{-N}\\
&´\times\int_{\mathbf T^4} \langle \xi + \frac{h \eta'}{2} + h \widehat{B}_0 y^{\perp} \rangle^{m_1} \langle \xi - \frac{h \eta}{2} + h \widehat{B}_0y'^{\perp} \rangle^{m_2} \dd y \dd y'\\
& \leq C \langle \xi \rangle^{m_1 + m_2},
\end{align*}
for $N$ is large enough. We can use the same argument to bound derivatives of $a\star b$ and we deduce that $a \star b \in \mathscr{S}^{m_1+m_2}$. Here, all the seminorms of $a\star b$ are bilinearly bounded by a finite number (independent of the choice of $a$ and $b$) of semi-norms of $a$ and $b$. To obtain the expansion of $a \star b$, we start from \eqref{eq.Fab}, and use a Taylor expansion at order $N$ of the exponential:
\begin{align*}
\mathcal{F} (a \star b) (\eta,\zeta) &= \frac{1}{(2\pi)^2} \sum_{\eta' \in  2\pi\Z^2} \int_{\R^2} e^{i\frac h 2\Omega_{hB}( \eta - \eta', \zeta - \zeta', \eta', \zeta')} \\
&\qquad\qquad\times\mathcal{F}(a)(\eta- \eta', \zeta-\zeta') \mathcal{F}(b)(\eta',\zeta') \dd \zeta' \\
&= \frac{1}{(2\pi)^2} \sum_{n = 0}^{N-1} \frac{1}{n!}\Big( \frac{ih}{2} \Big)^n  \sum_{\eta' \in  2\pi\Z^2} \int_{\R^2} \Omega_{hB}( \eta - \eta', \zeta - \zeta', \eta', \zeta')^n\\ &\qquad \qquad \qquad \times \mathcal{F}(a)(\eta- \eta', \zeta-\zeta') \mathcal{F}(b)(\eta',\zeta') \dd \zeta' + h^N \mathcal F (\mathcal R_N ).
\end{align*}
We recognize the Fourier multiplier
\begin{multline*}
\mathcal{F} (a \star b) (\eta,\zeta) =\\ \sum_{n = 0}^{N-1}\frac{1}{n!} \Big( \frac{ih}{2} \Big)^n \mathcal F \Big( \Omega_{hB}( D_x, D_\xi, D_{x'}, D_{\xi'})^n a(x,\xi) b(x',\xi') |_{(x,\xi) = (x',\xi')} \Big) \\ + h^N \mathcal{F}(\mathcal R_N).
\end{multline*}
We can use the inverse Fourier transform and control the remainder term $\mathcal R_N$ in the same way as above to deduce
\begin{multline*}
a \star b (x, \xi) = \sum_{n = 0}^{N-1} \frac{1}{n!}\Big( \frac{ih}{2} \Big)^n  \Omega_{hB}( D_x, D_\xi, D_{x'}, D_{\xi'})^n a(x,\xi) b(x',\xi') |_{(x,\xi) = (x',\xi')} \\
+ \mathscr{O}_{\mathscr{S}^{m_1 + m_2}}(h^N).
\end{multline*}
Observe that the remainder can be expressed in terms of a finite number of derivatives of $a$ and $b$ of order at least $N$ so that the involved seminorms in the remainder are bounded in terms of the seminorms of $a$ and $b$.
Note also that if $a(\xi)$ is a polynomial in $\xi$ of degree $\leq N-1$, then the remainder coming from the Taylor formula cancels out so that there is no remainder term. The same holds if both $a(x,\xi)$ and $b(x,\xi)$ are polynomial in $\xi$ of total degree $\leq N-1$. Moreover, the terms $n=1$ and $n=2$ are of the form
\begin{equation*}
(n=1) \qquad \frac{ih}{2} \big( \partial_x a \partial_\xi b - \partial_\xi a \partial_x b \big) + \frac{ih^2}{2}  \widehat{B}_0 \big( \partial_{\xi_1} a \partial_{\xi_2} b - \partial_{\xi_2} a \partial_{\xi_1} b \big),
\end{equation*}
\begin{equation*}
(n=2) \qquad -\frac{h^2}{8} \big( \partial_{\xi}^2 a \partial_x^2 b + \partial_x^2 a \partial_\xi^2 b - 2 \partial_x \partial_\xi a \partial_x \partial_\xi b\big) + \mathscr{O}_{\mathscr{S}^{m_1+m_2}}(h^3),
\end{equation*}
and the theorem follows. Note that, for $n=2$, the leading term is the same for $a\star b$ and $b\star a$.
\end{proof}

\subsection{Application to the magnetic Laplacian}

From the basic properties of the magnetic quantization, we see that
\[ -ih\partial_{x_j} - h \alpha_j - h A_j = \Op \big( \xi_j - h \alpha_j - h A^{\rm{per}}_j \big),\]
for $j=1,2$, and where $A=A^0 + A^{\rm{per}}$ is given by Lemma \ref{l:potential}.
Using the composition formula (Theorem \ref{thm.comp}) we deduce that $h^2 \mathscr{L}_\alpha$ is given by
\begin{align}
h^2\mathscr{L}_\alpha &=\sum_{j=1}^2\left(\Op\left(\xi_j-h\alpha_j-hA_j^{\rm per}\right)\right)^2 \\
&=\Op\left(|\xi - h \alpha - h A^{\rm{per}} |^2 \right). \nonumber
\end{align}
Here, we used that the symbols are linear in the $\xi$-variable so that the last item of Theorem~\ref{thm.comp} applies. In particular, for $j=1,2$, one has  
$$\left(\Op\left(\xi_j-h\alpha_j-hA_j^{\rm per}\right)\right)^2=\Op\left(\left(\xi_j-h\alpha_j-hA_j^{\rm per}\right)^2\right).
$$
Therefore, we can expan the symbol in powers of $h$ and we find
\begin{equation}\label{e:symbol-laplacian}
h^2\mathscr{L}_\alpha =\Op\left(|\xi |^2+hc_1(x)\cdot\xi+h^2c_0(x)\right),
\end{equation}
where $c_0$ and $c_1$ belong to $\mathscr{C}^\infty(\T^2,\R)$ and $\mathscr{C}^\infty(\T^2,\C^2)$ respectively, with explicit formulas:
$$
c_0(x)=\left|\alpha+A^{\rm per}(x)\right|^2\ \text{and}\ c_1(x)=-2(\alpha+A^{\rm per}(x)).
$$
In view of proving Theorem~\ref{t:maintheo-quantitative}, it is also worth collecting the following consequence of the composition formula.
\begin{corollary}\label{coro.comp.quadratic} Let $a\in \mathscr{S}^m$ and $\alpha\in\R^2$. Then, one has
\begin{multline*}
\Big[\Op\left(|\xi-h\alpha|^2\right),\Op(a)\Big] \\=\Op \Big(\frac{2h}{i}(\xi-h\alpha)\cdot\partial_xa-\frac{2h^2\widehat{B}_0}{i}(\xi-h\alpha)^\perp\cdot\partial_\xi a\Big).
\end{multline*}
\end{corollary}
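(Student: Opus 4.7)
The plan is to apply the composition formula in item~(3) of Theorem~\ref{thm.comp} with $b(\xi):=|\xi-h\alpha|^2$. The crucial observation is that $b$ is a polynomial of degree two in $\xi$ with no $x$-dependence: $\partial_x b\equiv 0$ and $\partial_\xi^\gamma b\equiv 0$ for every $|\gamma|\geq 3$. Each application of the bi-differential operator $\Omega_{hB}(D_x,D_\xi,D_{x'},D_{\xi'})$ places exactly one derivative in the $(x',\xi')$-slot; iterating it $n$ times on $a(x,\xi)b(x',\xi')$ therefore produces $n$ derivatives on $b$, which vanish identically as soon as $n\geq 3$. Hence the expansion of $a\star b$ is \emph{exact} and truncates at $n=2$ (no $\mathscr{O}(h^N)$ remainder), and the same conclusion holds for $b\star a$.

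The main step is to show that the $n=2$ contribution cancels in $a\star b - b\star a$. The clean way I would proceed is to exploit the antisymmetry $\sigma^{*}\Omega_{hB}=-\Omega_{hB}$ under the swap $\sigma\colon(x,\xi,x',\xi')\mapsto(x',\xi',x,\xi)$, which is immediate from the defining expression $\Omega_{hB}(\eta,\zeta;\eta',\zeta')=\eta'\cdot\zeta-\eta\cdot\zeta'+h\widehat{B}_0\,\zeta'\wedge\zeta$ (each of the three summands flips sign under exchange of primed and unprimed arguments). Since $\sigma$ fixes the diagonal $\{(x,\xi)=(x',\xi')\}$ pointwise and $\sigma^{*}[a(x,\xi)b(x',\xi')]=b(x,\xi)a(x',\xi')$, it follows that
\[
\Omega_{hB}^{n}[a(x,\xi)b(x',\xi')]\big|_{(x,\xi)=(x',\xi')}=(-1)^n\,\Omega_{hB}^{n}[b(x,\xi)a(x',\xi')]\big|_{(x,\xi)=(x',\xi')}.
\]
Combined with the truncation at $n\leq 2$, only the $n=1$ term survives in $a\star b - b\star a$, and the identity~\eqref{eq.commutator} holds exactly (no $\mathscr{O}(h^3)$ remainder) for this particular $b$.

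The final step is a routine substitution into~\eqref{eq.commutator}. From $\partial_x b=0$ and $\partial_{\xi_j}b=2(\xi_j-h\alpha_j)$ one reads off $\{a,b\}=-2(\xi-h\alpha)\cdot\partial_x a$ and $\partial_{\xi_2}a\,\partial_{\xi_1}b-\partial_{\xi_1}a\,\partial_{\xi_2}b=2(\xi-h\alpha)^{\perp}\cdot\partial_\xi a$. Inserting these and changing sign to pass from $a\star b-b\star a$ to $b\star a-a\star b$, which is the symbol of $[\Op(b),\Op(a)]$, yields precisely the stated formula. The only non-routine point in the argument is the cancellation at $n=2$; the antisymmetry observation above makes it painless and avoids an otherwise tedious direct expansion of $\Omega_{hB}^{2}[a(x,\xi)b(x',\xi')]|_{\mathrm{diag}}$.
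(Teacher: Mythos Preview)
Your proof is correct and follows the same approach as the paper, which does not give a formal proof but only remarks that the formula is exact ``due to the fact that $|\xi-h\alpha|^2$ is quadratic in $\xi$ and independent of $x$, which implies that all the higher order terms in the Taylor formula used to prove Theorem~\ref{thm.comp} cancel.'' Your write-up fills in the details the paper leaves implicit, and your antisymmetry argument for the $n=2$ cancellation is a clean alternative to the paper's direct inspection of the explicit $n=2$ term displayed in the proof of Theorem~\ref{thm.comp}. One small point: your sentence ``the expansion of $a\star b$ is \emph{exact} and truncates at $n=2$'' asserts that the $\mathscr{O}(h^N)$ remainder in Theorem~\ref{thm.comp}(3) is literally zero, not merely $\mathscr{O}(h^\infty)$; your stated justification (that $\Omega_{hB}^n$ annihilates $a\otimes b$ for $n\geq 3$) covers the displayed terms of the sum but not, on its face, the integral Taylor remainder. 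This is easily fixed by noting that in the proof of Theorem~\ref{thm.comp} one has $\Omega_{hB}(\eta-\eta',\zeta-\zeta';\eta',\zeta')=\Omega_{hB}(\eta,\zeta;\eta',\zeta')$, which is \emph{linear} in $(\eta',\zeta')$; hence the integral remainder carries an overall factor $\Omega_{hB}^N$ that vanishes to order $N$ at $(\eta',\zeta')=0$, while $\mathcal{F}(b)$ is a distribution of order at most $2$ supported there. The paper is no more explicit on this point than you are.
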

The important observation here is that this is an exact formula. In particular, there is no remainder term (even $\mathscr{O}(h^\infty)$). This is due to the fact that $|\xi-h\alpha|^2$ is quadratic in $\xi$ and independent of $x$, which implies that the last item of Theorem~\ref{thm.comp} applies. 

\medskip
For later purpose, it is useful to compute the flow induced by the vector field appearing in this formula, namely
$$
\mathbf{X}_h(x,\xi):=2(\xi-h\alpha)\cdot\partial_x-2h\widehat{B}_0(\xi-h\alpha)^\perp\cdot\partial_\xi.
$$
An explicit computation shows that the induced flow is given by
\begin{equation}\label{e:magnetic-periodicflow}
\Phi_h^t\left(x,\xi\right)=\left(x+\frac{1}{h\widehat{B}_0}\left(e^{2t h\widehat{B}_0 J}-\text{Id}\right)(\xi-h\alpha)^{\perp},h\alpha+e^{2th \widehat{B}_0 J}(\xi-h\alpha)\right),
\end{equation}
where $J:=\left(\begin{array}{cc} 0 & 1\\ -1 &0\end{array}\right)$. From this expression, one can remark on the one hand that 
$$\lim_{h\rightarrow 0}\Phi_h^t(x,\xi)=(x+2t\xi,\xi)=\varphi^{2t}(x,\xi)$$ 
is the standard geodesic flow (up to reparametrization of time by a factor $2$). On the other hand, the flow $\Phi_h^t$ is $\pi/(h\widehat{B}_0)$-periodic which is somehow a manifestation of the cyclotron motion at work in magnetic fields. In order to understand this flow, one can first observe that $|\xi-h\alpha|$ is preserved under the flow $\Phi_h^t$ so that we can set $(\cos\theta,\sin\theta)=\xi-h\alpha$ as a new variable lying on a circle radius $1$ and the differential equation becomes 
$$
(x',\theta')=2(\cos\theta,\sin\theta,-2h\widehat{B}_0).
$$
It can be integrated as $$\theta=\theta_0-2th\widehat{B}_0\ \text{mod}\  2\pi,
$$
and
$$
\begin{cases}
x_1(t)=x_1(0)+2\int_0^{t}\cos(\theta_0-2\tau h\widehat{B}_0)d\tau,\ \\ x_2(t)=x_2(0)+2\int_0^{t}\sin(\theta_0-2\tau h\widehat{B}_0)d\tau.
\end{cases}
$$

\subsection{Calder\'on-Vaillancourt Theorem}

In view of applying this composition formula in concrete situations, one needs to ensure the boundedness of pseudo-differential operators on $L^2$ spaces. This is the content of the Calder\'on-Vaillancourt Theorem which can be stated as follows.

\begin{theorem}\label{thm.cv}
If $a \in \mathscr{S}^0$ then $\Op(a)$ is a bounded operator on $L^2(\T^2,L)$ and
for $h \in (0,1]$, one has
$$
\left\|\mathsf{Op}_h^B(a)\right\|_{L^2\rightarrow L^2}\leq C_0\sum_{|\gamma|+|\beta|\leq N_0}h^{|\beta|}\|\partial_x^\gamma\partial_\xi^\beta a\|_{\infty}.
$$
\end{theorem}

\begin{remark}\label{r:rescale}
 It is useful to observe that
\begin{equation}\label{e:change-semiclassical-parameter}
 \Op(a(x,\delta\xi))= \mathsf{Op}_{h\delta}^B(a(x,\xi)).
\end{equation}
In particular, we can deduce from the upper bound
$$
\left\|\mathsf{Op}_1^B(a)\right\|_{L^2\rightarrow L^2}\leq C_0\sum_{|\gamma|+|\beta|\leq N_0}\|\partial_x^\gamma\partial_\xi^\beta a\|_{\infty}
$$
that
$$
\left\|\mathsf{Op}_h^B(a)\right\|_{L^2\rightarrow L^2}\leq C_0\sum_{|\gamma|+|\beta|\leq N_0}h^{|\beta|}\|\partial_x^\gamma\partial_\xi^\beta a\|_{\infty},
$$
which will be useful in view of picking symbols oscillating along the $\xi$-variable.
\end{remark}

\begin{proof}
We follow the strategy from the proof of the Calder\'on-Vaillancourt Theorem given in \cite[Theorem 4.23]{Zworski}. Let $\chi$ be a smooth compactly supported cutoff function such that $0 \leq \chi \leq 1$ and
\begin{equation*}
\sum_{k \in \mathbf Z^2} \chi_k(\xi) =1, \qquad \chi_k(\xi) = \chi(\xi-k), \qquad \forall \xi \in \R^2.
\end{equation*}
Then we decompose $a$ as a sum of compactly supported symbols,
\begin{equation*}
a = \sum_{k \in \mathbf Z^2} a_k, \qquad a_k(x,\xi) = a(x,\xi) \chi_k(\xi).
\end{equation*}
For $k$, $\ell \in \mathbf Z^2$ we introduce the symbol $b_{k \ell} = \overline{a}_k \star a_\ell$. Let us first explain why the operators $\Opu (b_{k \ell})$ are bounded in $L^2$, with uniform bounds of the form
\begin{equation}
\| \Opu( b_{k \ell}) \|_{L^2 \to L^2} \leq C_N \langle k - \ell \rangle^{-N},
\end{equation}
for all $N > 0$. Using the composition formula in Theorem \ref{thm.comp}, and using the Fourier multipliers $\langle D_y \rangle$, $\langle D_{y'}\rangle$ we have
\begin{align*}
b_{k\ell}(x,\xi) &=  \int_{\mathbf T^4} \sum_{\eta, \eta' \in 2 \pi \mathbf Z^2} \langle \eta \rangle^{-M} \langle \eta' \rangle^{-M} e^{i (\eta y + \eta' y')} \langle D_y \rangle^M \langle D_{y'} \rangle^M \Big( e^{ 2i \widehat{B}_0 y \wedge y'} \\ &\qquad \qquad \times \overline a_k\big(x - y, \xi + \frac{ \eta'}{2} + \widehat{B}_0 y^{\perp} \big) a_\ell \big( x - y', \xi - \frac{ \eta}{2} + \widehat{B}_0 y'^{\perp} \big) \Big) \dd y \dd y'.
\end{align*}
We choose the fundamental domain $(0,1)^2$ to integrate, and bound the symbol as
\begin{align*}
|b_{k\ell}(x,\xi)| &\leq \sum_{\eta, \eta'} \langle \eta \rangle^{-M} \langle \eta' \rangle^{-M} \int_{(0,1)^4} \big| \langle D_y \rangle^M \langle D_{y'} \rangle^M \Big( e^{2i \widehat{B}_0 y \wedge y'} \\ &\qquad \qquad \times \overline a_k\big(x - y, \xi + \frac{ \eta'}{2} + \widehat{B}_0 y^{\perp} \big) a_\ell \big( x - y', \xi - \frac{ \eta}{2} +\widehat{B}_0 y'^{\perp} \big) \Big) \big|\dd y \dd y'.
\end{align*}
Due to the support of $a_k$ and $a_\ell$, the integrand is vanishing unless
\begin{align*}
| \xi + \frac{ \eta'}{2} + \widehat{B}_0 y^{\perp} - k | \leq C, \quad \text{and} \quad | \xi - \frac{ \eta}{2} +  \widehat{B}_0 y'^{\perp} - \ell | \leq C,
\end{align*}
which implies
\begin{equation*}
\langle k - \ell \rangle \leq C \langle \eta \rangle + C \langle \eta' \rangle, \quad \text{and} \quad \langle \xi - \frac{k + \ell}{2} \rangle \leq C \langle \eta \rangle+ C \langle \eta' \rangle,
\end{equation*}
where the constant $C$ may depend on $\widehat{B}_0$.  Therefore,
\begin{align*}
|b_{k\ell}(x,\xi)|  \leq C \sum_{\eta, \eta'} \langle \eta \rangle^{2N -M} \langle \eta' \rangle^{2N-M} \langle k - \ell \rangle^{-N} \langle \xi - \frac{k + \ell}{2} \rangle^{-N},
\end{align*}
where the constant now depends on at most $2M$ derivatives of $a$ (on top of its dependence on $\widehat{B}_0$, $N$ and $M$). Taking $M$ large enough (compared with $N$) the sum converges, and arguing similarly for derivatives of $b$ we obtain
\begin{equation}
| \partial_x^\alpha \partial_{\xi}^{\beta} b(x,\xi) | \leq C_N \langle k - \ell \rangle^{-N} \langle \xi - \frac{k + \ell}{2} \rangle^{-N}.
\end{equation}
We now use the definition \eqref{eq.Opa} of the quantization to find that, for a given $N>0$, one can pick $M>0$ large enough to derive
\begin{align*}
\| \Opu (b_{k \ell}) \|_{L^2 \to L^2} &\leq \frac{1}{(2\pi)^2} \sum_{\eta \in 2\pi \Z^2} \int_{\R^2} | \mathcal F(b_{k\ell})(\eta,\zeta) | \dd \zeta \\
& \leq C \sum_{\eta \in 2\pi \Z^2} \int_{\R^2} \langle \eta \rangle^{-M} \langle \zeta \rangle^{-M}\big| \mathcal F \big( \langle D_x \rangle ^M \langle D_\xi \rangle^M b_{k\ell} \big) \big| \dd \zeta \\
&\leq C \sum_{\eta \in 2\pi \Z^2} \int_{\R^2} \langle \eta \rangle^{-M} \langle \zeta \rangle^{-M} \| \langle D_x \rangle ^M \langle D_\xi \rangle^M b_{k\ell}  \|_{L^1} \dd \zeta\\
& \leq C \langle k - \ell \rangle^{-N},
\end{align*}
and note that $C$ only depends on a finite number of derivatives of $a$. Since 
$$\Opu( b_{k\ell}) = \Opu(a_k)^* \Opu(a_\ell),$$ 
we deduce with $N$ large enough that 
\begin{equation}
\sup_k \sum_{\ell} \| \Opu(a_k)^* \Opu(a_\ell) \|^{1/2} \leq C(a),
\end{equation}
for some $C(a)$ independent of $k$, $l$, but depending on finitely many derivatives of $a$. Similarly,
\begin{equation}
\sup_k \sum_{\ell} \| \Opu(a_k)\Opu(a_\ell)^* \|^{1/2} \leq C(a),
\end{equation}
and it follows from the Cotlar-Stein Theorem \cite[C.5]{Zworski} that $\Opu(a) = \sum_k \Opu(a_k)$ is a bounded operator on $L^2$, with norm smaller than $C(a)$. The statement for $h \in(0,1]$ follows from Remark \ref{r:rescale}.
\end{proof}

\begin{remark}
In Section~\ref{s:periodic}, we will have to study multiscale properties of magnetic eigenfunctions. To that aim, we will consider observables of the form $a\left(x,\xi,\frac{\xi}{h^{\frac12}}\right)$ and we will use repeatedly Remark~\ref{r:rescale} under the following form:
$$
\Op\left(a\left(x,\xi,\frac{\xi}{h^{\frac12}}\right)\right)=\mathsf{Op}_{h^{\frac12}}^B\left(a\left(x,h^{\frac12}\xi,\xi\right)\right).
$$
In the $h^{\frac12}$-calculus, the symbol is obviously uniformly in $\mathscr{S}^0(T^*\T^2)$  as soon as $a$ is compactly supported. Moreover, as $h^{\frac12}\xi$ is bounded, one has that the symbol $ h^{\frac m2}a\left(x,h^{\frac12}\xi,\xi\right)$ is uniformly in $\mathscr{S}^{-m}(T^*\T^2)$.
\end{remark}

We finally deduce from Theorems \ref{thm.comp} and \ref{thm.cv} the following useful properties of our quantization procedure.

\begin{corollary}
Let $a \in \mathscr{S}^m$, $b \in \mathscr{S}^{-m}$ with $m\in\R$. Then one has the following properties.
\begin{enumerate}
\item $\| \Op(a) \Op(b) - \Op(ab) \|_{L^2\rightarrow L^2} =\mathscr{O}(h)$,
where the constant in the remainder is bounded in terms of a finite number of seminorms of $a$.
\item at order $3$ 
\begin{multline*}\left\| \big[ \Op(a) , \Op(b) \big] - \frac{h}{i} \Op\big( \lbrace a,b \rbrace \big)-\frac{\widehat{B}_0h^2}{i}\Op\big(\partial_{\xi_2}a\partial_{\xi_1}b-\partial_{\xi_1}a\partial_{\xi_2}b\big) \right\|_{L^2\rightarrow L^2}\\ = \mathscr{O}(h^3),
 \end{multline*}
where the constant in the remainder is bounded in terms of a finite number of seminorms of $a$.
\item If ${\rm{supp}} \, a \cap {\rm{supp}}\, b = \emptyset$ then $\| \Op(a) \Op(b) \|_{L^2\rightarrow L^2} = \mathscr{O}(h^{\infty})$, where the constant in the remainder is bounded in terms of a finite number of seminorms of $a$ and $b$.
\end{enumerate}
\end{corollary}

\section{Proof for constant magnetic fields} \label{s:proof-quantitative}

 In this Section, we will give a proof of Theorem~\ref{t:maintheo-quantitative}, meaning that we will deal with the simplified case where $B$ is constant equal to $\widehat{B}_0$ and where $V$ is identically $0$. In that case, we are able to give a more direct proof of our quantum unique ergodicity property based on the exact formula given in Corollary~\ref{coro.comp.quadratic} together with appropriate stationary phase estimates. This allows to illustrate the dynamical mechanisms at work behind our proof even if it will not appear as explicitly when dealing with more general magnetic fields. Note that our proof does not make use of the exact expression of the eigenfunctions derived in Section~\ref{s:laplacian}. 

All along this section, $a$ will denote an element in $\mathscr{S}^0$ and we will decompose it in Fourier series along the $x$ variable as follows:
$$
a(x,\xi):=\sum_{k\in\mathbb{Z}^2}\widehat{a}_k(\xi)e^{2i\pi k\cdot x}.
$$
We also fix a function $\chi\in\mathscr{C}^\infty(\R,\R_+)$ which is identically equal to $1$ (resp. $0$) on $[-1/4,1/4]$ (resp. outside $[-1/2,1/2]$). From $\chi$ and given any $0<\delta<1$, we define the smooth function
$$
\chi_\delta(\xi)=\chi\left(\frac{|\xi-h\alpha|-1}{\delta}\right).
$$

\subsection{Averaging method}\label{ss:egorov-exact}

The proof of Theorem \ref{t:maintheo-quantitative} is divided into two steps. The first and main one is the following averaging lemma, which tells us that, to analyze the limits of Wigner distributions of the form $\langle \Op(a) u_h,u_h \rangle$, we can reduce to the case when $a$ depends only on $|\xi -h \alpha|$. In the second step (Section \ref{sec.microloc}), we will show that in fact $u_h$ is microlocalized where $|\xi | \sim | \xi -h\alpha | =1$, and use it to remove the last $\xi$ dependence.

\begin{lemma}\label{l:exact-egorov} Suppose that $\widehat{B}_0\in 2\pi\Z^*$. There exist constants $C_0,N_0>0$ such that, for any $0<h<1$, any $a\in \mathscr{S}^0$ and any $u_h$ solution to
$$
h^2\mathscr{L}_\alpha u_h=u_h,\quad\|u_h\|_{L^2}=1,
$$
one has
\begin{multline*}
\left|\langle \Op(a)u_h,u_h\rangle-\left\langle \Op\left(\chi_1(\xi)\frac{1}{2\pi}\int_{\mathbf S^1 }\widehat{a}_0 \big(|\xi-h\alpha|\theta+h\alpha\big)\dd\theta\right)u_h,u_h\right\rangle\right|\\
\leq C_0\|a\|_{\mathscr{C}^{N_0}}h^{\frac12}.
\end{multline*}
\end{lemma}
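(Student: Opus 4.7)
The approach exploits the fact that Corollary~\ref{coro.comp.quadratic} provides an \emph{exact} commutation formula, so the associated flow $\Phi_h^t$ of~\eqref{e:magnetic-periodicflow} acts on matrix elements with \emph{no remainder}, together with the fact that this flow is periodic with period $T_h=\pi/(h\widehat{B}_0)$. Indeed, self-adjointness of $h^2\mathscr{L}_\alpha=\Op(|\xi-h\alpha|^2)$ combined with the eigenvalue equation makes the matrix element of $[\Op(|\xi-h\alpha|^2),\Op(a)]$ on $u_h$ vanish identically, so Corollary~\ref{coro.comp.quadratic} yields
\[\langle \Op(\mathbf{X}_ha)u_h,u_h\rangle=0\quad\text{for every }a\in S^0.\]
Applying this identity with $a\circ\Phi_h^s$ in place of $a$ and integrating in $s$, a Duhamel argument shows that $t\mapsto\langle\Op(a\circ\Phi_h^t)u_h,u_h\rangle$ is constant; averaging over one full period of $\Phi_h^t$ therefore gives
\[\langle\Op(a)u_h,u_h\rangle=\langle\Op(\langle a\rangle_h)u_h,u_h\rangle,\qquad\langle a\rangle_h:=\frac{1}{T_h}\int_0^{T_h}a\circ\Phi_h^t\,\dd t.\]

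\textbf{Analysis of the averaged symbol.} Setting $\eta=\xi-h\alpha$ and substituting $\varphi=2th\widehat{B}_0\in[0,2\pi]$ in~\eqref{e:magnetic-periodicflow}, the Fourier decomposition of $a$ in $x$ gives
\[\langle a\rangle_h(x,\xi)=\sum_{k\in\Z^2}e^{2i\pi k\cdot x}\,\frac{1}{2\pi}\int_0^{2\pi}\widehat{a}_k(h\alpha+e^{\varphi J}\eta)\,\exp\!\Bigl(\tfrac{i}{h}\Psi_k(\varphi,\eta)\Bigr)\,\dd\varphi,\]
where $\Psi_k(\varphi,\eta)=\frac{2\pi}{\widehat{B}_0}\,k\cdot(e^{\varphi J}-I)\eta^\perp$. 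The contribution of $k=0$ is precisely the angular average
$\frac{1}{2\pi}\int_{S^*\T^2}a(y,h\alpha+|\eta|\theta)\,\dd y\,\dd\theta$,
i.e.\ the target symbol from the lemma (before multiplication by $\chi_1$). For $k\ne 0$ and $|\eta|$ bounded below, the phase $\Psi_k(\cdot,\eta)$ has exactly two non-degenerate critical points in $\varphi\in[0,2\pi)$ (those angles for which $e^{\varphi J}\eta$ is orthogonal to $k$), and the Hessian is proportional to $|k|/\widehat{B}_0$. The stationary phase method on the circle with large parameter $1/h$ therefore produces contributions of order $(h/|k|)^{1/2}$ per mode; summing in $k\ne 0$ using the rapid decay of $\widehat{a}_k$ (coming from the smoothness of $a$), one obtains on $\{\chi_1>0\}$ a remainder
\[r_h(x,\xi):=\chi_1(\xi)\,\langle a\rangle_h(x,\xi)\;-\;\chi_1(\xi)\cdot\frac{1}{2\pi}\int_{S^*\T^2}a(y,h\alpha+|\eta|\theta)\,\dd y\,\dd\theta\]
with $\|r_h\|_\infty=O(\|a\|_{\mathscr{C}^{N_0}}h^{1/2})$.

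\textbf{Passage to operator norms and energy localization.} The semiclassical Calder\'on-Vaillancourt bound (Theorem~\ref{thm.cv}) turns this symbol estimate into $\|\Op(r_h)\|_{L^2\to L^2}=O(\|a\|_{\mathscr{C}^{N_0}}h^{1/2})$: the delicate point is that every $\xi$-derivative falling on the oscillating factor $e^{i\Psi_k/h}$ costs a factor $h^{-1}$, but this loss is exactly compensated by the weight $h^{|\beta|}$ appearing in Theorem~\ref{thm.cv}, so that $h^{|\beta|}\|\partial_x^\gamma\partial_\xi^\beta r_h\|_\infty$ remains of order $h^{1/2}$ with constants depending on finitely many $\mathscr{C}^N$-seminorms of $a$. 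Finally, $(1-\chi_1)\langle a\rangle_h$ is supported in $\{||\xi-h\alpha|-1|\ge 1/4\}$, where the symbol $|\xi-h\alpha|^2-1$ of $h^2\mathscr{L}_\alpha-1$ is elliptic; a standard elliptic parametrix argument combined with the eigenvalue equation then gives $\Op((1-\chi_1)\langle a\rangle_h)u_h=\mathcal{O}_{L^2}(h^\infty)$, which completes the estimate. The main obstacle lies precisely in this careful tracking of $\xi$-derivatives through the stationary phase asymptotics, uniformly in the Fourier mode $k$, so that the weighted Calder\'on-Vaillancourt bound yields the sharp rate $h^{1/2}$ after summation.
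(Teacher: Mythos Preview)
Your proof is correct and follows essentially the same route as the paper's: exact Egorov via Corollary~\ref{coro.comp.quadratic} and periodic averaging along $\Phi_h^t$, Fourier decomposition in $x$ with stationary phase on the circle for the modes $k\neq 0$, and the weighted Calder\'on--Vaillancourt bound of Theorem~\ref{thm.cv} to absorb the $h^{-1}$ losses coming from $\xi$-derivatives of the oscillating phase. The only cosmetic difference is the order of operations: the paper inserts $\chi_1$ \emph{before} averaging (so the elliptic region is handled on the well-behaved symbol $(1-\chi_1)a$ rather than on $(1-\chi_1)\langle a\rangle_h$), but since $\chi_1$ is $\Phi_h^t$-invariant the two orderings are equivalent and your parametrix step goes through.
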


The key idea behind this Lemma is to use the periodic properties of the underlying magnetic flow $\Phi^t$, which is given by \eqref{e:magnetic-periodicflow}, and is associated to the vector field
\[
\mathbf{X}_h(x,\xi):=2(\xi-h\alpha)\cdot\partial_x-2h\widehat{B}_0(\xi-h\alpha)^\perp\cdot\partial_\xi.
 \]
In fact, from the eigenvalue equation we deduce the commutator estimate
\begin{equation}\label{eq.commutator14}
 \langle \left[ h^2 \mathscr{L}_h, \Op(a) \right] u_h, u_h \rangle = 0 ,
 \end{equation}
which tells us that eigenfunctions are stationary states. The fundamental properties of the magnetic Weyl quantization imply that such a commutator is the quantization of a Poisson bracket. More precisely, as noted in Corollary \ref{coro.comp.quadratic}, equation \eqref{eq.commutator14} can be rewritten as
\begin{equation}\label{eq.average14}
\langle \Op \left( \mathbf X_h (a) \right) u_h, u_h \rangle = 0,
\end{equation}
which now gives invariance by the \emph{classical} flow $\Phi^t$. We can therefore replace $a$ by its average along the flow. In fact the flow $\Phi^t$, which is nothing else that the cyclotron motion, is periodic with period $\pi/(h\widehat{B}_0)$. It follows very large circles, if we see the torus as a quotient of $\R^2$. This periodicity allows us to apply an averaging argument similar to the one used by Weinstein in his seminal work on eigenvalues of Schr\"odinger operators on the sphere~\cite{Weinstein1977}. See also~\cite{MaciaRiviere2016, MaciaRiviere2019, ArnaizMacia2022a, ArnaizMacia2022, CharlesLefeuvre2024} for similar arguments in the context of semiclassical measures. The extra difficulty compared with these references where the  flow is $2\pi$-periodic, is that we average over very long semiclassical times, of order $h^{-1}$. Yet, since the invariance \eqref{eq.average14} is \emph{exact} (with no remainder), the averaging procedure makes sense even after very large time. In summary, this periodicity allows to replace $a$ by its average along a circle, of radius of order $h^{-1}$:
\[ \langle \Op(a)u_h,u_h\rangle= \left\langle \Op\left(\frac{h\widehat{B}_0}{\pi}\int_0^{\frac{\pi}{h\widehat{B}_0}}a\circ\Phi_h^t\dd t\right)u_h,u_h\right\rangle.\]
 Then, we decompose $a$ according to its Fourier modes, and a stationnary phase argument shows that all non-zero modes yield negligible contributions. For technical reasons, it will turn out useful to first include the momentum cutoff $\chi_1$. Let us now proceed with the details of the proof.

\begin{proof} We start by including the cutoff $\chi_1$,
$$
\langle \Op(a)u_h,u_h\rangle= \langle \Op(a\chi_1)u_h,u_h\rangle+\langle \Op(a(1-\chi_1))u_h,u_h\rangle,
$$
and estimating the second term by standard microlocalization properties of the eigenfunctions. From the composition formula and~\eqref{e:symbol-laplacian}, we can write
\begin{multline*}
\langle \Op(a(1-\chi_1))u_h,u_h\rangle\\
=\langle \Op(a(1-\chi_1)(|\xi-h\alpha|^2-1)^{-1})(h^2\mathscr{L}_\alpha-1)u_h,u_h\rangle+\mathscr{O}_a(h),
\end{multline*}
where the constant in the remainder depends on a finite number of derivatives of $a$. Then, the eigenvalue equation $h^2 \mathscr{L}_\alpha u_h=u_h$ allows us to deduce that
\begin{equation}\label{e:cutoff-xi}
\langle \Op(a)u_h,u_h\rangle= \langle \Op(a\chi_1)u_h,u_h\rangle+\mathscr{O}_a(h).
\end{equation}

We now use the eigenvalue equation in the form \eqref{eq.commutator14} and \eqref{eq.average14}, and we get that, for every $a$ in $\mathscr{S}^0$,
$$
 \langle \Op(\chi_1\mathbf{X}_h(a))u_h,u_h\rangle=0,
$$
where we have used that $\mathbf{X}_h (\chi_1)=0$. We apply this formula to $a\circ\Phi_h^t$, where $\Phi_h^t$ is the flow induced by $\mathbf{X}_h$ and defined by~\eqref{e:magnetic-periodicflow}. We find that, for all $t\in\R$,
$$
 \langle \Op(\chi_1\mathbf{X}_h(a\circ \Phi_h^t))u_h,u_h\rangle=0.
$$
Integrating this expression, we get that
$$
\langle \Op(\chi_1a\circ \Phi_h^t)u_h,u_h\rangle=\langle \Op(\chi_1a)u_h,u_h\rangle.
$$
Then, we can integrate this invariance relation over a period of the flow and we obtain
\begin{equation}\label{e:cutoff-Egorov}
\langle \Op(a)u_h,u_h\rangle= \left\langle \Op\left(\chi_1\frac{h\widehat{B}_0}{\pi}\int_0^{\frac{\pi}{h\widehat{B}_0}}a\circ\Phi_h^t\dd t\right)u_h,u_h\right\rangle+\mathscr{O}_a(h).
\end{equation}
Recall the exact expression~\eqref{e:magnetic-periodicflow} for the flow $\Phi_h^t$, from which we deduce that
$$
\langle \Op(a)u_h,u_h\rangle= \left\langle \Op\left(\chi_1(\xi)\langle a\rangle_h(x,\xi)\right)u_h,u_h\right\rangle+\mathscr{O}_a(h),
$$
where the average $\langle a \rangle_h$ is defined by
\begin{multline*}
\langle a\rangle_h(x,\xi)\\
=\frac{1}{2\pi}\int_0^{2\pi}a\left(x+\frac{(\xi-h\alpha)^\perp}{h\widehat{B}_0}-J \frac{e^{tJ}}{h\widehat{B}_0}(\xi-h\alpha),e^{tJ}(\xi-h\alpha)+h\alpha\right) \dd t.
\end{multline*}
For $\xi-h\alpha\neq 0$, this can rewritten as an integral over the circle:
$$
\langle a\rangle_h(x,\xi)=\frac{1}{2\pi}\int_{\mathscr{S}^1}a\left(x+\frac{(\xi-h\alpha)^\perp}{h\widehat{B}_0}-J \frac{|\xi-h\alpha|\theta}{h\widehat{B}_0},|\xi-h\alpha|\theta+h\alpha\right) \dd\theta,
$$
which can be also decomposed using the Fourier decomposition of $a$ in the $x$-variable,
\begin{multline*}
\langle a\rangle_h(x,\xi)=\frac{1}{2\pi}\int_{\mathbf{S}^1}\widehat{a}_0\left(|\xi-h\alpha|\theta+h\alpha\right) d\theta
\\
+\frac{1}{2\pi}\sum_{k\neq 0}e^{2i\pi k\cdot\left(x+\frac{(\xi-h\alpha)^\perp}{h\widehat{B}_0}\right)}\int_{\mathbf{S}^1}e^{i\frac{2\pi|\xi-h\alpha| Jk}{h\widehat{B}_0}\cdot \theta }\widehat{a}_k\left(|\xi-h\alpha|\theta+h\alpha\right) \dd\theta.
\end{multline*}
We will now show that all non-zero modes have negligible contributions.

\begin{remark}
             Recall that there is a small abuse of notations by denoting by $d\theta$ the volume measure on the circle $\mathbf{S}^1=\{\theta\in\R^2:|\theta|=1\}$.
            \end{remark}
According to the Calder\'on-Vaillancourt Theorem~\ref{thm.cv}, each $k\neq 0$ yields the following contribution to the value of $\langle \Op(a)u_h,u_h\rangle$:
\begin{multline*}
\sum_{|\gamma|+|\beta|\leq N_0}h^{|\beta|}\langle k\rangle^{|\gamma|}\\
\times\left\|\partial_\xi^\beta\left(\chi_1(\xi)e^{2i\pi k\cdot\left(\frac{(\xi-h\alpha)^\perp}{h\widehat{B}_0}\right)}\int_{\mathbf{S}^1}e^{i\frac{2\pi|\xi-h\alpha| Jk}{h\widehat{B}_0}\cdot \theta }\widehat{a}_k\left(|\xi-h\alpha|\theta+h\alpha\right) \dd\theta\right)\right\|_\infty.
\end{multline*}
Recall now that the function $\chi_1(\xi)$ ensures that $|\xi-h\alpha|$ lies in the interval $[1/2,3/2]$ as it depends on $\xi$ through the variable $|\xi-h\alpha|$. Hence, the above sum can be bounded by
$$
C_1\langle k\rangle^{N_0}\sum_{|\beta|,\ell\leq N_0}
\sup_{\frac{1}{2}\leq |\xi+h\alpha |\leq \frac{3}{2}}\left|\int_{\mathbf{S}^1}e^{i\frac{2\pi|\xi-h\alpha| Jk}{h\widehat{B}_0}\cdot \theta }\left(\frac{k}{|k|}\cdot\theta\right)^\ell(\partial^{\beta}\widehat{a}_k)\left(|\xi-h\alpha|\theta+h\alpha\right) \dd\theta\right|,
$$
where $C_1>0$ depends only on $\chi$ and on $\widehat{B}_0$. In order to derive this bound from the previous one, note that, when differentiating $\widehat{a}_k$, this gives no rise to negative powers of $h$ while, when differentiating $e^{i\frac{2\pi|\xi-h\alpha| Jk}{h\widehat{B}_0}\cdot \theta }$, we get negative powers of $h$ which were compensated by the $h^{|\beta|}$ factor in front of the $L^\infty$ norm. This explains why there is no more $h^{|\beta|}$ factor in each term of the sum.

We are now in position to apply a standard stationary phase asymptotics~\cite[Th.~1, \S VIII.3.1]{Stein1993} to deduce that the oscillatory integral in this sum is bounded by
\begin{equation}
\left|\int_{\mathbf{S}^1}e^{i\frac{2\pi|\xi-h\alpha| Jk}{h\widehat{B}_0}\cdot \theta }\left(\frac{k}{|k|}\cdot\theta\right)^\ell(\partial^{\beta}\widehat{a}_k)\left(|\xi-h\alpha|\theta+h\alpha\right) \dd\theta\right|\leq C_2 h^{\frac{1}{2}} \|\widehat a_k\|_{\mathscr{C}^{N_2}},
\end{equation}
where $C_2,N_2>0$ do not depend on $a$ and $\alpha$ and where $C_2$ (but not $N_2$) depends also on $\widehat{B}_0$. Thanks to~\eqref{e:cutoff-Egorov}, we finally get that
\begin{multline*}
\langle \Op(a)u_h,u_h\rangle= \\ \left\langle \Op\left(\chi_1(\xi)\frac{1}{2\pi}\int_{\mathbf{S}^1}\widehat{a}_0\left(|\xi-h\alpha|\theta+h\alpha\right) \dd\theta\right)u_h,u_h\right\rangle+\mathscr{O}_a(h^{\frac{1}{2}}),
\end{multline*}
where the constant in the remainder depends on a finite number of derivatives of $a$ and on $\widehat{B}_0$. This concludes the proof of Lemma \ref{l:exact-egorov}.
\end{proof}

\subsection[Microlocalization]{Microlocalization near the unit cotangent bundle}\label{sec.microloc}
 We are now in position to prove Theorem~\ref{t:maintheo-quantitative}. To do so and according to Lemma~\ref{l:exact-egorov}, it is sufficient to analyze the following quantity:
$$
\left\langle\Op\left(\chi_1(\xi)\int_{S^*\T^2}a\left(x,|\xi-h\alpha|\theta+h\alpha\right)\dd x\dd\theta\right)u_h,u_h\right\rangle.
$$
We will show that $u_h$ is microlocalized where $|\xi| \sim |\xi -h\alpha| \sim 1$, and use this property to remove the last $\xi$-dependence. The above quantity can be split into two parts that we will analyze independently,
$$
A_1(h):=\left\langle\Op\left(\chi_1(\xi)\chi_{h^\frac12}(\xi)\int_{S^*\T^2}a\left(x,|\xi-h\alpha|\theta+h\alpha\right)\dd x\dd\theta\right)u_h,u_h\right\rangle,
$$
and
$$
A_2(h):=\left\langle\Op\left(\chi_1(\xi)\left(1-\chi_{h^\frac12}(\xi)\right)\int_{S^*\T^2}a\left(x,|\xi-h\alpha|\theta+h\alpha\right)\dd x\dd\theta\right)u_h,u_h\right\rangle.
$$
We begin with the case of $A_2(h)$. To that aim, we first observe that the symbol
$$
\tilde{a}_h(\xi):=\frac{{h^\frac12}(1-\chi_{h^\frac12}(\xi))}{|\xi-h\alpha|^2-1}\chi_1(\xi)\int_{S^*\T^2}a\left(x,|\xi-h\alpha|\theta+h\alpha\right)\dd x\dd\theta
$$
involves the function $\tilde{\chi}(t):=t^{-1}(1-\chi(t))$ which is a smooth function that is identically equal to $0$ on $[-1/4,1/4]$. Hence, even if the symbol has all its derivatives with respect to $\xi$ bounded, it is not uniformly in $\mathscr{S}^0$ as the derivatives are not bounded uniformly with respect to $h$. In fact, one has 
$$
\partial^\beta_\xi\left(\tilde{\chi}\left(\frac{|\xi-h\alpha|-1}{h^\frac12}\right)\right)=\mathscr{O}(h^{-\frac{|\beta|}{2}}).
$$
Yet, as $h^2\mathscr{L}_\alpha$, we can apply the last item of the composition formula to the pair $\tilde{a}_h$ and $|\xi-h\alpha|^2-1$ which gives an exact expression for the symbol of the operator for a fixed value of $h\in(0,1]$. Moreover, as all the involved symbols depend only on the $\xi$-variable, this gives the exact relation 
$$
A_2(h)=\left\langle\Op\left(\ldots\right)\frac{(h^2\mathscr{L}_\alpha-1)}{h^{\frac{1}{2}}}u_h,u_h\right\rangle+\mathscr{O}_a(h),
$$
where $\ldots$ is equal to
$$
\frac{\chi_1(\xi)}{|\xi-h\alpha|+1}\tilde{\chi}\left(\frac{|\xi-h\alpha|-1}{h^\frac12}\right)\int_{S^*\T^2}a\left(x,|\xi-h\alpha|\theta+h\alpha\right)\dd x\dd\theta.
$$
Using the eigenvalue equation for $u_h$, we find that $A_2(h)=\mathscr{O}_a(h),$ and we get that the proof of Theorem~\ref{t:maintheo-quantitative} reduces to estimating $A_1(h)$. To deal with this term, we apply the Taylor formula to write down
\begin{multline*}
 \int_{S^*\T^2}a\left(x,|\xi-h\alpha|\theta+h\alpha\right)\dd x\dd\theta=\int_{S^*\T^2}a\left(x,\theta\right)\dd x\dd\theta\\
 +\int_0^1\int_{S^*\T^2}(\partial_\xi a)\left(x,(1-t)\theta+t(|\xi-h\alpha|\theta+h\alpha)\right)\cdot((|\xi-h\alpha|-1)\theta+h\alpha)\dd t\dd x\dd\theta.
\end{multline*}
Again, the symbol 
\begin{multline*}
\chi_1(\xi)\chi_{h^\frac12}(\xi)\\
\times\int_0^1\int_{S^*\T^2}(\partial_\xi a)\left(x,(1-t)\theta+t(|\xi-h\alpha|\theta+h\alpha)\right)\cdot((|\xi-h\alpha|-1)\theta+h\alpha)\dd t\dd x\dd\theta
\end{multline*}
does not have its derivative uniformly bounded with respect to $h$. It only verifies that the derivatives of order $\beta$ are of size $\mathscr{O}(h^{\frac{1-|\beta|}{2}})$. Using the Calder\'on-Vaillancourt Theorem~\ref{thm.cv}, this implies that
$$
A_1(h)=\int_{S^*\T^2}a\left(x,\theta\right)\dd x\dd\theta\left\langle\Op\left(\chi_1(\xi)\chi_{h^\frac12}(\xi)\right)u_h,u_h\right\rangle+\mathscr{O}_a(h^{\frac12}).
$$
Applying our proof with $a\equiv 1$ implies that 
$$
\left\langle\Op\left(\chi_1(\xi)\chi_{h^\frac12}(\xi)\right)u_h,u_h\right\rangle=1+\mathscr{O}(h^{\frac12}),
$$
which concludes the proof of Theorem~\ref{t:maintheo-quantitative}.

\section{Magnetic semiclassical measures}
\label{s:semiclassicalmeasure}

In the general case, when the magnetic field $B$ is not constant, there is no periodic classical flow that we can use to apply the averaging method. Therefore, we need to use other techniques in order to catch the subleading effects of the magnetic field, and prove Theorem \ref{t:maintheo}. We start this section by recalling standard properties of semiclassical measures on the torus.

 In the following, we will say that a probability measure $\nu$ on $\T^2$ is a \emph{quantum limit} if there exists a sequence $(u_{h_n})_{n\geq 1}$ in $H^2(\T^2,L)$ solving the approximate eigenvalue equation
\begin{equation}\label{e:semiclassical-quasimode}
 h_n^2\mathscr{L}_\alpha u_{h_n}=u_{h_n}+o_{L^2}\Big(h_n^{\frac{3}{2}}\Big),\quad \|u_{h_n}\|_{L^2(\T^2)}=1,\quad h_n\rightarrow 0^+,
\end{equation}
and such that
\begin{equation}\label{e:defQL}
 \forall a\in\mathscr{C}^0(\T^2,\C),\quad\lim_{n\rightarrow \infty}\int_{\T^2}a(x) |u_{h_n}(x)|^2\dd x=\int_{\T^2}a(x)\nu(\dd x).
\end{equation}
Note that, as $u_{h_n}$ belongs to $L^2(\T^2,L)$, the function $|u_{h_n}(x)|^2$ belongs to $L^1(\T^2,\R_+)$. Moreover, as $\T^2$ is a compact metric space, the set of probability measures on $\T^2$ is compact (for the weak-$\star$ topology) and we can always extract converging subsequences from a given sequence of probability measures. The set of quantum limits for $\mathscr{L}_\alpha$ will be denoted by $\mathcal{N}(\mathscr{L}_\alpha).$ Our goal is to prove that the only possible quantum limit is the Lebesgue measure, i.e. $\mathcal N( \mathscr{L}_\alpha) = \lbrace {\rm{Leb}} \rbrace$. 

Let us start by recalling how these quantum limits can be lifted to the phase space $T^*\T^2$ using the magnetic Weyl quantization and the notion of semiclassical measures. We will also describe some elementary properties of these lifted measures.

\begin{remark}
As usual with semiclassical methods, we will drop the index $n$ in the following and rather write $(u_h)_{h\rightarrow 0^+}$. We will also refer to such families as sequences though the notation may be misleading.
\end{remark}

\subsection[Lifing quantum limits]{Lifting quantum limits to the cotangent bundle}

 In order to study quantum limits, it is standard to lift these measures to distributions on $T^*\T^2 = \T^2 \times \R^2$ through the following definition 
$$
w_h:a\in\mathscr{C}_c^{\infty}(T^*\T^2,\C)\mapsto \left\langle \Op(a)u_h,u_h\right\rangle_{L^2(\T^2,L)},
$$
where we recall that the scalar product $\langle u, v \rangle = \int_{\T^2}u(x)\overline{v}(x)dx$ for $u,v \in L^2(\T^2,L)$ is independent of the choice of the fundamental domain for $\T^2$. We list a few properties of these distributions in the following Lemma.
\begin{lemma}\label{l:magnetic-wigner} Let $(u_h)_{h\rightarrow 0^+}$ be a sequence verifying\footnote{In fact, for this lemma to be true, only a $o_{L^2}(h)$ is required.}~\eqref{e:semiclassical-quasimode}. Then the following holds:
\begin{enumerate}
 \item The corresponding sequence $(w_h)_{h\rightarrow 0^+}$ is bounded in $\mathscr{D}'(T^*\T^2)$. In particular, up to extraction, the sequence $(w_h)_{h\rightarrow 0^+}$ converges in $\mathscr{D}'(T^*\T^2)$ to some $\mu$ as $h\rightarrow 0^+$.
 \item Any accumulation point $\mu$ of $(w_h)_{h\rightarrow 0^+}$ is a probability measure on 
$$
S^*\T^2:=\{(x,\xi)\in T^*\T^2:\ |\xi |=1\},
$$
which is invariant by the geodesic flow 
$$\varphi^t:(x,\xi)\in T^*\T^2\mapsto (x+t\xi,\xi)\in T^*\T^2.$$
\item The measure $\nu = \pi_*\mu$ is a quantum limit for $(u_h)_{h \to 0^+}$, i.e. $\nu$ satisfies \eqref{e:defQL}, where $\pi:(x,\xi)\in T^*\T^2\mapsto x\in\T^2$.
\end{enumerate} 
\end{lemma}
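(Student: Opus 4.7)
The plan is to treat the three assertions in turn, each a consequence of the calculus developed in Section~\ref{s:quantization}. Throughout, I write $A := h^2\mathscr{L}_\alpha$, which by~\eqref{e:symbol-laplacian} has symbol $|\xi|^2 + h c_1(x)\cdot\xi + h^2 c_0(x)$. Assertion~(1) follows from the Calder\'on--Vaillancourt Theorem~\ref{thm.cv} applied to $a \in \mathscr{C}^\infty_c(T^*\T^2) \subset S^0$:
$$
|\langle w_h, a\rangle| \leq \|\Op(a)\|_{L^2\to L^2}\,\|u_h\|_{L^2}^2 \leq C_0 \sum_{|\gamma|+|\beta|\leq N_0} \|\partial_x^\gamma \partial_\xi^\beta a\|_\infty,
$$
uniformly in $h$, so Banach--Alaoglu produces weak-$*$ accumulation points in $\mathscr{D}'(T^*\T^2)$. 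Assertion~(3) is essentially immediate: for $\tilde a\in \mathscr{C}^\infty(\T^2)$ depending only on $x$, the quantization $\Op(\tilde a) = \tilde a(x)$ reduces to multiplication, so $\langle w_h,\tilde a\rangle = \int_{\T^2} \tilde a(x)|u_h(x)|^2\dd x$ converges to $\nu(\tilde a)$ along any subsequence realizing $w_h\rightharpoonup \mu$, giving $\pi_*\mu = \nu$.

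For assertion~(2), I would decompose the work into four pieces. First, positivity of $\mu$ reduces to a sharp G\aa rding inequality $\langle \Op(a)u,u\rangle \geq -Ch\|u\|_{L^2}^2$ for $a\in S^0$ with $a\geq 0$; this can be deduced from Theorem~\ref{thm.comp} via a Friedrichs (anti-Wick) quantization, provided the coherent state family is adapted to the magnetic periodicity~\eqref{eq.CL}. I expect this compatibility check to be the technical step requiring the most care. Second, for the support in $S^*\T^2$, given $a \in \mathscr{C}^\infty_c(T^*\T^2)$ with $\operatorname{supp}(a)\cap S^*\T^2=\emptyset$, the function $b := a/(|\xi|^2-1)$ is smooth and compactly supported, and Theorem~\ref{thm.comp} together with~\eqref{e:symbol-laplacian} yields
$$
\Op(a) = \Op(b)(A-1) + O_{L^2\to L^2}(h).
$$
Pairing with $u_h$ and invoking~\eqref{e:semiclassical-quasimode} gives $\langle w_h,a\rangle = O(h) \to 0$, so $\operatorname{supp}\mu \subset S^*\T^2$. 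Third, for the total mass, a direct computation on~\eqref{eq.Opa} shows $\Op(1) = W_h^B(0,0) = \mathrm{Id}$; choosing $\chi\in\mathscr{C}^\infty_c(\R^2)$ equal to $1$ near $\{|\xi|=1\}$ and applying the previous step to $1-\chi$ gives $\int \chi\,\dd\mu = 1$, hence $\mu(S^*\T^2)=1$.

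It remains to prove invariance under $\varphi^t$. Combining Theorem~\ref{thm.comp} with~\eqref{e:symbol-laplacian} (the subprincipal corrections $hc_1\cdot\xi+h^2 c_0$ contributing only at order $O(h^2)$) yields the commutator identity
$$
[A,\Op(a)] = \tfrac{2h}{i}\Op(\xi\cdot\partial_x a) + O_{L^2\to L^2}(h^2).
$$
Self-adjointness of $A$ together with~\eqref{e:semiclassical-quasimode} force $\langle [A,\Op(a)]u_h,u_h\rangle = o(h^{3/2})$, hence $\langle \Op(\xi\cdot\partial_x a)u_h,u_h\rangle = o(h^{1/2}) \to 0$. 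Passing to the limit along a converging subsequence gives $\int_{T^*\T^2}\xi\cdot\partial_x a\,\dd\mu = 0$ for every $a\in\mathscr{C}^\infty_c(T^*\T^2)$, which is precisely the invariance of $\mu$ under the geodesic flow.
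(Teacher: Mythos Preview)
Your argument follows the same overall architecture as the paper's proof, and the treatment of boundedness, support in $S^*\T^2$, and geodesic-flow invariance is essentially identical. Two points deserve comment.

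For positivity, the paper avoids G\aa rding entirely: given $a\geq 0$ in $\mathscr{C}^\infty_c$, it sets $a_\varepsilon=\sqrt{a+\varepsilon}\in S^0$ and uses the composition formula together with~\eqref{eq.adjoint} to write
\[
0\leq\|\Op(a_\varepsilon)u_h\|^2=\langle\Op(a+\varepsilon)u_h,u_h\rangle+\mathcal{O}_\varepsilon(h)=\langle\Op(a)u_h,u_h\rangle+\varepsilon+\mathcal{O}_\varepsilon(h),
\]
giving $\langle\mu,a\rangle\geq-\varepsilon$ for every $\varepsilon>0$. This sidesteps the coherent-state compatibility issue you flagged and uses only Theorems~\ref{thm.comp} and~\ref{thm.cv}, which are already available.

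Your argument for item~(3) has a genuine (if easily repaired) gap. The convergence $w_h\rightharpoonup\mu$ is in $\mathscr{D}'(T^*\T^2)$, hence only against \emph{compactly supported} test functions; a function $\tilde a(x)$ independent of $\xi$ does not qualify. The paper handles this by inserting a cutoff: one tests against $a_R(x,\xi)=\tilde a(x)\chi(|\xi|^2/R)$, lets $h\to 0^+$ to get $\langle\mu,a_R\rangle$, and then sends $R\to\infty$ using that $\mu$ is already known to be a finite measure supported on $S^*\T^2$. The same remark applies, more mildly, to your total-mass step: ``applying the previous step to $1-\chi$'' requires noting that the parametrix argument works for symbols in $S^0$ supported away from $\{|\xi|=1\}$, not only compactly supported ones (since $b=(1-\chi)/(|\xi|^2-1)\in S^{-2}$), which is exactly how the paper proceeds.
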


In the following, these probability measures will be refered as (magnetic) semiclassical measures and we will denote their set by $\mathcal{M}(\mathscr{L}_\alpha).$

\begin{proof} The proof of these facts is now standard in the case of Laplace eigenfunctions~\cite[Ch.~5]{Zworski} and we briefly explain how it can be directly translated in our magnetic setting. The main argument is that the principal symbol of $h^2 \mathscr{L}_\alpha$ is $|\xi|^2$, from which we deduce microlocalization properties of the eigenfunctions. For the first point, this follows directly from the fact that
$$
\|\Op(a)\|_{L^2(\T^2,L)\rightarrow L^2(\T^2,L)}\leq \frac{1}{(2\pi)^4}\sum_{\xi\in 2\pi\Z^2}\int_{\R^2}|\mathcal{F}(a)(\eta,\zeta)|\dd\zeta\leq C\|a\|_{\mathscr{C}^8},
$$
where the constant depends only on the support of $a$. Here we used~\eqref{eq.Opa} but we could as well have used the Calder\'on-Vaillancourt Theorem.

For the second point, we fix some smooth function $a\geq 0$ compactly supported in $T^*\T^2$ and some $\varepsilon>0$. We set $a_{\varepsilon}:=\sqrt{a+\varepsilon}\in \mathscr{S}^0$. Then, one can use the composition rule in Theorem $\ref{thm.comp}$ together with the formula \eqref{eq.adjoint} for the adjoint and the Calder\'on-Vaillancourt Theorem \ref{thm.cv} to write that 
$$
\|\Op(a_\varepsilon)u_h\|_{L^2}^2=\langle\Op(a_\varepsilon)^*\Op(a_\varepsilon)u_h,u_h\rangle=\langle \Op((a+\varepsilon))u_h,u_h\rangle+\mathscr{O}_\varepsilon(h).
$$
Hence, for every $a\in\mathscr{C}^\infty_c(T^*\T^2,\R_+)$ and for every $\varepsilon>0$, $\langle \mu,a\rangle\geq -\varepsilon $ from which we can infer that $\mu$ is a positive distribution, hence a measure. From the fact that $u_h$ is normalized in $L^2$, we also know that it has total mass $\leq 1$. 

To see that it has total mass equal to $1$, we fix a smooth function $a$ on $T^*\T^2$ which is identically equal to $1$ for $|\xi|\leq 2$ and identically $0$ for $|\xi|\geq 4$. We also fix $R>1$ and we set $a_R(x,\xi)=a(x,\xi/R)$. From the composition rule in Theorem \ref{thm.comp} and the Calder\'on-Vaillancourt Theorem \ref{thm.cv}, one finds
\begin{align*}
1 &= \langle\Op(a_R) u_h,u_h\rangle+ \langle \langle\Op(1-a_R) u_h,u_h\rangle \\
&=\langle\Op(a_R) u_h,u_h\rangle+\left\langle \Op(b_R)\left(\Op(|\xi|^2)-\text{Id}\right)u_h,u_h\right\rangle +\mathscr{O}(h),
\end{align*}
where $b_R(x,\xi)= (1-a_R(x,\xi))/(|\xi|^2-1)$ belongs to $\mathscr{S}^{-2}(T^*\T^2)$. Recall now from~\eqref{e:symbol-laplacian} that 
\begin{multline}\label{e:quantization-magnetic-laplacian}
h^2\mathscr{L}_\alpha
=\Op\left((\xi_1-h\alpha_1-hA_1^{\text{per}})^2+(\xi_2-h\alpha_2-hA_2^{\text{per}})^2\right)\\
= \Op(|\xi|^2+ h c_1(x)\cdot\xi+h^2c_0(x)),
\end{multline} where $c_0$ and $c_1$ belong to $\mathscr{C}^{\infty}(\T^2,\R)$ and to $\mathscr{C}^{\infty}(\T^2,\R^2)$ respectively. Hence, applying the composition rule one more time together with the Calder\'on-Vaillancourt Theorem, one finds that
$$
1= \langle\Op(a_R) u_h,u_h\rangle+\left\langle \Op(b_R)\left(h^2\mathscr{L}_\alpha-\text{Id}\right)u_h,u_h\right\rangle +\mathscr{O}(h).
$$
Using the eigenmode equation~\eqref{e:semiclassical-quasimode}, we find (after letting $h\rightarrow 0^+$) that $\langle\mu,a_R\rangle= 1$. Applying the dominated convergence Theorem, we find that $\mu$ is a probability measure. Along the way, we can verify with the same argument that $\mu$ is supported on $S^*\T^2$. Indeed, taking a function $a$ that is supported outside $\{|\xi|=1\}$, we find with the exact same argument that $\langle\mu,a\rangle =0$. Hence, $\text{supp}(\mu)\subset\{|\xi|=1\}$.

Recall that so far we only used that the error in the quasimode equation~\eqref{e:semiclassical-quasimode} is $o(1)$ as $h\rightarrow 0^+$. We are left with proving the invariance by the geodesic flow that will require the remainder to be $o(h)$. To see this, we use the quasimode equation to write 
$$
\left\langle\left[\Op(a),h^2\mathscr{L}_\alpha\right]u_h,u_h\right\rangle=o(h).
$$
Using the composition rule for pseudodifferential operators one last time, we find that
$$
\left\langle\Op(\{|\xi|^2,a\})u_h,u_h\right\rangle=o(1),
$$
from which we deduce that $\int_{S^*\T^2}\xi\cdot\partial_xa(x,\xi)\mu(\dd x,\dd\xi)=0$. This implies the invariance by the geodesic flow. Finally, the last item is obtained by using symbols $a(x)$ independent of $\xi$, by considering test functions of the form $a_R(x,\xi)=a(x)\chi(\|\xi\|^2/R)$ (with $\chi$ compactly supported and equal to $1$ near $0$) and by letting $R\rightarrow \infty$ (after letting $h\rightarrow 0^+$).
\end{proof}

\subsection{Decomposition of invariant measures}\label{s:decomposition}

 The quantum limits $\nu$ we are interested in are the pushforward of measures $\mu$ on $S^*\T^2$, that are invariant by the geodesic flow. In order to analyze the regularity of $\nu$, we can study the regularity of $\mu$. To that aim, we follow the strategy of~\cite{Macia2010, AnantharamanMacia2014}. Indeed, any geodesic-invariant measure can be decomposed according to the periodic orbits of the geodesic flow.
 
Periodic geodesics can be classified as follows. We denote by $\mathcal{L}_1$ the set of primitive sublattices $\Lambda$ of $\Z^2$ that are of rank $1$. This means that $\text{dim}\langle\Lambda\rangle=1$ and that $\langle \Lambda\rangle\cap\Z^2=\Lambda$ where $\langle\Lambda\rangle$ is the $\R$-vector space generated by $\Lambda$. For every $\Lambda\in\mathcal{L}_1$, we fix $\mathfrak{e}_\Lambda$ such that $\Z\mathfrak{e}_\Lambda=\Lambda$ and we denote by $\mathfrak{e}_\Lambda^\perp$ the vector that is directly orthogonal to it, with same length. We set $L_\Lambda:=|\mathfrak{e}_\Lambda |$ and $\Lambda^\perp:=\R \mathfrak{e}_\Lambda^\perp$. 

On the one hand, the geodesic flow on $T^*\T^2$ is periodic in each direction $\Lambda^{\perp}$, i.e. for every $a\in\mathscr{C}^\infty_c(T^*\T^2)$, one has
$$\forall (x,\xi)\in\T^2\times(\Lambda^\perp\setminus\{0\}),\quad  \lim_{T\rightarrow+\infty} 
\frac{1}{T}\int_0^Ta(x+t\xi,\xi)dt=\mathcal{I}_\Lambda(a)(x,\xi),$$
where
$$
\mathcal{I}_\Lambda(a)(x,\xi):=\sum_{k\in\Lambda}\widehat{a}_k(\xi)e^{2i\pi k\cdot x}.
$$
On the other hand, for every $\xi$ in
$$
\Omega_2:=\R^2\setminus\left(\bigcup_{\Lambda\in\mathscr{L}_1}\Lambda^\perp\right),
$$
one has, for every $a\in\mathscr{C}^\infty_c(T^*\T^2)$,
$$
\forall x\in\T^2,\quad \lim_{T\rightarrow+\infty} 
\frac{1}{T}\int_0^Ta(x+t\xi,\xi)\dd t=\int_{\T^2}a(y,\xi)\dd y.
$$
\begin{remark}\label{rem.Binfty}
 When $a$ is independent of $\xi$, we observe that the function $\mathcal{I}_\Lambda(a)$ is also independent of $\xi$. Note that, for $a(x,\xi)=B(x)$ and for $\theta$ belonging to $\Lambda^\perp(\theta)\cap\mathbf{S}^1$, one recovers the function $B_\infty$ from the introduction as $\mathcal{I}_{\Lambda(\theta)}(B)(x)=B_\infty(x,\theta)$ in that case.
\end{remark}

Thanks to these observations, we can now decompose each invariant probability measure $\mu$ on $S^*\T^2$ as follows:
$$
\mu=\mu|_{\T^2\times(\Omega_2\cap\mathbf{S}^1)}+\sum_{\Lambda\in\mathcal{L}_1}\mu|_{\T^2\times(\Lambda^\perp\cap\mathbf{S}^1)},
$$
where each term in this sum is a nonnegative and finite measure that is invariant by the geodesic flow. We can also write the Fourier decomposition of $\mu$, i.e.
$$
\mu(x,\xi):=\sum_{k\in\Z^2}\widehat{\mu}_k(\xi)e^{2i\pi x\cdot k},\quad\text{and}\quad\forall\Lambda\in\mathcal{L}_1,\ \mathcal{I}_\Lambda(\mu)=\sum_{k\in\Lambda}\widehat{\mu}_k(\xi)e^{2i\pi x\cdot k}.
$$
We then have the following general result~\cite[\S2]{AnantharamanMacia2014}.
\begin{lemma}\label{l:anantharamanmacia} Let $\mu$ be a probability measure on $S^*\T^2$ that is invariant by the geodesic flow. Then, the following holds:
\begin{enumerate}
 \item For any $\Lambda$ in $\mathcal{L}_1$, $\mathcal{I}_\Lambda(\mu)$ is a finite nonnegative Radon measure on $S^*\T^2$.
 \item For any $\Lambda$ in $\mathcal{L}_1$, we have $\mu|_{\T^2\times(\Lambda^\perp\cap\mathbf{S}^1)}=\mathcal{I}_\Lambda(\mu)|_{\T^2\times(\Lambda^\perp\cap\mathbf{S}^1)}$.
 \item $\widehat{\mu}_0$ is a finite nonnegative Radon measure on $S^*\T^2$ and $\mu|_{\T^2\times(\Omega_2\cap\mathbf{S}^1)}=\widehat{\mu}_0|_{\T^2\times(\Omega_2\cap\mathbf{S}^1)}$.
\end{enumerate}
\end{lemma}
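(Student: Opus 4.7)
The plan is to combine Fourier decomposition in the $x$-variable with the geodesic flow invariance of $\mu$. For each $k\in\Z^2$ introduce
\[
\widehat{\mu}_k:\varphi\in\mathscr{C}^\infty_c(\R^2)\mapsto\int_{S^*\T^2}e^{-2i\pi k\cdot x}\varphi(\xi)\,\mu(\dd x,\dd\xi),
\]
which, since $|\widehat{\mu}_k(\varphi)|\leq\|\varphi\|_\infty\,\mu(S^*\T^2)$, extends to a finite complex Radon measure on $\R^2$ of total variation at most one. The structural fact driving everything is: for every $k\in\Z^2\setminus\{0\}$, $\widehat{\mu}_k$ is supported on the line $k^\perp=\{\xi:k\cdot\xi=0\}$. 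Indeed, testing the invariance $(\varphi^t)_*\mu=\mu$ against $(x,\xi)\mapsto e^{-2i\pi k\cdot x}\varphi(\xi)$ yields $(e^{-2i\pi tk\cdot\xi}-1)\widehat{\mu}_k=0$ as Radon measures for every $t\in\R$; localizing in a neighborhood where $k\cdot\xi\neq 0$ (so $e^{-2i\pi tk\cdot\xi}-1$ is bounded away from zero for a suitable $t$) then forces $\widehat{\mu}_k$ to vanish there.

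For part (1), I would define $\mathcal{I}_\Lambda(\mu)$ by duality, $\langle\mathcal{I}_\Lambda(\mu),a\rangle:=\langle\mu,\mathcal{I}_\Lambda(a)\rangle$ for $a\in\mathscr{C}^\infty_c(T^*\T^2)$, and identify it as the weak-$\ast$ limit of the Birkhoff averages
\[
\nu_T:=\frac{1}{T}\int_0^T(\tau_{t\mathfrak{e}_\Lambda^\perp})_*\mu\,\dd t,\qquad\tau_v(x,\xi):=(x+v,\xi).
\]
Each $\nu_T$ is a Borel probability measure on $S^*\T^2$, and Fubini gives $\langle\nu_T,a\rangle=\langle\mu,a_T\rangle$ with $a_T(x,\xi):=\frac{1}{T}\int_0^T a(x+t\mathfrak{e}_\Lambda^\perp,\xi)\,\dd t$. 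Fourier-expanding $a$ in $x$ and using that for $k\notin\Lambda$ one has $|k\cdot\mathfrak{e}_\Lambda^\perp|\geq 1$ (primitivity of $\mathfrak{e}_\Lambda$), the Cesàro average $\tfrac{1}{T}\int_0^T e^{2i\pi tk\cdot\mathfrak{e}_\Lambda^\perp}\,\dd t=O(1/T)$ uniformly in $k$, so $a_T\to\mathcal{I}_\Lambda(a)$ uniformly on $\T^2\times K$ for any compact $K\subset\R^2$. Dominated convergence then gives $\nu_T\to\mathcal{I}_\Lambda(\mu)$ weakly, and the limit is a probability measure on $S^*\T^2$.

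For parts (2) and (3), the plan is to show directly that the signed measures $\mu-\mathcal{I}_\Lambda(\mu)$ and $\mu-\widehat{\mu}_0$ assign zero mass to $\T^2\times(\Lambda^\perp\cap\mathbf{S}^1)$ and $\T^2\times(\Omega_2\cap\mathbf{S}^1)$ respectively. Expanding $a\in\mathscr{C}^\infty_c(T^*\T^2)$ as $\sum_k\widehat{a}_k(\xi)e^{2i\pi k\cdot x}$ with rapid decay $\|\widehat{a}_k\|_\infty=O(\langle k\rangle^{-N})$, and combining this with $|\widehat{\mu}_k|\leq 1$ to swap sum and integration against the indicator of a Borel $E\subset\mathbf{S}^1$, one gets
\[
\int_{\T^2\times E}a\,\dd(\mu-\mathcal{I}_\Lambda(\mu))=\sum_{k\notin\Lambda}\int_{k^\perp\cap E}\widehat{a}_k(\xi)\,\dd\widehat{\mu}_k(\xi).
\]
For $E=\Lambda^\perp\cap\mathbf{S}^1$, primitivity of $\mathfrak{e}_\Lambda$ implies that $k\notin\Lambda$ forces $k^\perp\neq\Lambda^\perp$, hence $k^\perp\cap\Lambda^\perp\cap\mathbf{S}^1=\emptyset$ and every term vanishes; this yields (2). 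For (3), note that $\widehat{\mu}_0=\dd x\otimes(\pi_\xi)_*\mu$ is visibly a finite nonnegative measure on $\T^2\times\mathbf{S}^1$, and repeat the computation with $E=\Omega_2\cap\mathbf{S}^1$: for each $k\neq 0$, the line $k^\perp$ is rational and equals $\Lambda'^\perp$ for the primitive sublattice $\Lambda'$ parallel to $k$, hence $k^\perp\cap\Omega_2=\emptyset$ by definition of $\Omega_2$.

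The main technical subtlety is the passage from the $t$-family of distributional identities $(e^{-2i\pi tk\cdot\xi}-1)\widehat{\mu}_k=0$ to a genuine Radon measure concentrated on the line $k^\perp$, together with the legitimacy of integrating such measures against indicators of sets like $\Lambda^\perp\cap\mathbf{S}^1$ which are nowhere dense in $\R^2$; both are handled by the observation that each $\widehat{\mu}_k$ is itself a finite Radon measure rather than a purely distributional object, so that dominated convergence applies throughout.
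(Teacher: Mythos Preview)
The paper does not actually supply a proof of this lemma; it quotes the result from \cite[\S2]{AnantharamanMacia2014} and only remarks that the argument is ``purely dynamical''. Your proposal is therefore a genuine, self-contained proof filling in what the paper defers to the literature, and the approach you take---Fourier decomposition in $x$, the support constraint $\operatorname{supp}\widehat{\mu}_k\subset k^\perp$ coming from geodesic invariance, and realizing $\mathcal{I}_\Lambda(\mu)$ as a weak-$\ast$ limit of translation averages---is exactly the standard argument behind the cited reference.

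Your proof is correct. One cosmetic slip: in the displayed identity for parts (2)--(3) the pairing should read $\int_{k^\perp\cap E}\widehat{a}_k\,\dd\widehat{\mu}_{-k}$ rather than $\dd\widehat{\mu}_k$ (pairing $e^{2i\pi k\cdot x}$ against $\mu$ produces $\widehat{\mu}_{-k}$). This is harmless for the argument since $k\notin\Lambda\iff -k\notin\Lambda$ and $k^\perp=(-k)^\perp$, so the vanishing conclusion is unaffected. Everything else---the total-variation bound $|\widehat{\mu}_k|\leq 1$ justifying the interchange of sum and integral, the primitivity argument giving $|k\cdot\mathfrak{e}_\Lambda^\perp|\geq 1$ for $k\notin\Lambda$, and the two support computations $k^\perp\cap\Lambda^\perp\cap\mathbf{S}^1=\emptyset$ (for $k\notin\Lambda$) and $k^\perp\cap\Omega_2=\emptyset$ (for $k\neq 0$)---is sound.
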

Note that the proof of this Lemma is purely dynamical: it does not require $\mu$ to be a semiclassical measure. The fact that our semiclassical measures $\mu$ are indeed invariant by the geodesic flow allows us to decompose them accordingly,
\begin{equation}\label{eq.decomposition14}
\mu=\widehat{\mu}_0|_{\T^2\times(\Omega_2\cap\mathbf{S}^1)}+\sum_{\Lambda\in\mathcal{L}_1}\mathcal{I}_\Lambda(\mu)|_{\T^2\times(\Lambda^\perp\cap\mathbf{S}^1)}.
\end{equation}
Hence, in order to understand the regularity of $\mu$ along the $x$-variable (and thus of its pushforward $\nu$ on $\T^2$), we need to understand the regularity of each individual measure $\mathcal{I}_\Lambda(\mu)|_{\T^2\times(\Lambda^\perp\cap\mathbf{S}^1)}.
$ This is the content of the next section.

\section[Periodic orbits]{Analyzing the semiclassical measure along periodic orbits}\label{s:periodic}

We consider a measure $\mu$ which is an accumulation point of the distribution $w_h$, with 
\[ w_h(a) = \langle \Op(a) u_h, u_h \rangle, \qquad a \in \mathscr{S}^0(\T^2 \times \R^2).\]
We have seen with Lemma \ref{l:magnetic-wigner} that $\mu$ must be invariant by the geodesic flow, and therefore it can be decomposed as in \eqref{eq.decomposition14}. In this section we analyse the contribution of periodic orbits, i.e. we study the measure $\mathcal{I}_\Lambda(\mu)$, for any $\Lambda \in \mathcal L_1$.

We will split the analysis into two parts, depending on whether the direction $\xi$ is close to $\Lambda^\perp$ or not. To that aim, we fix a sequence $R_m\rightarrow +\infty$ and we let $\chi$ be a smooth function on $\R$ which is identically equal to $1$ (resp. $0$) on $[-1,1]$ (resp. outside $[-2,2]$). We define
$$
\chi_{R_mh^{\frac{1}{2}},\Lambda}(\xi):=\chi\left(\frac{\xi\cdot\mathfrak{e}_{\Lambda}}{R_mh^{\frac{1}{2}}L_\Lambda}\right),
$$
where we recall that $L_\Lambda = \| \mathfrak{e}_{\Lambda} \|$, and we split $w_h$ accordingly:
$$
\forall a\in\mathscr{C}^\infty_c(T^*\T^2),\quad\langle w_h,a\rangle=\left\langle w_h,a\chi_{R_mh^{\frac{1}{2}},\Lambda}\right\rangle+\left\langle w_h,a\left(1-\chi_{R_mh^{\frac{1}{2}},\Lambda}\right)\right\rangle.
$$
Again, we will drop the index for the sequence $(R_m)_{m\geq 1}$ and just write $R\rightarrow \infty$. This leads to the following definitions:
$$
w_{h,R,\Lambda}: a\in\mathscr{C}^\infty_c(T^*\T^2)\mapsto \left\langle w_h,a\chi_{Rh^{\frac{1}{2}},\Lambda}\right\rangle,
$$
and
$$w_{h,R}^{\Lambda}: a\in\mathscr{C}^\infty_c(T^*\T^2)\mapsto \left\langle w_h,a\left(1-\chi_{Rh^{\frac{1}{2}},\Lambda}\right)\right\rangle.
$$
We will analyze the possible accumulation points of each of these sequences independently, as $h \to 0$, and for $R$ very large. In fact, we will use the notation 
$$(w_{h,R,\Lambda})_{h\to 0^+,R\to +\infty},$$ 
where we implicitly mean that we take the limit $h \to 0^+$ and then $R \to \infty$, in this precise order. Then, we have the following analogue of Lemma~\ref{l:magnetic-wigner} for each of these sequences.

\begin{lemma}\label{l:magnetic-wigner-split} Let $(u_h)_{h\rightarrow 0^+}$ be a sequence verifying~\eqref{e:semiclassical-quasimode} with a unique\footnote{Up to extraction, this is always the case.} semiclassical measure $\mu$. Then the following holds:
\begin{enumerate}
 \item The sequences $(w_{h,R,\Lambda})_{h\rightarrow 0^+,R\rightarrow +\infty}$ and $(w_{h,R}^{\Lambda})_{h\rightarrow 0^+, R\rightarrow +\infty}$ are bounded in $\mathscr{D}'(T^*\T^2)$;
 \item Any accumulation point $\mu_\Lambda$ and $\mu^{\Lambda}$ of the sequences $(w_{h,R,\Lambda})_{h\rightarrow 0^+,R\rightarrow +\infty}$ and $(w_{h,R}^{\Lambda})_{h\rightarrow 0^+, R\rightarrow +\infty}$ are finite nonnegative measure on $S^*\T^2$
which are invariant by the geodesic flow;
\item $\mu=\mu^{\Lambda}+\mu_{\Lambda}$.
\end{enumerate} 
\end{lemma}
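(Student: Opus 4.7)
The plan is to mirror the proof of Lemma \ref{l:magnetic-wigner}, the only new ingredient being a careful semiclassical bookkeeping of the cutoff $\chi_{Rh^{1/2},\Lambda}$, which depends only on $\xi$ and whose $\xi$-derivatives grow like $\mathcal{O}((Rh^{1/2})^{-|\beta|})$. Each extra derivative along $\xi$ in the composition or Calder\'on--Vaillancourt estimate is weighted by a factor $h^{|\beta|}$ (via Remark \ref{r:rescale}), producing the effective small parameter $h^{1/2}/R$. Since we take the joint limit $h \to 0^+$ followed by $R \to +\infty$, this parameter vanishes and all subprincipal terms in Theorems \ref{thm.comp} and \ref{thm.cv} remain negligible.

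For items (1) and (3), the rescaled Calder\'on--Vaillancourt bound gives
\[
\bigl\|\Op(a\chi_{Rh^{1/2},\Lambda})\bigr\|_{L^2 \to L^2} \leq C_0 \sum_{|\gamma|+|\beta| \leq N_0} h^{|\beta|} \bigl\|\partial_x^\gamma \partial_\xi^\beta (a\chi_{Rh^{1/2},\Lambda})\bigr\|_\infty \leq C_a \sum_{|\beta| \leq N_0} \bigl(h^{1/2}/R\bigr)^{|\beta|},
\]
which is uniformly bounded in $(h,R)$; the bound for $w_{h,R}^\Lambda = w_h - w_{h,R,\Lambda}$ follows by subtraction and Lemma \ref{l:magnetic-wigner}. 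The additivity $\mu = \mu_\Lambda + \mu^\Lambda$ is immediate from passing to the limit in the identity $\langle w_h, a\rangle = \langle w_{h,R,\Lambda}, a\rangle + \langle w_{h,R}^\Lambda, a\rangle$ along suitable subsequences.

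For item (2), positivity is obtained as in Lemma \ref{l:magnetic-wigner} by fixing $\chi \geq 0$ and writing $(a+\varepsilon)\chi_{Rh^{1/2},\Lambda}$ as the $\star$-square of $\sqrt{a+\varepsilon}\sqrt{\chi_{Rh^{1/2},\Lambda}}$, modulo a remainder of size $\mathcal{O}(h^{1/2}/R)$ produced by the composition formula with our scaling. Concentration on $S^*\T^2$ is inherited from that of $\mu$ via the same elliptic trick: any $a$ supported outside $\{|\xi|=1\}$ can be written $a=(|\xi|^2-1)b$, and the quasi-mode equation $(h^2\mathscr{L}_\alpha-1)u_h = o_{L^2}(h^{3/2})$ combined with the uniform norm bound yields $\langle w_{h,R,\Lambda}, a\rangle \to 0$. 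The key new point is invariance under $\varphi^t$: since $\chi_{Rh^{1/2},\Lambda}$ depends only on $\xi$, we have the crucial identity $\{|\xi|^2, a\chi_{Rh^{1/2},\Lambda}\} = 2\xi \cdot \partial_x a \cdot \chi_{Rh^{1/2},\Lambda}$, with \emph{no} $\xi$-derivative falling on $\chi$ at leading order. Combining the commutator bound $\langle [\Op(a\chi_{Rh^{1/2},\Lambda}), h^2\mathscr{L}_\alpha] u_h, u_h\rangle = o(h^{3/2})$ (from the quasi-mode equation and the uniform $L^2$-bound above) with the expansion in Theorem \ref{thm.comp}, one divides by $h$ and passes to the limit to obtain $\int 2\xi \cdot \partial_x a \, d\mu_\Lambda = 0$; the analogous identity for $\mu^\Lambda$ is proved in the same way.

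The main obstacle is precisely verifying that the magnetic correction of order $h^2\widehat{B}_0$ in Theorem \ref{thm.comp}\,(4), as well as the subprincipal symbol $hc_1(x)\cdot\xi$ of $h^2\mathscr{L}_\alpha$ in \eqref{e:symbol-laplacian}, remain negligible despite the $(Rh^{1/2})^{-|\beta|}$ loss whenever $\xi$-derivatives fall on $\chi_{Rh^{1/2},\Lambda}$. Each such contribution is at worst of size $h^{1/2}/R$ after division by $h$, which is exactly the scale annihilated by the iterated limit $h\to 0^+$ then $R\to +\infty$; the higher-order terms in the asymptotic expansion of Theorem \ref{thm.comp} yield only higher powers of $h^{1/2}/R$ and are therefore harmless.
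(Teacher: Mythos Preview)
Your proof is correct and follows essentially the same strategy as the paper. The only presentational difference is that the paper makes the rescaling to the $h^{1/2}$-quantization explicit at the outset, writing $\langle w_{h,R,\Lambda},a\rangle = \langle\mathsf{Op}_{h^{1/2}}^B(a(x,h^{1/2}\xi)\chi_{R,\Lambda}(\xi))u_h,u_h\rangle$, so that the symbol lies uniformly in $S^0$ (for $0<h<1$, $R>1$) and the standard composition and Calder\'on--Vaillancourt estimates apply directly with parameter $h^{1/2}$; your approach of staying in the $h$-quantization and tracking the effective small parameter $h^{1/2}/R$ via the $h$-weighted bound of Theorem~\ref{thm.cv}(2) is exactly equivalent through Remark~\ref{r:rescale}. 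Your final paragraph correctly identifies (and disposes of) the only nontrivial point, namely that the magnetic correction and the $hc_1(x)\cdot\xi$ contribution both produce at worst an $\mathcal{O}(h^{1/2}/R)$ error after division by $h$, which is killed by the iterated limit.
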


Let us draw a few consequences of this Lemma. First, one has
\begin{equation}\label{e:splitting-2microlocal-measure}
\mu|_{\T^2\times(\Lambda^\perp\cap\mathbf{S}^1)}=\mu^{\Lambda}|_{\T^2\times(\Lambda^\perp\cap\mathbf{S}^1)}+\mu_{\Lambda}|_{\T^2\times(\Lambda^\perp\cap\mathbf{S}^1)}.
\end{equation}
Also, since $\chi_{R h^{\frac 12}, \Lambda}$ is supported on an $h^{\frac 12}$-neighborhood of $\Lambda^\perp$, it follows by construction that $\mu_\Lambda$ is supported on $\Lambda^\perp$ i.e. $\mu_{\Lambda}|_{\T^2\times(\Lambda^\perp\cap\mathbf{S}^1)}=\mu_\Lambda$. Thanks to the invariance by the geodesic flow, we can apply Lemma~\ref{l:anantharamanmacia} to the measures $\mu$, $\mu^{\Lambda}$ and $\mu_{\Lambda}$ and we deduce
\begin{equation}\label{e:splitting-2microlocal-measure2}
\mu|_{\T^2\times(\Lambda^\perp\cap\mathbf{S}^1)}=\mathcal{I}_\Lambda(\mu)|_{\T^2\times(\Lambda^\perp\cap\mathbf{S}^1)}=\mathcal{I}_\Lambda(\mu^{\Lambda})|_{\T^2\times(\Lambda^\perp\cap\mathbf{S}^1)}+\mathcal{I}_\Lambda(\mu_{\Lambda}).
\end{equation}
In particular, in view of dealing with the regularity of $\mu|_{\T^2\times(\Lambda^\perp\cap\mathbf{S}^1)}$, we only need to deal with the regularity of $\mu_\Lambda$ and $\mu^{\Lambda}$ and to work with test functions having all their Fourier coefficients $k$ in $\Lambda$.

\begin{proof}[Proof of Lemma \ref{l:magnetic-wigner-split}] Using~\eqref{e:change-semiclassical-parameter}, we can rescale the quantization and write
 $$
 \langle w_{h,R,\Lambda},a\rangle=\left\langle \mathsf{Op}_{h^{\frac{1}{2}}}^B\Big(a\big(x,h^{\frac{1}{2}}\xi\big)\chi_{R,\Lambda}(\xi)\Big)u_h,u_h\right\rangle,
 $$
 and
 $$
 \langle w_{h,R}^{\Lambda},a\rangle=\left\langle \mathsf{Op}_{h^{\frac{1}{2}}}^B\Big(a\big(x,h^{\frac{1}{2}}\xi\big)(1-\chi_{R,\Lambda}(\xi))\Big)u_h,u_h\right\rangle.
 $$
Without loss of generality, we can assume $0<h<1$ and $R>1$ so that all the derivatives of the symbol $a\big(x,h^{\frac{1}{2}}\xi\big)\chi_{R,\Lambda}(\xi)$ are uniformly bounded in terms of $h$ and $R$, and the involved symbols belong to the class $\mathscr{S}^0$. Hence, arguing as in the proof of item 1 of Lemma~\ref{l:magnetic-wigner-split}, we get the first and the third items of the present Lemma and the fact that the accumulation points are nonnegative measures. As $\mu$ is supported on $S^*\T^2$, we deduce that both $\mu_\Lambda$ and $\mu^{\Lambda}$ are also supported on $S^*\T^2$. Hence, the only thing to check is the invariance by the geodesic flow. To see this, we repeat the same argument as in the proof of Lemma~\ref{l:magnetic-wigner} with the semiclassical parameter $h^{\frac{1}{2}}$ instead of $h$. Using the eigenvalue equation \eqref{e:semiclassical-quasimode} and dividing by $h$ we have
\begin{equation}\label{eq.name0}
 \left\langle \left[\mathsf{Op}_{h^{\frac{1}{2}}}^B\left(a\big(x,h^{\frac{1}{2}}\xi\big)\tilde{\chi}_{R,\Lambda}(\xi)\right),h\mathscr{L}_\alpha \right]u_h,u_h\right\rangle=o(h^{\frac{1}{2}}),
\end{equation}
for both $\tilde{\chi}_{R,\Lambda}=\chi_{R,\Lambda}$ or $1-\chi_{R,\Lambda}$. Recall that $h\mathscr{L}_{\alpha}=\mathsf{Op}_{h^{\frac{1}{2}}}^B\big(|\xi |^2+ h^{\frac{1}{2}} c_1(x)\cdot\xi+hc_0(x)\big)$, where $c_0$ and $c_1$ belong to $\mathscr{C}^{\infty}(\T^2,\C)$ and to $\mathscr{C}^{\infty}(\T^2,\C^2)$ respectively. Applying the composition rule from Theorem \ref{thm.comp} together with the fact that $|\xi |^2$ is quadratic of order $2$ (and independent of $x$) and the support properties of $a$, $\tilde{\chi}_{R,\Lambda}$ and $\nabla\tilde{\chi}_{R,\Lambda}$, we claim that
\begin{multline}\label{eq.name}
 \left\langle \left[\mathsf{Op}_{h^{\frac{1}{2}}}^B\left(a\big(x,h^{\frac{1}{2}}\xi\big)\tilde{\chi}_{R,\Lambda}(\xi)\right),\mathsf{Op}_{h^{\frac{1}{2}}}^B\left(|\xi |^2\right) \right]u_h,u_h\right\rangle\\
 = 2i h^{\frac{1}{2}}\left\langle \mathsf{Op}_{h^{\frac{1}{2}}}^B\left(\xi\cdot\partial_x a\big(x,h^{\frac{1}{2}}\xi\big)\tilde{\chi}_{R,\Lambda}(\xi)\right)u_h,u_h\right\rangle +o(h^{\frac{1}{2}}),
\end{multline}
where the remainder term may depend on $R$. Similarly, we also claim that the remaining term satisfies
\begin{equation}\label{eq.name2}
\left\langle \left[\mathsf{Op}_{h^{\frac{1}{2}}}^B\left(a\big(x,h^{\frac{1}{2}}\xi\big)\tilde{\chi}_{R,\Lambda}(\xi)\right),\mathsf{Op}_{h^{\frac{1}{2}}}^B\left(h^{\frac{1}{2}} c_1(x)\xi+hc_0(x)\right) \right]u_h,u_h\right\rangle=o(h^{\frac{1}{2}}).
\end{equation}
Note that, since the symbols involved are not uniformly in $\mathscr{S}^0$, equations \eqref{eq.name} and \eqref{eq.name2} are not straightforward. We provide some details in the case of \eqref{eq.name2}, the other one being similar. First note that the
$h^{1/2}a\big(x,h^{\frac{1}{2}}\xi\big)\tilde{\chi}_{R,\Lambda}(\xi)$ is uniformly in $\mathscr{S}^{-1}$. Indeed, the support properties of $a$ ensures that $h^{\frac12}\xi$ is uniformly bounded so that $h^{\frac12}=\mathscr{O}(\langle\xi\rangle^{-1})$ and differentiating with respect to $(x,\xi)$ does not give any growth in $\langle\xi\rangle$ or $h$. Therefore, since $c_1(x)  \cdot\xi \in \mathscr{S}^1$, the composition Theorem \ref{thm.comp} gives
\begin{multline*}
\left\langle \left[\mathsf{Op}_{h^{\frac{1}{2}}}^B\left(a\big(x,h^{\frac{1}{2}}\xi\big)\tilde{\chi}_{R,\Lambda}(\xi)\right),\mathsf{Op}_{h^{\frac{1}{2}}}^B\left(h^{\frac{1}{2}} c_1(x)\cdot\xi \right) \right]u_h,u_h\right\rangle \\= \frac{ h^{\frac 12}}{i} \left\langle \mathsf{Op}_{h^{\frac{1}{2}}}^B \big\lbrace h^{\frac 12} a \tilde{\chi}_{R,\Lambda}, c_1(x)\cdot \xi \big\rbrace  u_h, u_h \right\rangle + o(h^{\frac{1}{2}}).
\end{multline*}
Using the support properties of $a$ and $\tilde{\chi}_{R,\Lambda}$, we claim that
\[  \big\lbrace  a(x,h^{\frac 12} \xi) \tilde{\chi}_{R,\Lambda}(\xi), c_1(x)\cdot \xi \big\rbrace \in \mathscr{S}^0, \]
uniformly. This last observation follows from the fact that the only growth in $\xi$ may come from the fact that $c_1(x)\cdot \xi$ is differentiated with respect to $x$. Yet, in such a case,  either one differentiates $a$ with respect to $\xi$ and a factor $h^{1/2}$ comes out (recall that $h^{1/2}\xi$ is bounded thanks to the support properties of $a$), or one differentiates $\tilde{\chi}_{R,\Lambda}$ (and $\xi$ becomes bounded). Thus,
\[\left\langle \left[\mathsf{Op}_{h^{\frac{1}{2}}}^B\left(a\big(x,h^{\frac{1}{2}}\xi\big)\tilde{\chi}_{R,\Lambda}(\xi)\right),\mathsf{Op}_{h^{\frac{1}{2}}}^B\left(h^{\frac{1}{2}} c_1(x)\xi \right) \right]u_h,u_h\right\rangle = o(h^{\frac 12}). \]
The term $c_0(x)$ in \eqref{eq.name2} is easily estimated since it is in $\mathscr{S}^0$. 

Now, combining \eqref{eq.name0}, \eqref{eq.name} and \eqref{eq.name2}, and dividing by $h^{\frac 12}$ we find
\[ \lim_{h \to 0} \left\langle \mathsf{Op}_{h^{\frac{1}{2}}}^B\left(\xi\cdot\partial_x a\big(x,h^{\frac{1}{2}}\xi\big)\tilde{\chi}_{R,\Lambda}(\xi)\right)u_h,u_h\right\rangle = 0. \]
We deduce that the measures $\mu_\Lambda$ and $\mu^\Lambda$ are invariant by the geodesic flow.

\end{proof}

\subsection{Analyzing the noncompact part}\label{sec.noncompactpart}

 We start by analyzing the ``noncompact'' part of the measure, meaning $\mu^{\Lambda}$.
\begin{lemma}\label{lem.muup}
Let $k\in\Lambda\setminus \{0\}$ and let $a\in\mathscr{C}^\infty_c(\R^2)$. Then, one has
$$
\int_{S^*\T^2} e^{2i\pi k\cdot x} a(\xi)\mu^{\Lambda}(\dd x,\dd\xi)=0.
$$
In particular, the nonnegative Radon measure $\mathcal{I}_\Lambda(\mu^{\Lambda})(x,\xi)=\int_{\T^2}\mu^{\Lambda}(\dd x,\xi)$ is independent of $x$.
\end{lemma}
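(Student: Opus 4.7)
The plan is to exploit the eigenvalue equation via a carefully chosen commutator. For $k \in \Lambda \setminus \{0\}$, write $k = n \mathfrak{e}_\Lambda$ with $n \in \Z\setminus\{0\}$, so that on the support of $1 - \chi_{Rh^{1/2},\Lambda}$ one has $|k \cdot \xi| = |n| \, |\mathfrak{e}_\Lambda \cdot \xi| \geq |n| R h^{1/2} L_\Lambda$. This lower bound is what allows division by $k\cdot\xi$. I would then introduce the symbol
$$
b_{h,R}(x,\xi) := \frac{e^{2i\pi k\cdot x} a(\xi)\bigl(1-\chi_{Rh^{1/2},\Lambda}(\xi)\bigr)}{4i\pi \, k\cdot \xi},
$$
designed so that $\{|\xi|^2, b_{h,R}\} = 2\xi\cdot\partial_x b_{h,R}$ reproduces exactly the symbol $e^{2i\pi k\cdot x} a(\xi)(1-\chi_{Rh^{1/2},\Lambda})$ appearing in the definition of $\mu^\Lambda$ evaluated at the test function $e^{2i\pi k \cdot x} a(\xi)$.

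The second step is to compute $\langle [h^2 \mathscr{L}_\alpha, \Op(b_{h,R})] u_h, u_h\rangle$ in two ways. On the one hand, using the approximate eigenvalue equation \eqref{e:semiclassical-quasimode}, this quantity is $o(h^{3/2})\|\Op(b_{h,R})\|_{L^2\to L^2}$. On the other hand, by \eqref{e:symbol-laplacian} and the composition formula of Theorem~\ref{thm.comp}, it equals
$$
\frac{h}{i}\bigl\langle \Op\bigl(e^{2i\pi k\cdot x} a(\xi)(1-\chi_{Rh^{1/2},\Lambda})\bigr) u_h, u_h\bigr\rangle + \text{lower-order terms},
$$
where the lower-order terms come from (i) the $\widehat{B}_0 h^2$ correction in \eqref{eq.commutator}, (ii) higher-order terms in the Taylor expansion of $\star$ with $|\xi|^2$, and (iii) the commutators with the subprincipal parts $hc_1(x)\cdot \xi$ and $h^2 c_0(x)$. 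Dividing by $h$ and estimating $\Op(b_{h,R})$ by Theorem~\ref{thm.cv} then yields the bound. The crucial size estimate is that on the support of $1-\chi_{Rh^{1/2},\Lambda}$, each $\xi$-derivative (whether acting on $1/(k\cdot\xi)$ or on the cutoff) contributes a factor $(Rh^{1/2})^{-1}$, so that $h^{|\beta|}|\partial_x^\gamma\partial_\xi^\beta b_{h,R}|_\infty \lesssim (Rh^{1/2})^{-1}R^{-|\beta|}$; hence $\|\Op(b_{h,R})\|_{L^2\to L^2} = O((Rh^{1/2})^{-1})$. Combining both sides gives
$$
\bigl\langle \Op\bigl(e^{2i\pi k\cdot x} a(\xi)(1-\chi_{Rh^{1/2},\Lambda})\bigr) u_h, u_h\bigr\rangle = O(R^{-1}) + o_h(1),
$$
and letting $h \to 0^+$ then $R \to +\infty$ (in this order, as in the definition of $\mu^\Lambda$) shows $\int_{S^*\T^2} e^{2i\pi k \cdot x} a(\xi)\, d\mu^\Lambda = 0$. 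The final assertion is a direct consequence: the Fourier coefficients $\widehat{\mu^\Lambda}_k$ vanish for $k \in \Lambda \setminus \{0\}$, so $\mathcal{I}_\Lambda(\mu^\Lambda)(x,\xi) = \widehat{\mu^\Lambda}_0(\xi) = \int_{\T^2} \mu^\Lambda(\dd x, \xi)$, independently of $x$.

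The main obstacle is the bookkeeping of symbol classes uniformly in both $h$ and $R$: the symbol $b_{h,R}$ does not lie in a fixed $S^m$ class, and the semiclassical Calder\'on--Vaillancourt Theorem must be applied carefully to absorb the extra $(Rh^{1/2})^{-|\beta|}$ singularities into the $h^{|\beta|}$ gain. One must also verify that every lower-order contribution (including the anomalous $\widehat{B}_0 h^2$ term from \eqref{eq.commutator}, which is specific to the magnetic quantization) is still negligible under the double limit; this follows because each additional Poisson-bracket-like operation costs at most one extra $\xi$-derivative of $b_{h,R}$, and the associated factor $h/(Rh^{1/2}) = h^{1/2}/R$ is small.
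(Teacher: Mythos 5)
Your proposal is correct and follows essentially the same route as the paper: the test symbol $b_{h,R}$ is, up to the constant $4i\pi n$ (where $k=n\mathfrak{e}_\Lambda$) and a factor $h^{1/2}$, exactly the paper's symbol $s_{h,R}$ read in the $h$-quantization via the rescaling identity \eqref{e:change-semiclassical-parameter}, and the commutator/quasimode argument, the exactness of the bracket with $|\xi|^2$, the treatment of the magnetic and subprincipal corrections as $\mathcal{O}(R^{-1})+o_h(1)$, and the double limit are all as in the paper. The only cosmetic difference is that the paper rescales to the $h^{1/2}$-quantization so that its symbol is uniformly in $S^0$, whereas you absorb the $(Rh^{1/2})^{-|\beta|}$ losses directly through the $h^{|\beta|}$-weighted Calder\'on--Vaillancourt estimate, which amounts to the same bookkeeping.
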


As a direct corollary of this Lemma and of~\eqref{e:splitting-2microlocal-measure} and~\eqref{e:splitting-2microlocal-measure2}, one has
\begin{equation}\label{e:splitting-2microlocal-measure3}
 \mu|_{\T^2\times(\Lambda^{\perp}\setminus\{0\})}=\mathcal{I}_\Lambda(\mu_\Lambda)+\int_{\T^2}\mu^{\Lambda}(\dd x,\xi)|_{\T^2\times(\Lambda^{\perp}\setminus\{0\})}.
\end{equation}
In particular, in view of proving that $\mu|_{\T^2\times(\Lambda^{\perp}\setminus\{0\})}$ is independent of $x$, it only remains to analyze the properties of $\mathcal{I}_\Lambda(\mu_\Lambda)$ which will be the content of Lemmas~\ref{l:magnetic-wigner-2microlocal} and~\ref{l:invariance-2microlocal} below.

The first idea in order to prove Lemma \ref{lem.muup} will be to use the eigenvalue equation for $u_h$ in the form
\begin{equation}\label{eq.1110}
 \left\langle \left[ \Op \left(b (1-\chi_{Rh^{1/2},\Lambda})\right) , h^2 \mathscr{L}_\alpha \right] u_h,u_h \right\rangle = o(h^{\frac 32}),
 \end{equation}
with a well-chosen symbol $b$. One will notice that the cutoff does not belong to a nice class of symbols uniformly with respect to $h$. In fact, we will need this $h^{\frac 12}$ scaling in the later analysis of Section \ref{sec.compactpart}. However this power is critical in the sense that, when $R$ is very large, the composition rule for pseudodifferential operators will still apply in this case, so that \eqref{eq.1110} will give
\[\lim_{R \to \infty} \lim_{h\to 0} \left\langle \Op \left( \left\{ b(1-\chi), |\xi|^2 \right\} \right) u_h, u_h \right\rangle =0 .\]
We will chose $b$ such that $\lbrace b, |\xi|^2 \rbrace = e^{2i\pi k x} a(\xi)$, in which case we get the desired result
\[ \langle \mu^\Lambda,  e^{2i\pi k x} a(\xi) \rangle = 0. \]
Note that, for this argument to work we only use that the magnetic field is small enough (i.e. a subprincipal effect on the symbol). Also, in order to deal with singular symbols involving the cutoff function $\chi$, and to show that the composition rule applies, we find it useful to work with the rescaled quantization $\mathsf{Op}_{h^{\frac 12}}^B$.

\begin{proof}
 Let $k\in\Lambda\setminus \{0\}$ and $a\in\mathscr{C}^\infty_c(\R^2)$. We want to analyze the limit of $\langle w_{h,R}^{\Lambda},a(\xi)e^{2i\pi k\cdot x}\rangle$ as $h\rightarrow 0^+$ and $R\rightarrow \infty$ (in this order). To do this, we introduce the following symbol
\begin{equation}
s_{h,R}(x,\xi) = e^{2i\pi k\cdot x}\frac{a(h^{\frac 12}\xi)}{\xi\cdot\mathfrak{e}_\Lambda}\left(1-\chi\left(\frac{\xi\cdot\mathfrak{e}_\Lambda}{L_\Lambda R}\right)\right),
\end{equation}
which belongs to the class $\mathscr{S}^0$ uniformly with respect to $h$. Indeed, $\xi\cdot\mathfrak{e}_\Lambda$ is bounded from below by a constant so that the function is bounded uniformly and any derivative with respect to $(x,\xi)$ is bounded for the same reason. From the eigenvalue equation \eqref{e:semiclassical-quasimode} and the Calder\'on-Vaillancourt Theorem~\ref{thm.cv}, we know that
\begin{equation}
\left\langle \left[\mathsf{Op}_{h^{\frac 12}}^B \left(s_{h,R} \right),h^2\mathscr{L}_\alpha\right]u_h,u_h\right\rangle =o\left(h^{\frac32}\right).
\end{equation}
We write $\mathscr{L}_\alpha$ in $h^{1/2}$-quantization,
\begin{align*}
h^2\mathscr{L}_\alpha& = \Op(|\xi |^2+ h c_1(x)\cdot\xi+h^2c_0(x)) \\
&= h \mathsf{Op}_{h^{\frac 12}}^B( | \xi |^2 + h^{\frac{1}{2}} c_1(x) \cdot \xi + h c_0(x)),
\end{align*}
and divide by $h$ to find
\begin{equation}\label{eq.testup}
\left\langle \left[\mathsf{Op}_{h^{\frac 12}}^B \left(s_{h,R} \right),\mathsf{Op}_{h^{\frac 12}}^B(| \xi |^2 + h^{\frac{1}{2}} c_1(x) \cdot \xi + h c_0(x))\right]u_h,u_h\right\rangle =o\big(h^{\frac12}\big).
\end{equation}
We study the symbol of the commutator using \eqref{eq.commutator} with semiclassical parameter $h^{\frac 12}$, and we deal with each term one by one. First of all, $s_{h,R}$ and $c_0$ belong to the symbol class $\mathscr{S}^0$ uniformly with respect to $h$. Thus,
\begin{equation}\label{eq.tests1}
 \left[\mathsf{Op}_{h^{\frac 12}}^B \left(s_{h,R} \right),\mathsf{Op}_{h^{\frac 12}}^B( h c_0(x))\right] = \mathsf{Op}_{h^{\frac 12}}^B(\sigma_{h,R}^1), \quad {\rm{with}} \quad \sigma_{h,R}^1 \in h^{\frac 32} \mathscr{S}^0.
\end{equation}
For the second term, note that $s_{h,R}$ is not uniformly in $\mathscr{S}^{-1}$, but $h^{\frac 12} s_{h,R}$ is. This follows from the facts that the support properties of $s_{h,R}$ ensures that $h^{\frac{1}{2}}\xi$ is bounded and that all the derivaties of $s_{h,R}$ are uniformly bounded. Since $c_1(x) \cdot \xi \in \mathscr{S}^1$ we deduce
\begin{equation}\label{eq.tests2}
 \left[\mathsf{Op}_{h^{\frac 12}}^B \left(s_{h,R} \right),\mathsf{Op}_{h^{\frac 12}}^B( h^{\frac{1}{2}} c_1(x) \cdot \xi )\right] = \mathsf{Op}_{h^{\frac 12}}^B(\sigma_{h,R}^2), 
 \end{equation}
 with
 $$\sigma_{h,R}^2 = \frac{h^{\frac 12}}{i} \lbrace s_{h,R}, h^{\frac 12} c_1(x) \cdot \xi \rbrace + h \mathscr{S}^0.
$$
Similarily, $h s_{h,R}$ is in the class $\mathscr{S}^{-2}$ and $|\xi|^2 \in \mathscr{S}^2$ so that
\begin{align} \nonumber
 &\left[\mathsf{Op}_{h^{\frac 12}}^B \left(s_{h,R} \right),\mathsf{Op}_{h^{\frac 12}}^B( |\xi|^2 )\right] = \mathsf{Op}_{h^{\frac 12}}^B(\sigma_{h,R}^3), \\ &\quad {\rm{with}} \quad \sigma_{h,R}^3 = \frac{h^{\frac12}}{i} \lbrace s_{h,R}, |\xi|^2  \rbrace + \frac{2\widehat{B}_0 h}{i}\big( \xi_1 \partial_{\xi_2} s_{h,R} - \xi_2 \partial_{\xi_1} s_{h,R}  \big),\label{eq.tests3}
\end{align}
where the remainder is $0$ thanks to the last item of Theorem~\ref{thm.comp} and due to the fact that $\|\xi\|^2$ is quadratic and independent of $x$. We insert the estimates \eqref{eq.tests1}, \eqref{eq.tests2} and \eqref{eq.tests3} in \eqref{eq.testup} together with the Calder\'on-Vaillancourt Theorem to deduce
\begin{multline}\label{eq.test+}
\Big\langle \mathsf{Op}_{h^{\frac 12}}^B \Big( \frac{h^{\frac12}}{i} \lbrace s_{h,R}, |\xi|^2  \rbrace + \frac{2\widehat{B}_0 h}{i}\big( \xi_1 \partial_{\xi_2} s_{h,R} - \xi_2 \partial_{\xi_1} s_{h,R}  \big) + \frac{h}{i} \lbrace s_{h,R}, c_1(x) \cdot \xi \rbrace \Big) u_h,u_h\Big\rangle \\
=o(h^{\frac 12}).
\end{multline}
We calculate the remaning symbols and find that
\begin{equation*}
\frac{2\widehat{B}_0 h^{\frac12}}{i}\big( \xi_1 \partial_{\xi_2} (h^{\frac12}s_{h,R}) - \xi_2 \partial_{\xi_1} (h^{\frac12}s_{h,R})  \big) + \frac{h^{\frac12}}{i} \lbrace h^{\frac12}s_{h,R}, c_1(x) \cdot \xi \rbrace = \mathscr{O}(h^{\frac 12} R^{-1}) \quad {\rm{in }} \, \mathscr{S}^0.
\end{equation*}
Therefore when we divide \eqref{eq.test+} by $h^{\frac 12}$ we find
\begin{equation}\label{eq.testup2}
\lim_{R \to \infty} \lim_{h \to 0} \Big\langle \mathsf{Op}_{h^{\frac 12}}^B \Big( \frac{1}{i} \lbrace s_{h,R}, |\xi|^2  \rbrace \Big) u_h,u_h\Big\rangle = 0.
\end{equation}
The symbol $s_{h,R}$ was chosen such that
\[\lbrace s_{h,R} , | \xi |^2 \rbrace = - \frac{4i\pi | k |}{L_\Lambda} e^{2i\pi k\cdot x} a(h^{\frac 12} \xi) \left(1-\chi\left(\frac{\xi\cdot\mathfrak{e}_\Lambda}{L_\Lambda R}\right)\right),  \]
and thus
\begin{multline*}
 \frac{ L_\Lambda}{4i\pi |k|}\mathsf{Op}_{h^{\frac 12}}^B \left(\lbrace s_{h,R} , | \xi |^2 \rbrace \right)= \mathsf{Op}_{h^{\frac 12}}^B \Big( e^{ikx} a(h^{\frac 12} \xi) \Big(1-\chi\Big(\frac{\xi\cdot\mathfrak{e}_\Lambda}{L_\Lambda R}\Big) \Big) \\= \Op \Big( e^{ikx} a(\xi) \Big(1-\chi_{Rh^{\frac 12},\Lambda}(\xi) \Big) \Big).
\end{multline*}
Therefore equation \eqref{eq.testup2} becomes
\[ \langle \mu^{\Lambda}, e^{ikx} a(\xi) \rangle = \lim_{R \to \infty} \lim_{h \to 0^+} \langle w_{h,R}^\Lambda, e^{ikx}a(\xi) \rangle = 0,\]
which is the expected result.
\end{proof}

\subsection{Analyzing the compact part}\label{sec.compactpart}

 In this section we prove that $\mu_\Lambda$ vanishes, under our geometric control assumption on the magnetic field. Recall from Remark~\ref{rem.Binfty} that $\mathcal{I}_\Lambda(B)$ is independent of $\xi$ and that $\mathcal{I}_\Lambda(B)(x)=B_\infty(x,\theta)$ when $\theta\in\Lambda^\perp\cap\mathbf{S}^1$. 
 
\begin{proposition}\label{prop.muloc}
Let $\mu$ be any accumulation point of $(w_h)_{h \to 0}$, and consider the associated measure $\mu_\Lambda$ given by Lemma \ref{l:magnetic-wigner-split}. If $\mathcal I_\Lambda(B) >0$ everywhere, then $\mu_\Lambda=0$.
\end{proposition}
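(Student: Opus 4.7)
The plan is to promote $\mu_\Lambda$ to a measure $M_\Lambda$ on a rescaled phase space $\T^2 \times (\Lambda^\perp \cap \mathbf{S}^1) \times \R$, the extra factor being the 2-microlocal variable $\eta$ obtained by rescaling the component of $\xi$ transverse to $\Lambda^\perp$ at scale $h^{1/2}$, and to show that $M_\Lambda$ is invariant by a vector field whose ``transverse force'' is exactly $\mathcal{I}_\Lambda(B)$. The geometric control hypothesis then kills any finite invariant measure by a drift-to-infinity argument, giving $\mu_\Lambda = 0$ as an appropriate projection of $M_\Lambda$.

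To implement this, I would test the approximate eigenvalue equation against the commutator $[h^2 \mathscr{L}_\alpha, \Op(a \chi_{Rh^{1/2}, \Lambda})]$, which is of size $o(h^{3/2})$ by~\eqref{e:semiclassical-quasimode}. By Lemma~\ref{lem.muup} and~\eqref{e:splitting-2microlocal-measure3}, it suffices to restrict attention to symbols $a$ whose nonzero Fourier coefficients $\widehat a_k$ lie in $\Lambda$. The key observation is that, for such $k$ and for $\xi$ in the support of $\chi_{Rh^{1/2},\Lambda}$, one has $k\cdot\xi = O(Rh^{1/2})$ since $\xi$ is close to $\Lambda^\perp$. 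Thus the principal Poisson bracket $\{|\xi|^2,a\} = 2\xi\cdot\partial_x a$ is itself of size $h^{1/2}$ on this support, so the effective equation arises only at order $h^{3/2}$. Using Corollary~\ref{coro.comp.quadratic} for the exact commutator with $|\xi|^2$ (which produces a quadratic drift in the rescaled momentum $\eta$) and the subprincipal contribution $(h^2/i)\{c_1(x)\cdot\xi,a\}$ from the gauge term in~\eqref{e:symbol-laplacian}, an integration by parts along the closed orbits in the $\Lambda^\perp$ direction collapses the cross derivatives of $c_1$ to the averaged magnetic field $\mathcal{I}_\Lambda(B)$ (cf.\ Remark~\ref{rem.Binfty}).

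Dividing by $h^{3/2}$ and sending first $h \to 0^+$ and then $R \to +\infty$, this yields an invariance of the form
\[
X_\Lambda \cdot M_\Lambda = 0, \qquad X_\Lambda = 2\eta\,\partial_{x_\parallel} - \mathcal{I}_\Lambda(B)(x_\parallel)\,\partial_\eta,
\]
where $x_\parallel = x\cdot\mathfrak{e}_\Lambda/L_\Lambda$ is the coordinate transverse to the closed orbits, and $M_\Lambda$ is a finite positive Borel measure on $\T^2\times(\Lambda^\perp\cap\mathbf{S}^1)\times\R$ whose projection onto the first two factors recovers $\mathcal{I}_\Lambda(\mu_\Lambda)$. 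The hypothesis $\mathcal{I}_\Lambda(B)>0$ forces $c_B := \inf_{\T^2}\mathcal{I}_\Lambda(B) > 0$, so along every integral curve of $X_\Lambda$ one has $\eta(t) \le \eta(0) - c_B t$. Consequently, for any compact $K\subset\R$ and $T$ large enough, the images under the flow of $X_\Lambda$ at times $nT$ of $\T^2\times(\Lambda^\perp\cap\mathbf{S}^1)\times K$, for $n \ge 0$, are pairwise disjoint (their $\eta$-ranges drift uniformly to $-\infty$) and all have the same $M_\Lambda$-mass by invariance. Finiteness of $M_\Lambda$ forces this common mass to vanish; letting $K$ exhaust $\R$ gives $M_\Lambda=0$, and then $\mathcal{I}_\Lambda(\mu_\Lambda)=0$. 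Since $\mu_\Lambda$ is geodesic-flow-invariant and supported on $\Lambda^\perp\cap\mathbf{S}^1$, Lemma~\ref{l:anantharamanmacia} yields $\mu_\Lambda = \mathcal{I}_\Lambda(\mu_\Lambda) = 0$.

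The main obstacle I anticipate is the derivation of the invariance itself: correctly setting up the 2-microlocal lift $M_\Lambda$ with uniform finiteness as $R\to\infty$, and identifying the coefficient of $\partial_\eta$ as $\mathcal{I}_\Lambda(B)$ rather than as some gauge-dependent or derivative-of-primitive quantity. This should rely on the geometrically invariant character of the magnetic Weyl quantization of Section~\ref{s:quantization}, which ensures that all higher order terms in the composition formula~\eqref{eq.commutator} are expressed directly in terms of $B$, together with careful tracking of the $O(h^2)$ magnetic correction there, which contributes at the same order as the subprincipal Poisson bracket.
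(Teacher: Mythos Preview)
Your proposal is correct and follows essentially the same route as the paper. The paper introduces the two-microlocal lift $\tilde\mu_\Lambda$ (your $M_\Lambda$) via Lemma~\ref{l:magnetic-wigner-2microlocal}, establishes in Lemma~\ref{l:invariance-2microlocal} its invariance under $X_{\Lambda,\pm}=\eta\,\tfrac{\mathfrak{e}_\Lambda}{L_\Lambda}\cdot\partial_x\pm\mathcal{I}_\Lambda(B)\,\partial_\eta$ by the same commutator computation you outline, and concludes by the same escape-to-infinity argument; the only cosmetic difference is that the paper keeps track of the two branches $\xi=\pm\mathfrak{e}_\Lambda^\perp/L_\Lambda$ separately (hence the $\pm$ sign in front of $\mathcal{I}_\Lambda(B)$), and replaces the gauge-dependent term by $\mathcal{I}_\Lambda(B)$ via the geodesic-flow invariance of $\tilde\mu_\Lambda$ rather than by an explicit integration by parts---which is the same averaging mechanism you anticipate.
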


To analyze this part of the measure and prove Proposition \ref{prop.muloc}, we can implement the two-microlocal techniques. The idea is to artificially add the one-dimensional variable $\eta = \frac{\xi\cdot\mathfrak{e}_{\Lambda}}{L_\Lambda Rh^{\frac12}}$, and to analyze the quantum limits for symbols $a(x,\xi,\eta)$, depending also on the new variable $\eta$. More precisely, we introduce the two-microlocal distribution:
$$
\mu_{h,R,\Lambda}: a\in\mathscr{C}^\infty_c(T^*\T^2\times \R)\mapsto \left\langle w_h,a\left(x,\xi,\frac{\xi\cdot\mathfrak{e}_{\Lambda}}{L_\Lambda h^{\frac12}}\right)\chi\left(\frac{\xi\cdot\mathfrak{e}_{\Lambda}}{L_\Lambda Rh^{\frac12}}\right)\right\rangle.
$$

Mimicking the proof of Lemma~\ref{l:magnetic-wigner-split}, we can verify that accumulation points of $(\mu_{h,R,\Lambda})$ satisfy the same invariance properties as $\mu$.

\begin{lemma}\label{l:magnetic-wigner-2microlocal}  Let $(u_h)_{h\rightarrow 0^+}$ be a sequence verifying~\eqref{e:semiclassical-quasimode}. Then the following holds:
\begin{enumerate}
 \item The corresponding sequence $(\mu_{h,R,\Lambda})_{h\rightarrow 0^+,R\rightarrow +\infty}$ is bounded in $\mathscr{D}'(T^*\T^2 \times \R)$. In particular, up to extraction, the sequence $(\mu_{h,R,\Lambda})_{h\rightarrow 0^+,R\rightarrow +\infty}$ converges in $\mathscr{D}'(T^*\T^2 \times \R)$ to some $\tilde{\mu}_\Lambda$ as $h\rightarrow 0^+$ and $R\rightarrow+\infty$ in this order.
 \item Any accumulation point $\tilde{\mu}_\Lambda$ (as $h\rightarrow 0^+$ and $R\rightarrow+\infty$ in this order) of $(\mu_{h,R,\Lambda})_{h\rightarrow 0^+,R\rightarrow +\infty}$ is a finite measure on $\T^2\times(\Lambda^{\perp}\cap\mathbf{S}^1)\times\R$,
which is invariant by the lifted geodesic flow
$$
\tilde{\varphi}^{t}(x,\xi,\eta)=(x+t\xi,\xi,\eta);
$$
\item $\mu_{\Lambda}(x,\xi)=\int_\R\tilde{\mu}_\Lambda(x,\xi,d\eta)$.
\end{enumerate} 
\end{lemma}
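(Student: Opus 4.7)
My plan is to adapt the proof of Lemma~\ref{l:magnetic-wigner-split} directly, using as basic test symbol
\[a_{h,R}(x,\xi) := a\!\left(x,\xi,\frac{\xi\cdot\mathfrak{e}_\Lambda}{L_\Lambda h^{1/2}}\right)\chi\!\left(\frac{\xi\cdot\mathfrak{e}_\Lambda}{L_\Lambda R h^{1/2}}\right),\]
so that by construction $\langle\mu_{h,R,\Lambda}, a\rangle = \langle\Op(a_{h,R})u_h, u_h\rangle$. The crucial bookkeeping point throughout is that every $\xi$-derivative of $a_{h,R}$ costs at most a factor $h^{-1/2}$, whether it hits the $\eta$-slot of $a$ or the cutoff $\chi$, whereas $x$-derivatives remain uniformly bounded.

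For Item 1 and the measure/support parts of Item 2, I would proceed as follows. By the Calder\'on--Vaillancourt Theorem~\ref{thm.cv}, the scaling $\|\partial_x^\gamma\partial_\xi^\beta a_{h,R}\|_\infty = O(h^{-|\beta|/2})$ is exactly compensated by the weight $h^{|\beta|}$ of that theorem, so $\|\Op(a_{h,R})\|_{L^2\to L^2} = O(1)$ uniformly in $h$ and $R$, which gives the boundedness. Positivity of any weak-$\star$ accumulation point follows from the standard $a_\varepsilon := \sqrt{a+\varepsilon}$ trick, for which Theorem~\ref{thm.comp} applied to $\Op(a_\varepsilon)^*\Op(a_\varepsilon)$ gives $\|\Op(a_\varepsilon)u_h\|^2 = \langle\Op(a+\varepsilon)u_h,u_h\rangle + o(1)$, since each higher term in the Moyal expansion carries at least one $\xi$-derivative whose $h^{-1/2}$ cost is overcompensated by its $h$-prefactor. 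Support on $\{|\xi|=1\}$ is obtained from~\eqref{e:quantization-magnetic-laplacian} and the quasimode equation~\eqref{e:semiclassical-quasimode} exactly as in the proof of Lemma~\ref{l:magnetic-wigner}, and support in the $\xi$-direction on $\Lambda^\perp$ is forced by $\mathrm{supp}\,\chi$, which restricts to $|\xi\cdot\mathfrak{e}_\Lambda|\leq 2L_\Lambda R h^{1/2}\to 0$.

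For the geodesic-flow invariance in Item 2, I will compute $\langle[\Op(a_{h,R}), h^2\mathscr{L}_\alpha]u_h,u_h\rangle$: the quasimode equation~\eqref{e:semiclassical-quasimode} combined with the $O(1)$ norm bound from Item 1 forces this quantity to be $o(h^{3/2})$. Applying Theorem~\ref{thm.comp} to the decomposition~\eqref{e:quantization-magnetic-laplacian}, the leading contribution to this commutator equals $\frac{2h}{i}\Op(\xi\cdot\partial_x a_{h,R})$, and I will argue that every other term in the expansion is $O(h^{3/2})$ in operator norm. Indeed, such terms involve either an extra $h$-factor from the subprincipal symbols $hc_1(x)\cdot\xi$ and $h^2c_0(x)$ combined only with bounded $\partial_x a_{h,R}$, or a factor $\partial_\xi a_{h,R} = O(h^{-1/2})$ that is absorbed by a compensating $h^2$-prefactor coming either from the Poisson bracket with the subprincipal symbols or from the $h^2\widehat{B}_0$-term in~\eqref{eq.commutator}. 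Dividing by $h$ and letting first $h\to 0^+$ and then $R\to\infty$ yields $\langle\tilde\mu_\Lambda, \xi\cdot\partial_x a\rangle = 0$, which is the desired invariance under $\tilde\varphi^t$. For Item 3, taking $a(x,\xi,\eta) = b(x,\xi)$ independent of $\eta$ reduces $a_{h,R}$ to $b(x,\xi)\chi(\xi\cdot\mathfrak{e}_\Lambda/(L_\Lambda R h^{1/2}))$, so $\langle\mu_{h,R,\Lambda},a\rangle = \langle w_{h,R,\Lambda},b\rangle$ exactly, and passing to the limit shows that the $\eta$-marginal of $\tilde\mu_\Lambda$ coincides with $\mu_\Lambda$.

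The main obstacle will be the careful bookkeeping of the $h^{-1/2}$ losses from $\xi$-derivatives of $a_{h,R}$ in the various composition formulas, verifying that every term beyond the leading Poisson bracket with $|\xi|^2$ is genuinely $o(h)$ after the rescaling. The structural reason this works is twofold: the subprincipal symbols in~\eqref{e:quantization-magnetic-laplacian} come with extra $h$-factors that precisely absorb the scaling losses, and the principal symbol $|\xi|^2$ is quadratic and $x$-independent, so that by Corollary~\ref{coro.comp.quadratic} its commutator with $\Op(a_{h,R})$ truncates exactly at the Poisson-bracket level, with no parasitic $\partial_\xi$ contributions.
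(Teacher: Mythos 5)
Your proposal is correct and follows essentially the same route as the paper, which itself only sketches this lemma by referring back to the proof of Lemma~\ref{l:magnetic-wigner-split}: the key points (uniform $L^2$-bounds from the $h^{|\beta|}$-weighted Calder\'on--Vaillancourt estimate, positivity via $\sqrt{a+\varepsilon}$, support forced by ellipticity and by the cutoff, and invariance from the commutator with $h^2\mathscr{L}_\alpha$ using the exact formula of Corollary~\ref{coro.comp.quadratic} for the $|\xi|^2$ part) are exactly those the paper uses. The only cosmetic difference is that you track the $h^{-1/2}$ loss per $\xi$-derivative directly in the $h$-quantization, whereas the paper rescales to the $h^{1/2}$-quantization via~\eqref{e:change-semiclassical-parameter} so that all symbols become uniformly bounded; by that identity the two bookkeepings are equivalent.
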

Observe from the invariance by the geodesic flow that \[\langle \tilde{\mu}_\Lambda,e^{ik\cdot x}a(\xi,\eta)\rangle =0, \quad k \notin \Lambda, \quad a\in\mathscr{C}^\infty_c(\R^2 \times \R).\] Therefore, we can replace $\tilde{\mu}_\Lambda$ by its average,
\begin{equation}\label{e:Fourier-coeff-2microlocal}
\tilde{\mu}_\Lambda=\mathcal{I}_\Lambda(\tilde{\mu}_\Lambda).
\end{equation}
Also, from the support properties of $\tilde{\mu}_\Lambda$, we can split this measure as
$$
\tilde{\mu}_\Lambda^\pm:=\tilde{\mu}_\Lambda|_{\T^2\times\left\{\pm\mathfrak{e}_\Lambda^\perp/L_\Lambda\right\}\times\R}.
$$

Our last step consists in determining an extra-regularity property for the measure $\tilde{\mu}_\Lambda$.

\begin{lemma}\label{l:invariance-2microlocal} Let $\tilde{\mu}_\Lambda$ be an accumulation point as in Lemma~\ref{l:magnetic-wigner-2microlocal}. Then, one has, for every $a\in\mathscr{C}^\infty_c(T^*\T^2\times\R)$,
$$
\left\langle \tilde{\mu}_\Lambda^{\pm},\eta \frac{\mathfrak{e}_\Lambda}{L_\Lambda}\cdot\partial_x\mathcal{I}_\Lambda(a) \pm \mathcal{I}_\Lambda(B)\partial_\eta \mathcal{I}_\Lambda(a) \right\rangle=0.
$$
\end{lemma}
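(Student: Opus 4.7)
Plan: The strategy is to test the eigenvalue equation~\eqref{e:semiclassical-quasimode} against a carefully designed second-microlocal symbol. For $a\in\mathscr{C}^\infty_c(T^*\T^2\times\R)$, let $\eta_h := (\xi\cdot\mathfrak{e}_\Lambda)/(L_\Lambda h^{1/2})$ and set $S_h(x,\xi) := a(x,\xi,\eta_h)\chi(\eta_h/R)$. Since $(h^2\mathscr{L}_\alpha - 1)u_h = o_{L^2}(h^{3/2})$, we have $\langle [\Op(S_h), h^2\mathscr{L}_\alpha]u_h, u_h\rangle = o(h^{3/2})$, and the task reduces to extracting the $h^{3/2}$-content of this commutator.

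Because $a\star a = a^2$ whenever $a$ is affine in $\xi$ (the Taylor expansion in Theorem~\ref{thm.comp} terminates), one has $h^2\mathscr{L}_\alpha = \Op(b)$ with $b(x,\xi) = |\xi|^2 + hc_1(x)\cdot\xi + h^2c_0(x)$ and $c_1 = -2(A^{\mathrm{per}}+\alpha)$. Although $\|\partial_\xi^\beta S_h\|_\infty = O(h^{-|\beta|/2})$, only odd-$n$ terms survive in the expansion of $a\star b - b\star a$ (by antisymmetry of $\Omega_{hB}$ under the swap of its arguments), so the first neglected term is of order $h^3\cdot h^{-3/2} = h^{3/2}$ on $S_h$, and
\begin{equation*}
[\Op(S_h), h^2\mathscr{L}_\alpha] = \frac{h}{i}\Op(\{S_h, b\}) + \frac{h^2\widehat{B}_0}{i}\Op\bigl(\{S_h, b\}_{\mathrm{mag}}\bigr) + O_{L^2}(h^{3/2}).
\end{equation*}
Decompose $\xi = h^{1/2}\eta_h(\mathfrak{e}_\Lambda/L_\Lambda) + \beta(\mathfrak{e}_\Lambda^\perp/L_\Lambda)$ with $\beta := \xi\cdot\mathfrak{e}_\Lambda^\perp/L_\Lambda$. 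The order-$h$ piece of $\{S_h, b\} = -2\xi\cdot\partial_x S_h + \ldots$ is $-2\beta(\mathfrak{e}_\Lambda^\perp/L_\Lambda)\cdot\partial_x a$, which vanishes identically once $a$ is replaced by $\mathcal{I}_\Lambda(a)$ since $k\cdot\mathfrak{e}_\Lambda^\perp = 0$ for $k\in\Lambda$. The order-$h^{3/2}$ content has three contributions: from the subprincipal part of $-2\xi\cdot\partial_x S_h$, the term $-2\eta_h(\mathfrak{e}_\Lambda/L_\Lambda)\cdot\partial_x a$; from $\partial_\xi S_h\cdot h\partial_x(c_1\cdot\xi)$ through the $h^{-1/2}(\mathfrak{e}_\Lambda/L_\Lambda)\partial_\eta s$ part of $\partial_\xi S_h$, the term $(1/L_\Lambda)\partial_{\mathfrak{e}_\Lambda}(c_1\cdot\xi)\partial_\eta a$; and from the magnetic bracket $2\xi^\perp\cdot\partial_\xi S_h$, the term $2\widehat{B}_0(\xi^\perp\cdot\mathfrak{e}_\Lambda/L_\Lambda)\partial_\eta a$.

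Dividing by $h^{3/2}$ and passing to the limit (first $h\to 0^+$, then $R\to\infty$) yields an identity on $\tilde\mu_\Lambda$. Restricting to $\tilde\mu_\Lambda^\pm$ -- where $\xi = \pm\mathfrak{e}_\Lambda^\perp/L_\Lambda$, hence $\xi^\perp\cdot\mathfrak{e}_\Lambda/L_\Lambda = \mp 1$ -- and using $\tilde\mu_\Lambda^\pm = \mathcal{I}_\Lambda(\tilde\mu_\Lambda^\pm)$ to replace the $x$-dependent Poisson coefficient by its $\mathcal{I}_\Lambda$-average, the 2D curl identity $(\mathfrak{e}_\Lambda\cdot\partial_x)(A^{\mathrm{per}}\cdot\mathfrak{e}_\Lambda^\perp) - (\mathfrak{e}_\Lambda^\perp\cdot\partial_x)(A^{\mathrm{per}}\cdot\mathfrak{e}_\Lambda) = L_\Lambda^2 B^{\mathrm{per}}$ combined with the vanishing of $\mathfrak{e}_\Lambda^\perp$-directional derivatives under $\mathcal{I}_\Lambda$ gives $\mathcal{I}_\Lambda\bigl[(1/L_\Lambda^2)\partial_{\mathfrak{e}_\Lambda}(c_1\cdot\mathfrak{e}_\Lambda^\perp)\bigr] = -2\mathcal{I}_\Lambda(B^{\mathrm{per}})$. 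Adding $\mp 2\widehat{B}_0$ from the magnetic bracket, the coefficient of $\partial_\eta\mathcal{I}_\Lambda(a)$ reconstructs $\mp 2\mathcal{I}_\Lambda(B)$; dividing by $-2$ yields the stated identity. The main obstacle is the bookkeeping for the $O(h^{3/2})$ remainder when $S_h$ has $h$-dependent symbol seminorms, together with the delicate splitting whereby $\mathcal{I}_\Lambda(B)$ -- not merely $\widehat{B}_0$ -- emerges by combining the magnetic bracket with the projected subprincipal Poisson contribution.
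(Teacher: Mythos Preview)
Your approach is essentially the paper's: test the quasimode equation against the second-microlocal symbol, isolate the order-$h^{3/2}$ content of the commutator, and recombine the magnetic bracket with the projected subprincipal Poisson term via the curl identity so that $\mathcal I_\Lambda(B)$ (not just $\widehat B_0$) appears in front of $\partial_\eta$. The paper does exactly this computation, assuming $a=\mathcal I_\Lambda(a)$ from the start and writing the same three contributions. The only structural difference is that the paper rescales to the $h^{1/2}$-quantization via~\eqref{e:change-semiclassical-parameter}, setting $s_h(x,\xi)=a(x,h^{1/2}\xi,\xi\cdot\mathfrak e_\Lambda/L_\Lambda)\chi(\xi\cdot\mathfrak e_\Lambda/(L_\Lambda R))$, which lies \emph{uniformly} in $S^0$; then Theorems~\ref{thm.comp} and~\ref{thm.cv} apply with $h^{1/2}$ as semiclassical parameter and all remainder constants are $h$-independent. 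Your choice to stay in $h$-quantization with $S_h$ is equivalent in spirit but forces you to track $h$-dependent seminorms by hand.

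There is one genuine (though easily repairable) gap. You claim the first neglected odd term is ``of order $h^3\cdot h^{-3/2}=h^{3/2}$'' and carry an $O_{L^2}(h^{3/2})$ remainder, then divide by $h^{3/2}$ and take a limit. An $O(h^{3/2})$ remainder survives as $O(1)$ after division, which is not $o(1)$. The fix is that your estimate is far too pessimistic: for the principal part $|\xi|^2$ the commutator formula is \emph{exact} (Corollary~\ref{coro.comp.quadratic}), since $|\xi|^2$ is quadratic in $\xi$ and $x$-independent, so every term with $n\geq 2$ in the $\Omega_{hB}$-expansion vanishes identically and there is no remainder at all from this piece. For the subprincipal pieces $hc_1(x)\cdot\xi$ and $h^2c_0(x)$ the extra powers of $h$ push the $n=3$ contribution to $O(h^{5/2})$ or better. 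Thus the true remainder is $o(h^{3/2})$ and your limiting argument goes through. The paper's rescaling makes this transparent without the case analysis.
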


Before proving Lemma \ref{l:invariance-2microlocal}, let us explain how to deduce Proposition \ref{prop.muloc}.

\begin{proof}[Proof of Proposition \ref{prop.muloc}] Lemma~\ref{l:magnetic-wigner-2microlocal} together with~\eqref{e:Fourier-coeff-2microlocal} ensure that $\tilde{\mu}_\Lambda$ is a measure on $\T^2\times\{\pm\mathfrak{e}_\Lambda^\perp/L_\Lambda\}\times\R$ with only Fourier coefficients along $\Lambda$. Hence, it can be identified with the two measures $\tilde \mu_\Lambda^\pm$ and we would like to prove that they are identically $0$. This follows from the facts that they are finite measures and that they are invariant by a flow all of whose trajectories escape to infinity. Indeed, thanks to Lemma~\ref{l:invariance-2microlocal}, these measures are invariant by one of the flows $\phi_{\Lambda,\pm}^t$ generated by
$$
X_{\Lambda,\pm}:=\eta \frac{\mathfrak{e}_\Lambda}{L_\Lambda}\cdot\partial_x \pm\mathcal{I}_\Lambda(B)\partial_\eta.
$$
All the trajectories of this flow escape to infinity, as soon as $\mathcal{I}_\Lambda(B)>0$ everywhere. Indeed, for every trajectory, one has $\eta(t)=\eta(0)\pm\int_{0}^t\mathcal{I}_\Lambda(B)(x(\tau))d\tau\rightarrow\pm\infty$. As $\tilde{\mu}_\Lambda^\pm$ is an invariant finite measure, it is necessarly $0$ as the mass in a compact part has to be preserved under the flow.
\end{proof}

It remains to prove Lemma \ref{l:invariance-2microlocal}, and this is where we will extract the subleading effect of the magnetic field. The first idea is again to use the eigenvalue equation in the form
\begin{equation}\label{eq.commutator17}
\left\langle \left[ \Op \left(a\, \chi_{Rh^{\frac 12},\Lambda} \right) , h^2 \mathscr{L}_\alpha \right] u_h,u_h \right\rangle = o(h^{\frac 32}),
\end{equation}
and to the get information from the subprincipal terms of this commutator. We recall the symbol of $h^2 \mathscr{L}_\alpha$, calculated in \eqref{e:symbol-laplacian},
\begin{equation}
h^2\mathscr{L}_\alpha =\Op\left(|\xi |^2+hc_1(x)\cdot\xi+h^2c_0(x)\right),
\end{equation}
where
\begin{equation}\label{e:expression-subprincipal-symbol}
c_1(x):=-2(A^{\text{per}}(x)+\alpha),\quad \text{and}\quad c_0(x):=|A^{\text{per}}(x)+\alpha|^2=\frac{|c_1(x)|^2}{4}.
\end{equation}
In equation \eqref{eq.commutator17}, we can use the invariance of $\tilde \mu_\Lambda^{\pm}$ to replace $a$ by its average along the geodesics, i.e. $a= \mathcal I_\Lambda(a)$. For such $b$'s the main term of the commutator will simplify, since the Poisson bracket was precisely encoding the invariance by the geodesic flow. In fact, there will be two terms of next order, of the form
$$
\left[\Op (a\, \chi), h^2\mathscr{L}\right]\simeq\frac{2h^{\frac{3}{2}}}{i}\Op\left( \left(\eta \frac{\mathfrak{e}_\Lambda}{L_\Lambda}\cdot\partial_xa +\frac{\xi\cdot\mathfrak{e}_{\Lambda}^{\perp}}{L_\Lambda}\mathcal{I}_\Lambda(B)\partial_\eta a \right) \chi \right).
$$
This estimate, together with \eqref{eq.commutator17}, directly gives the result of Lemma \ref{l:invariance-2microlocal}. Let us now proceed with the details of the proof. As before, we find useful to work with the rescaled quantization $\mathsf{Op}_{h^{\frac 12}}^B$ in order to deal with the singular symbol $\chi_{Rh^{\frac 12},\Lambda}$.

\begin{proof}[Proof of Lemma \ref{l:invariance-2microlocal}] Let $a\in\mathscr{C}^\infty_c(T^*\T^2\times\R)$ be such that $\mathcal{I}_\Lambda(a)=a$, \emph{i.e.} $a$ has only Fourier coefficients along $\Lambda$. We introduce the symbol $s_h$, which belongs to the symbol class $\mathscr{S}^0$ uniformly with respect to $h$,
\[s_h(x,\xi) = a\left(x,h^{\frac 12} \xi,\frac{\xi\cdot\mathfrak{e}_{\Lambda}}{L_\Lambda}\right)\chi\left(\frac{\xi\cdot\mathfrak{e}_{\Lambda}}{L_\Lambda R}\right), \]
and which satisfies
\[ \mathsf{Op}_{h^{\frac 12}}^B (s_h) = \Op \left(a\left(x,\xi,\frac{\xi\cdot\mathfrak{e}_{\Lambda}}{L_\Lambda h^{\frac12}}\right)\chi\left(\frac{\xi\cdot\mathfrak{e}_{\Lambda}}{L_\Lambda Rh^{\frac12}}\right)\right). \]
Again, we write the quasimode equation~\eqref{e:semiclassical-quasimode},
$$
\left\langle\left[ \mathsf{Op}_{h^{\frac 12}}^B (s_h) ,h^2\mathscr{L}_\alpha\right]u_h,u_h\right\rangle =o(h^{\frac32}).
$$
Recall the symbol of $\mathscr{L}_\alpha$,
\[ h^2\mathscr{L}_\alpha = h \mathsf{Op}_{h^{\frac 12}}^B( | \xi |^2 + h^{\frac{1}{2}} c_1(x) \cdot \xi + h c_0(x)),\]
from which we deduce
\begin{equation}\label{eq.testdown}
\left\langle \left[\mathsf{Op}_{h^{\frac 12}}^B \left(s_{h} \right),\mathsf{Op}_{h^{\frac 12}}^B(| \xi |^2 + h^{\frac{1}{2}} c_1(x) \cdot \xi + h c_0(x))\right]u_h,u_h\right\rangle =o\big(h^{\frac12}\big).
\end{equation}
As in the proof of Lemma \ref{lem.muup}, we check the contribution of each term one by one, using the composition rule Theorem \ref{thm.comp}. First, since $c_0(x)$ and $s_h$ belong to $\mathscr{S}^0$ uniformly with respect to $h$ we have
\begin{equation}\label{eq.testdown1}
 \left[\mathsf{Op}_{h^{\frac 12}}^B \left(s_h \right),\mathsf{Op}_{h^{\frac 12}}^B( h c_0(x))\right] = \mathsf{Op}_{h^{\frac 12}}^B(\sigma_{h}^1), \quad {\rm{with}} \quad \sigma_{h}^1 \in h^{\frac 32} \mathscr{S}^0.
\end{equation}
For the second term, note that $c_1(x) \cdot \xi \in \mathscr{S}^1$ and $h^{\frac 12} s_{h}$ is in $\mathscr{S}^{-1}$ uniformly with respect to $h$ so that
\begin{equation}\label{eq.testdown2}
 \left[\mathsf{Op}_{h^{\frac 12}}^B \left(s_h \right),\mathsf{Op}_{h^{\frac 12}}^B( h^{\frac{1}{2}} c_1(x) \cdot \xi )\right] = \mathsf{Op}_{h^{\frac 12}}^B(\sigma_{h}^2), \  {\rm{with}} \ \sigma_{h}^2 = \frac{h^{\frac 12}}{i} \lbrace s_{h}, h^{\frac 12} c_1(x) \cdot \xi \rbrace + h \mathscr{S}^0.
\end{equation}
Finally, the exact formula from Corollary~\ref{coro.comp.quadratic} gives
\begin{align} \nonumber
 &\left[\mathsf{Op}_{h^{\frac 12}}^B \left(s_{h} \right),\mathsf{Op}_{h^{\frac 12}}^B( |\xi|^2 )\right] = \mathsf{Op}_{h^{\frac 12}}^B(\sigma_{h}^3), \\ &\quad {\rm{with}} \quad \sigma_{h}^3 = \frac{h^{\frac12}}{i} \lbrace s_{h}, |\xi|^2  \rbrace + \frac{2\widehat{B}_0 h}{i}\big( \xi_1 \partial_{\xi_2} s_{h} - \xi_2 \partial_{\xi_1} s_{h}  \big). \label{eq.testdown3}
\end{align}
We insert the estimates \eqref{eq.testdown1}, \eqref{eq.testdown2} and \eqref{eq.testdown3} in \eqref{eq.testdown} together with the Calder\'on-Vaillancourt Theorem to deduce
\begin{multline}\label{eq.testdown+}
\Big\langle \mathsf{Op}_{h^{\frac 12}}^B \Big( \frac{h^{\frac12}}{i} \lbrace s_{h}, |\xi|^2  \rbrace + \frac{2\widehat{B}_0 h}{i}\big( \xi_1 \partial_{\xi_2} s_{h} - \xi_2 \partial_{\xi_1} s_{h}  \big) + \frac{h}{i} \lbrace s_{h}, c_1(x) \cdot \xi \rbrace \Big) u_h,u_h\Big\rangle \\
=o(h^{\frac 12}).
\end{multline}
Recalling that $a=\mathcal{I}_\Lambda(a)$, one has that
\begin{align*}
\left\{|\xi|^2,s_h \right\}&=2\xi \cdot\partial_xa\left(x,h^{\frac 12} \xi,\frac{\xi\cdot\mathfrak{e}_{\Lambda}}{L_\Lambda }\right)\chi\left(\frac{\xi\cdot\mathfrak{e}_{\Lambda}}{L_\Lambda R}\right)\\
&=2 \left(\frac{\xi\cdot\mathfrak{e}_{\Lambda}}{L_\Lambda }\right)\frac{\mathfrak{e}_\Lambda}{L_\Lambda}\cdot\partial_xa\left(x,h^{\frac 12} \xi,\frac{\xi\cdot\mathfrak{e}_{\Lambda}}{L_\Lambda }\right)\chi\left(\frac{\xi\cdot\mathfrak{e}_{\Lambda}}{L_\Lambda R}\right).
\end{align*}
Regarding the term involving the subsprincipal symbol $hc_1(x)\cdot\xi$, one finds
\begin{align*}
 h\left\{c_1(x)\cdot\xi,s_h\right\}&=h c_{1,\Lambda}(x)\frac{\mathfrak{e}_\Lambda}{L_\Lambda}\cdot\partial_xa\left(x,h^{\frac 12} \xi,\frac{\xi\cdot\mathfrak{e}_{\Lambda}}{L_\Lambda}\right)\chi\left(\frac{\xi\cdot\mathfrak{e}_{\Lambda}}{L_\Lambda R}\right)\\
 &\quad +h \frac{\xi\cdot\mathfrak{e}_{\Lambda}}{L_\Lambda}\left\{c_{1,\Lambda}(x),a\left(x,h^{\frac 12} \xi,\frac{\xi\cdot\mathfrak{e}_{\Lambda}}{L_\Lambda }\right)\chi\left(\frac{\xi\cdot\mathfrak{e}_{\Lambda}}{L_\Lambda R}\right)\right\}\\
&\quad +h \frac{\xi\cdot\mathfrak{e}_{\Lambda}^\perp}{L_\Lambda}\left\{c_{1,\Lambda}^\perp(x),a\left(x,h^{\frac 12} \xi,\frac{\xi\cdot\mathfrak{e}_{\Lambda}}{L_\Lambda}\right)\chi\left(\frac{\xi\cdot\mathfrak{e}_{\Lambda}}{L_\Lambda R}\right)\right\}.\\
 &= -h \frac{\xi \cdot \mathfrak{e}_{\Lambda}^\perp}{L_\Lambda} \partial_\eta a \Big(x,h^{\frac 12} \xi, \frac{\xi \cdot \mathfrak{e}_\Lambda}{L_\Lambda } \Big) \frac{\mathfrak{e}_\Lambda}{L_\Lambda} \cdot \partial_x c_{1,\Lambda}^\perp(x) \chi \left(\frac{\xi\cdot\mathfrak{e}_{\Lambda}}{L_\Lambda R}\right)\\&\quad + \mathscr{O}_R(h) + \mathscr{O}( h^{\frac 12} R^{-1}) \quad {\rm{in}} \quad \mathscr{S}^0,
\end{align*}
where we have set
$$
c_{1,\Lambda}(x):=\frac{\mathfrak{e}_\Lambda}{L_\Lambda}\cdot c_1(x),\quad\text{and}\quad c_{1,\Lambda}^{\perp}(x):=\frac{\mathfrak{e}_\Lambda^\perp}{L_\Lambda}\cdot c_1(x).
$$
Similarily, the magnetic term is
\begin{align*}
2\widehat{B}_0 h \big( \xi_1 \partial_{\xi_2} s_{h} - \xi_2 \partial_{\xi_1} s_{h}  \big) &= 2 \widehat{B}_0 h \frac{\xi \cdot e_\Lambda^\perp}{L_\Lambda} \partial_\eta a \Big(x,h^{\frac 12} \xi, \frac{\xi \cdot \mathfrak{e}_\Lambda}{L_\Lambda } \Big) \chi \left(\frac{\xi\cdot\mathfrak{e}_{\Lambda}}{L_\Lambda R}\right)\\ &\qquad + \mathscr{O}_R (h) + \mathscr{O}(h^{\frac 12} R^{-1}) \quad {\rm{in}} \quad \mathscr{S}^0.
\end{align*}
Therefore, equation \eqref{eq.testdown+} gives, after dividing by $h^{\frac 12}$ and using the Calder\'on-Vaillancourt Theorem,
\begin{multline*}
-2\left\langle\mathsf{Op}_{h^{\frac 12}}^B\left( \left(\frac{\xi\cdot\mathfrak{e}_{\Lambda}}{L_\Lambda }\right)\frac{\mathfrak{e}_\Lambda}{L_\Lambda}\cdot\partial_xa\left(x,h^{\frac 12} \xi,\frac{\xi\cdot\mathfrak{e}_{\Lambda}}{L_\Lambda }\right)\chi\left(\frac{\xi\cdot\mathfrak{e}_{\Lambda}}{L_\Lambda R}\right)\right)u_h,u_h\right\rangle\\
+\left\langle\mathsf{Op}_{h^{\frac 12}}^B\left(h^{\frac 12} \frac{\xi \cdot \mathfrak{e}_{\Lambda}^\perp}{L_\Lambda} \partial_\eta a \Big(x,h^{\frac 12} \xi, \frac{\xi \cdot \mathfrak{e}_\Lambda}{L_\Lambda } \Big) \frac{\mathfrak{e}_\Lambda}{L_\Lambda} \cdot \partial_x c_{1,\Lambda}^\perp(x) \chi \left(\frac{\xi\cdot\mathfrak{e}_{\Lambda}}{L_\Lambda R}\right)\right)u_h,u_h\right\rangle\\
-2\widehat{B}_0\left\langle\mathsf{Op}_{h^{\frac 12}}^B\left(h^{\frac 12} \frac{\xi \cdot e_\Lambda^\perp}{L_\Lambda} \partial_\eta a \Big(x,h^{\frac 12} \xi, \frac{\xi \cdot \mathfrak{e}_\Lambda}{L_\Lambda } \Big) \chi \left(\frac{\xi\cdot\mathfrak{e}_{\Lambda}}{L_\Lambda R}\right)\right)u_h,u_h\right\rangle
\\
=o_R(1)+\mathscr{O}(R^{-1}).
\end{multline*}
We then rescale back to the $h$-quantization,
\begin{multline*}
-2\left\langle\Op\left(\left(\frac{\xi\cdot\mathfrak{e}_{\Lambda}}{L_\Lambda h^{\frac12}}\right)\frac{\mathfrak{e}_\Lambda}{L_\Lambda}\cdot\partial_xa\left(x,\xi,\frac{\xi\cdot\mathfrak{e}_{\Lambda}}{L_\Lambda h^{\frac12}}\right)\chi\left(\frac{\xi\cdot\mathfrak{e}_{\Lambda}}{L_\Lambda Rh^{\frac12}}\right)\right)u_h,u_h\right\rangle\\
+\left\langle\Op\left(\frac{\xi\cdot\mathfrak{e}_{\Lambda}^\perp}{L_\Lambda}\partial_\eta a\left(x,\xi,\frac{\xi\cdot\mathfrak{e}_{\Lambda}}{L_\Lambda h^{\frac12}}\right)\frac{\mathfrak{e}_{\Lambda}}{L_\Lambda}\cdot \partial_xc_{1,\Lambda}^\perp(x)\chi\left(\frac{\xi\cdot\mathfrak{e}_{\Lambda}}{L_\Lambda Rh^{\frac12}}\right)\right)u_h,u_h\right\rangle\\
-2\widehat{B}_0\left\langle\Op\left(\frac{\xi\cdot\mathfrak{e}_{\Lambda}^{\perp}}{L_\Lambda}\partial_\eta a\left(x,\xi,\frac{\xi\cdot\mathfrak{e}_{\Lambda}}{L_\Lambda h^{\frac12}}\right)\chi\left(\frac{\xi\cdot\mathfrak{e}_{\Lambda}}{L_\Lambda Rh^{\frac12}}\right)\right)u_h,u_h\right\rangle
\\
=o_R(1)+\mathscr{O}(R^{-1}).
\end{multline*}
Hence, letting $h\rightarrow 0^+$ and $R \rightarrow+\infty$ (in this order), one finds that, for every $a\in\mathscr{C}^\infty_c(T^*\T^2\times \R)$ verifying $\mathcal{I}_\Lambda(a)=a$,
$$
\left\langle \tilde{\mu}_\Lambda,-2\eta \frac{\mathfrak{e}_\Lambda}{L_\Lambda}\cdot\partial_xa +\frac{\xi\cdot\mathfrak{e}_{\Lambda}^{\perp}}{L_\Lambda}\left(\frac{\mathfrak{e}_{\Lambda}}{L_\Lambda}\cdot \partial_xc_{1,\Lambda}^\perp-2\widehat{B}_0\right)\partial_\eta a \right\rangle=0,
$$
where we recall that $\xi\cdot\mathfrak{e}_{\Lambda}^{\perp}=\pm L_\Lambda$ on the support of $\tilde{\mu}_\Lambda$. Using~\eqref{e:expression-subprincipal-symbol} together with the invariance by the geodesic flow and the fact that $\tilde{\mu}_\Lambda$ is carried by $\T^2\times(\Lambda^\perp\cap\mathbf{S}^1)\times\R$, one finds the expected result
$$
\left\langle \tilde{\mu}_\Lambda,\eta \frac{\mathfrak{e}_\Lambda}{L_\Lambda}\cdot\partial_xa +\frac{\xi\cdot\mathfrak{e}_{\Lambda}^{\perp}}{L_\Lambda}\mathcal{I}_\Lambda(B)\partial_\eta a \right\rangle=0.
$$
\end{proof}

\subsection{Conclusion}

 Gathering all these results, we are in position to prove the following Theorem.
\begin{theorem}\label{t:maintheo-configurationspace} Suppose that, for every $\Lambda$ in $\mathcal{L}_1$, $\mathcal{I}_\Lambda(B)>0$ everywhere. Then, one has 
 $$
 \mathcal{N}(\mathscr{L}_\alpha)=\left\{\operatorname{Leb}\right\}.
 $$
More precisely, any $\mu(\dd x,\dd\xi)\in\mathcal{M}(\mathscr{L}_\alpha)$ is of the form $
\dd x\,\nu_0(\dd\xi),$
where $\nu_0$ is a probability measure on $\mathbf{S}^1$.
\end{theorem}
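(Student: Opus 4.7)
Let $\mu\in\mathcal{M}(\mathscr{L}_\alpha)$. By Lemma~\ref{l:magnetic-wigner}, $\mu$ is a probability measure on $S^*\T^2$ that is invariant under the geodesic flow $\varphi^t$, so we may invoke the decomposition discussed in Section~\ref{s:decomposition}:
\begin{equation*}
\mu=\widehat{\mu}_0\big|_{\T^2\times(\Omega_2\cap\mathbf{S}^1)}+\sum_{\Lambda\in\mathcal{L}_1}\mathcal{I}_\Lambda(\mu)\big|_{\T^2\times(\Lambda^\perp\cap\mathbf{S}^1)}.
\end{equation*}
The first piece is, by construction, independent of $x$ since $\widehat{\mu}_0$ depends only on $\xi$. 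Hence to prove the desired product structure it suffices to show that each term $\mathcal{I}_\Lambda(\mu)|_{\T^2\times(\Lambda^\perp\cap\mathbf{S}^1)}$ has no dependence on $x$ either.

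For this, I fix $\Lambda\in\mathcal{L}_1$ and apply the further splitting $\mu=\mu^\Lambda+\mu_\Lambda$ furnished by Lemma~\ref{l:magnetic-wigner-split}, together with the identity~\eqref{e:splitting-2microlocal-measure3}:
\begin{equation*}
\mu\big|_{\T^2\times(\Lambda^\perp\setminus\{0\})}=\mathcal{I}_\Lambda(\mu_\Lambda)+\Big(\int_{\T^2}\mu^\Lambda(\dd x,\xi)\Big)\Big|_{\T^2\times(\Lambda^\perp\setminus\{0\})}.
\end{equation*}
Under the standing hypothesis $\mathcal{I}_\Lambda(B)>0$, Proposition~\ref{prop.muloc} gives $\mu_\Lambda=0$, so the first summand on the right drops out. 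The remaining summand is the $x$-marginal of $\mu^\Lambda$ and is therefore manifestly independent of $x$. Intersecting with the unit cotangent bundle shows $\mathcal{I}_\Lambda(\mu)|_{\T^2\times(\Lambda^\perp\cap\mathbf{S}^1)}$ has the form $\dd x\otimes (\text{measure in }\xi)$.

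Summing over $\Lambda\in\mathcal{L}_1$ and adding back the contribution from irrational directions, I conclude that $\mu=\dd x\,\nu_0(\dd\xi)$ for some probability measure $\nu_0$ on $\mathbf{S}^1$. The third item of Lemma~\ref{l:magnetic-wigner} then yields $\pi_*\mu=\dd x$, which is precisely $\operatorname{Leb}$, establishing $\mathcal{N}(\mathscr{L}_\alpha)=\{\operatorname{Leb}\}$. The substantive content is already carried by Proposition~\ref{prop.muloc} and Lemma~\ref{lem.muup}; this final theorem is essentially a bookkeeping assembly of those two inputs together with the dynamical decomposition of Lemma~\ref{l:anantharamanmacia}.
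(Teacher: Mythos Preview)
Your proposal is correct and follows essentially the same route as the paper: decompose $\mu$ via Lemma~\ref{l:anantharamanmacia}, split each rational piece as $\mu^\Lambda+\mu_\Lambda$ using Lemma~\ref{l:magnetic-wigner-split}, kill $\mu_\Lambda$ with Proposition~\ref{prop.muloc}, and use~\eqref{e:splitting-2microlocal-measure3} (which encodes Lemma~\ref{lem.muup}) to see that what remains is $x$-independent. Your closing remark that the theorem is a bookkeeping assembly of Proposition~\ref{prop.muloc} and Lemma~\ref{lem.muup} on top of Lemma~\ref{l:anantharamanmacia} is exactly the paper's view as well.
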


\begin{proof} Let $\nu$ be an element in $\mathcal{N}(\mathscr{L}_\alpha)$. Denote by $\mu$ a semiclassical measure associated to the sequence of eigenfunctions $(u_h)_{h\rightarrow 0^+}$ used to generate $\nu$. According to Lemma~\ref{l:anantharamanmacia}, it is sufficient to analyze $\mu|_{\T^2\times(\Lambda^\perp\cap\mathbf{S}^1)}$ in order to conclude. Thanks to~\eqref{e:splitting-2microlocal-measure}, one has that
$$
\mu|_{\T^2\times(\Lambda^\perp\cap\mathbf{S}^1)}=\mu^{\Lambda}|_{\T^2\times(\Lambda^\perp\cap\mathbf{S}^1)}+\mu_{\Lambda}|_{\T^2\times(\Lambda^\perp\cap\mathbf{S}^1)}.
$$
 According to Proposition \ref{prop.muloc}, our assumption on $B$ ensures that $\mu_{\Lambda}\equiv 0$. Finally,~\eqref{e:splitting-2microlocal-measure3} allows to conclude that $\mu^{\Lambda}|_{\T^2\times(\Lambda^\perp\cap\mathbf{S}^1)}$ has no nonzero Fourier coefficients which ends the proof of the theorem.
\end{proof}

\section{Proof for variable magnetic fields}\label{s:proof}

 In this final section, we consider slightly better quasimodes, namely we suppose that $(u_{h_n})_{n\geqslant 1}$ is a sequence in $H^2(\T^2,L)$ such that
\begin{equation}\label{e:semiclassical-quasimode-strong}
 h_n^2(\mathscr{L}_\alpha +V) u_{h_n}=u_{h_n}+o_{L^2}\left(h_n^2\right),\quad \|u_{h_n}\|_{L^2(\T^2)}=1,\quad h_n\rightarrow 0^+.
\end{equation}
In particular, $(u_{h_n})_{n\geqslant 1}$ verifies~\eqref{e:semiclassical-quasimode} and, up to extraction, we denote by $\mu$ a semiclassical measure of this sequence of quasimodes. Thanks to Lemma~\ref{l:magnetic-wigner} and to Theorem~\ref{t:maintheo-configurationspace}, one knows that $\mu(\dd x,\dd \xi)$ is a probability measure on $S^*\T^2$ that is of the form $\dd x\nu_0(\dd\xi)$ with $\nu_0$ a probablity measure on $\mathbf{S}^1$. Our final step to prove Theorem~\ref{t:maintheo} consists in showing that $\nu_0$ is the Lebesgue measure along the circle $\mathbf{S}^1$. To see this, it is sufficient to pick a smooth test function $a(\xi)\in\mathscr{C}^\infty_c(\R^2)$ and to write the quasimode equation~\eqref{e:semiclassical-quasimode-strong} one last time:
$$
\left\langle \left[\Op(a),h^2(\mathscr{L}_\alpha +V)\right]u_h,u_h\right\rangle=o(h^2).
$$
Using the composition Theorem~\ref{thm.comp} together with the Calder\'on-Vaillancourt Theorem~\ref{thm.cv}, one finds that
$$
\left\langle \Op\left(\left\{a(\xi),c_1(x)\cdot\xi \right\}-2\widehat{B}_0(\xi_1\partial_{\xi_2}-\xi_2\partial_{\xi_1})a\right)u_h,u_h\right\rangle=o(1).
$$
\begin{remark}
 Note that we used here that $[\Op(a),V]=o(1)$ as $h\rightarrow 0^+$. This is the only place of the article where we use the regularity of $V$ (through our use of pseudodifferential calculus).
\end{remark}
Letting $h$ to $0$ and using~\eqref{e:expression-subprincipal-symbol}, one finds that
$$
\left\langle \mu,\xi_1\{a(\xi),A_1^{\text{per}}(x)\}+\xi_2\{a(\xi),A_2^{\text{per}}(x)\} +\widehat{B}_0(\xi_1\partial_{\xi_2}-\xi_2\partial_{\xi_1})a\right\rangle =0.
$$
In fact, most of the terms in this sum cancel as the measure $\mu$ is of the form $\dd x\nu_0(\dd\xi)$:
$$
\left\langle \mu,\xi_j\{a(\xi),A_j^{\text{per}}(x)\}\right\rangle =\int_{\mathbf{S}^1}\xi_j\partial_\xi a\cdot\left(\int_{\T^2}\partial_xA_j^{\text{per}}\dd x\right)\nu_0(\dd\xi)=0.
$$
From this, we get that
$$
\left\langle \nu_0,\widehat{B}_0(\xi_1\partial_{\xi_2}-\xi_2\partial_{\xi_1})a\right\rangle =0.
$$
As $\widehat{B}_0\neq 0$, we can conclude that $\nu_0$ is the Lebesgue measure along the circle. This concludes the proof of Theorem~\ref{t:maintheo}.

\appendix
\section{Results for more general magnetic fields}
\label{s:general}

 Recall that Theorem~\ref{t:maintheo} depends on the fact that
$$
B_\infty(x,\theta)=\lim_{T\rightarrow +\infty}\frac{1}{T} \int_0^T B(x+t\theta)\dd t,\quad (x,\theta)\in S^*\T^2,
$$
is a positive function. When this geometric control condition is not satisfied, we can still conclude some regularity properties of the elements in $\mathcal{N}(\mathscr{L}_\alpha)$ following the lines of~\cite{MaciaRiviere2018}. More precisely, we can define a set of critical geodesics along $\Lambda$ (with respect to $B$) as
$$
\mathcal{Z}_\Lambda(B):=\left\{x_0\in\T^2: \forall \theta\in \mathbf{S}^1\cap\Lambda^\perp\ \text{such that}\ B_\infty(x_0,\theta)=0\right\}.
$$
This is a set of closed geodesics in the direction $\Lambda^\perp$ that could be either finite or infinite. We denote then by $\mathcal{N}_\Lambda(B)$ the convex closure of the measures $c\delta_\gamma$, where $c\in[0,1]$ and where $\delta_\gamma$ is the normalized Lebesgue measure along a closed geodesic $\gamma\in\mathcal{Z}_\Lambda(B)$. As a result of our analysis, we can prove the following theorem which is an analogue for magnetic fields of~\cite[Th.~1.1]{MaciaRiviere2018}.

\begin{theorem}\label{t:generaltheorem} Suppose that $\widehat{B}_0>0$. Then, one has:
\begin{enumerate}
 \item If $B_\infty\geq 0$, then any $\nu\in\mathcal{N}(\mathscr{L}_\alpha)$ can be decomposed as
 $$\nu=c_0+\sum_{\Lambda\in\mathcal{L}_1}\nu_{\operatorname{sing}}^\Lambda,
 $$
 where $c_0\in[0,1]$ and $\nu_{\operatorname{sing}}^\Lambda\in \mathcal{N}_\Lambda(B)$.
 \item Without any assumption on $B_\infty$, one has, for any $\nu\in\mathcal{N}(\mathscr{L}_\alpha)$ and for any closed geodesic $\gamma$,
 $$
 \nu(\gamma)>0\quad\Longrightarrow\quad \exists\Lambda\in\mathcal{L}_1\ \text{such that}\ \gamma\subset \mathcal{Z}_\Lambda(B)
 $$
 \item Without any assumption on $B_\infty$ and if $k\neq 0$, any $\nu\in\mathcal{N}(\mathscr{L}_\alpha)$ verifies
 $$
 \widehat{\nu}_k\neq 0\quad\Longrightarrow\quad \exists \Lambda\in\mathcal{L}_1\ \text{such that}\ k\in\Lambda\ \text{and}\ \mathcal{Z}_\Lambda(B)\neq\emptyset.
 $$
\end{enumerate}
\end{theorem}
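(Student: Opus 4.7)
The plan is to extend the strategy of Section~\ref{s:periodic} beyond the assumption $\mathcal{I}_\Lambda(B)>0$, aiming at the following structural statement: for any semiclassical measure $\mu$ lifting $\nu$ via Lemma~\ref{l:magnetic-wigner} and any $\Lambda\in\mathcal{L}_1$, the measure $\mu_\Lambda$ produced by Lemma~\ref{l:magnetic-wigner-split} is a mixture of normalized Lebesgue measures on the critical closed geodesics $\gamma\subset\mathcal{Z}_\Lambda(B)$; equivalently, $\pi_*\mu_\Lambda\in\mathcal{N}_\Lambda(B)$. Once this is established, the three items of Theorem~\ref{t:generaltheorem} will follow from the decomposition of $\nu=\pi_*\mu$ given by Lemmas~\ref{l:anantharamanmacia}, \ref{l:magnetic-wigner-split} and \ref{lem.muup}.

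The heart of the argument is an analysis of the finite invariant measures of the flow $\phi_{\Lambda,\pm}^t$ of Lemma~\ref{l:invariance-2microlocal} under the sole assumption $\widehat{B}_0>0$. I introduce coordinates $u=\mathfrak{e}_\Lambda\cdot x$ and $v=\mathfrak{e}_\Lambda^\perp\cdot x$ on $\T^2$, and write $\mathcal{I}_\Lambda(B)(x)=h(u)$ for some $h\in\mathscr{C}^\infty(\R/\Z,\R)$. Since the vector field $X_{\Lambda,\pm}$ shifts $x$ in the $\mathfrak{e}_\Lambda$ direction (that is, transverse to the closed geodesics of direction $\Lambda^\perp$), the coordinate $v$ is conserved and $(u,\eta)$ evolves on the cylinder $\R/\Z\times\R$ according to $\dot u=L_\Lambda\eta$, $\dot\eta=\pm h(u)$. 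On the universal cover this is a one-dimensional Hamiltonian system with Hamiltonian $L_\Lambda\eta^2/2\mp V(u)$, where $V'=h$. Because $\int_0^1 h=\widehat{B}_0>0$, the primitive $V$ satisfies $V(u+1)-V(u)=\widehat{B}_0$ and grows linearly at infinity, which forces $|\eta|$ to diverge along every non-stationary orbit. Consequently, any finite invariant measure of $\phi_{\Lambda,\pm}^t$ is concentrated on the fixed-point set $\{\eta=0,\ h(u)=0\}$, corresponding to $\mathcal{Z}_\Lambda(B)\times\{0\}$ in the original variables. Combining this with the geodesic-flow invariance of $\tilde\mu_\Lambda^\pm$ (which produces uniformity in $v$) and integrating in $\eta$ yields the claimed mixture representation of $\mu_\Lambda$.

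With this structural result in hand, the three parts follow. Splitting $\mu$ via Lemma~\ref{l:anantharamanmacia} and noting that $\widehat\mu_0|_{\Omega_2\cap\mathbf{S}^1}$ and $\mu^\Lambda|_{\Lambda^\perp\cap\mathbf{S}^1}$ are $x$-constant, I obtain
\[\nu=c_0\operatorname{Leb}_{\T^2}+\sum_{\Lambda\in\mathcal{L}_1}\pi_*\mu_\Lambda,\]
with $c_0\in[0,1]$ gathering the mass of the $x$-constant pieces and each $\pi_*\mu_\Lambda\in\mathcal{N}_\Lambda(B)$, which proves part 1 (under the stronger assumption $B_\infty\geq 0$ the Hamiltonian analysis shortens considerably because $\dot\eta$ has a definite sign along $\phi_{\Lambda,\pm}^t$, giving a direct monotonicity argument). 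For part 2, an atom of $\nu$ on a closed geodesic $\gamma$ must lie over a rational direction belonging to some $\Lambda^\perp$, and it cannot originate from the $x$-constant contributions, so it is necessarily carried by $\pi_*\mu_\Lambda$; the support property then forces $\gamma\subset\mathcal{Z}_\Lambda(B)$. For part 3, a non-vanishing Fourier coefficient $\widehat\nu_k$ with $k\neq 0$ can only be produced by some $\pi_*\mu_\Lambda$ with $k\in\Lambda$, and the non-vanishing of $\mu_\Lambda$ combined with its support in $\mathcal{Z}_\Lambda(B)$ gives $\mathcal{Z}_\Lambda(B)\neq\emptyset$. The main obstacle is precisely the Hamiltonian analysis of the transverse flow on the cylinder when $\mathcal{I}_\Lambda(B)$ changes sign: the positivity of the total flux $\widehat{B}_0$ must enter in an essential way to rule out invariant measures of positive Liouville-type content in $\eta$, and one has to combine the $X_{\Lambda,\pm}$-invariance of $\tilde\mu_\Lambda^\pm$ with the geodesic-flow invariance carefully to extract the mixture structure rather than merely a support statement.
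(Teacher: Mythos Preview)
Your treatment of item~1 is correct and coincides with the paper's argument: when $\mathcal{I}_\Lambda(B)\geq 0$ (equivalently $B_\infty\geq 0$), the monotonicity of $\eta$ along the transverse flow forces every non-stationary orbit to escape, so the only finite invariant measures are carried by the fixed-point set.

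The gap is in your extension to the sign-changing case, on which your proofs of items~2 and~3 rest. The assertion that ``any finite invariant measure of $\phi_{\Lambda,\pm}^t$ is concentrated on the fixed-point set $\{\eta=0,\ h(u)=0\}$'' is false in general. The linear growth of the primitive $V$ on the universal cover only constrains orbits that wind in $u$; it says nothing about orbits that oscillate in a bounded $u$-interval. Concretely, take $h(u)=\widehat{B}_0+A\cos(2\pi u)$ with $A>\widehat{B}_0>0$. Then $h$ has a simple zero $u_*$ with $h'(u_*)<0$, and the planar Hamiltonian system $\dot u=L_\Lambda\eta,\ \dot\eta=h(u)$ has a center at $(u_*,0)$: the nearby level sets of $H=L_\Lambda\eta^2/2-V(u)$ are closed curves, giving genuine periodic orbits that carry finite invariant measures supported away from $\{\eta=0\}$. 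The paper states this explicitly: when $\mathcal{I}_\Lambda(B)$ changes sign, ``the support of the measure $\rho_\Lambda^\pm(y,\eta)$ may not be contained in $\{\eta=0\}$ (for instance it may contain nontrivial closed orbits of this flow).'' Hence your ``support in $\mathcal{Z}_\Lambda(B)$'' conclusion for $\pi_*\mu_\Lambda$ does not follow, and your derivations of items~2 and~3 from it are unjustified.

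The paper avoids any structural description of $\tilde\mu_\Lambda^\pm$ in the sign-changing case. Item~3 is obtained by the contrapositive of Proposition~\ref{prop.muloc}: if $\widehat{\nu}_k\neq 0$ with $k\in\Lambda\setminus\{0\}$, then by \eqref{e:splitting-2microlocal-measure3} one has $\mu_\Lambda\neq 0$; but $\mathcal{Z}_\Lambda(B)=\emptyset$ would mean the continuous function $\mathcal{I}_\Lambda(B)$ never vanishes, and since its mean is $\widehat{B}_0>0$ it would be everywhere positive, forcing $\mu_\Lambda=0$ by Proposition~\ref{prop.muloc}. Item~2 is then reduced to item~3 by testing $\nu$ against a bump $\chi_{\gamma_0}$ with Fourier support in $\Lambda_0$ and shrinking support around $\gamma_0$, which produces a nonzero coefficient $\widehat{\nu}_k$ for some $k\in\Lambda_0\setminus\{0\}$.
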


As an illustration of the first item, if $B\geq 0$ and if there exists only a single geodesic (issued from the point $(x_0,\pm\theta_0)$) such that $B_\infty(x_0,\pm\theta_0)=0$, then any $\nu\in\mathcal{N}(\mathscr{L}_\alpha)$ is of the form $c_0\dd x+(1-c_0)\delta_{\gamma_0}$, where $\gamma_0$ is the closed geodesic issued from $(x_0,\theta_0)\in S^*\T^2$.

In view of understanding the scope of this theorem, it is worth noting that 
$$
\widehat{B}_0>0\quad\Longrightarrow\quad \sharp\left\{\Lambda\in\mathcal{L}_1:\ \mathcal{Z}_\Lambda(B)\neq\emptyset\right\}<\infty.
$$
Indeed, recalling Remark~\ref{rem.Binfty}, one can observe that, for every $ x\in \T^2$ and for every $\varepsilon>0$,
$$
\mathcal{I}_\Lambda(B)(x)\geq \widehat{B}_0-\sum_{k\in\Z\setminus 0}\left|\widehat{B}_{k\mathfrak{e}_\Lambda}\right|\geq \widehat{B}_0-\frac{\|B-\widehat{B}_0\|_{\dot{H}^{\frac{1}{2}+\varepsilon}(\T^2)}}{L_\Lambda^{\frac{1}{2}+2\varepsilon}}\left(\sum_{k\neq 0}\frac{1}{k^{1+2\varepsilon}}\right)^{\frac{1}{2}},
$$
from which we can deduce the above property as $\sharp\{\Lambda: L_\Lambda<R\}<\infty$ for every $R>0$. In particular, as soon as $\widehat{B}_0>0$, any $\nu\in\mathcal{N}(\mathscr{L}_\alpha)$ has its Fourier coefficients along a finite number of primitive rank $1$ sublattices of $\Z^2$ (that depend only on $B$). 

\begin{proof} Let $\nu$ be an element in $\mathcal{N}(\mathscr{L}_\alpha)$. Up to another extraction, we can suppose that the sequence of quasimodes used to generate $\nu$ gives rise to an unique semiclassical measure $\mu$ as in Lemma~\ref{l:magnetic-wigner}. According to this Lemma and to Lemma~\ref{l:anantharamanmacia}, one has $\nu=\pi_*(\mu)$ and 
$$
\mu(x,\xi)=\widehat{\mu}_0(\xi)|_{\T^2\times(\Omega_2\cap\mathbf{S}^1)}+\sum_{\Lambda\in\mathcal{L}_1} \nu_\Lambda^\pm(x)\otimes\delta\left(\xi\mp\frac{\mathfrak{e}_\Lambda^\perp}{L_\Lambda}\right),
$$
where $\nu_\Lambda^\pm(x)$ has only Fourier coefficients along $\Lambda$. Thanks to Lemma~\ref{l:magnetic-wigner-split}, $\mu$ can be further decomposed according to $\mu_\Lambda$ and $\mu^{\Lambda}$. Then, Lemma~\ref{lem.muup} (together with another application of Lemma~\ref{l:anantharamanmacia}) ensures that the contribution of $\mu^{\Lambda}|_{\T^2\times\Lambda^\perp}$ has no nonzero Fourier coefficients. Hence, if we denote by $\tilde{\nu}_{\Lambda}^\pm$ the contribution of $\mu_\Lambda$, one finds that 
$$
\nu(\dd x)=c_0\dd x+\sum_{\Lambda\in\mathcal{L}_1}\left(\tilde{\nu}_\Lambda^+(\dd x)+\tilde{\nu}_\Lambda^-(\dd x)\right),
$$
where $c_0\in[0,1]$ is a constant. Moreover, $\tilde{\nu}_\Lambda^\pm$ has only Fourier coefficients along $\Lambda$ and, thanks to Lemma~\ref{l:invariance-2microlocal}, it is the pushforward of a finite measure\footnote{Compared with Lemma~\ref{l:invariance-2microlocal}, we make the small abuse of notations to remove the $\xi$-variable as the measure is carried along $\pm\mathfrak{e}_\Lambda/L_\Lambda$ along this variable.} $\tilde{\mu}_\Lambda^\pm(x,\eta)$ on $\T^2\times \R$ which has only Fourier coefficients along $\Lambda$ and which is invariant by the vector field
$$
X_{\Lambda,\pm}=\eta\frac{\mathfrak{e}_\Lambda}{L_\Lambda}\cdot\partial_x\pm\mathcal{I}_\Lambda(B)\partial_\eta.
$$
Equivalently, $\tilde{\mu}_\Lambda^\pm(x,\eta)$ can be identified with a measure $\rho_{\Lambda}^\pm(y,\eta)$ on $T^*\T_\Lambda$, with $\T_\Lambda=\langle\Lambda\rangle/\Lambda$, which is invariant by the vector field $Y_\Lambda^\pm(y,\eta)=\eta\partial_y\pm\mathcal{I}_\Lambda(B)(y)\partial_\eta
$. Hence, we can derive the regularity properties of the measure $\nu$ from the fact that each measure $\tilde{\nu}_\Lambda^\pm$ is the pushforward of an invariant measure.

In the case where $B_\infty\geq 0$ everywhere, one has that, for every $\Lambda$, $\mathcal{I}_\Lambda(B)\geq 0$. Moreover, the function $\mathcal{I}_\Lambda(B)$ is not identically $0$ as $\widehat{B}_0>0$. In particular, all the trajectories of the flow generated by $Y_\Lambda^\pm$ escape to infinity or converge to a point in $\{(y,0):\mathcal{I}_\Lambda(B)(y)=0\}$. Hence, any invariant probability measure by this vector field is in the (closed) convex hull of the Dirac masses carried by this critical set of points. As $\tilde{\nu}_\Lambda^\pm$ is the pushforward of a finite measure invariant by this vector field, this concludes the first part of the proof of the theorem.

Let us now discuss the general case where $\mathcal{I}_\Lambda(B)$ may change sign (meaning the last two items of Theorem~\ref{t:generaltheorem}). For the last item, it is a direct consequence of Proposition \ref{prop.muloc}. Indeed, if $\widehat{\nu}_k\neq 0$, it means that the measure $\mu_\Lambda$ is not identically $0$ where $\Lambda$ is the rank $1$ lattice containing $k$. According to Proposition~\ref{prop.muloc}, one has that $\mathcal{I}_\Lambda(B)$ is not positive everywhere.
In order to prove the second item, suppose that there exists a closed geodesic $\gamma_0$ such that $\nu(\gamma_0)>0$ and denote by $\Lambda_0$ the rank $1$ sublattice such that $\Lambda_0^\perp$ is parallel to $\gamma_0$. One can pick a smooth function $\chi_{\gamma_0}\in\mathscr{C}^\infty(\T^2,[0,1])$ having all its Fourier coefficients in $\Lambda_0$, equal to $1$ in an $\epsilon$-neighborhood of $\gamma_0$ and $0$ outside a $2\epsilon$-neighborhood. One has $\nu(\chi_{\gamma_0})\geq \nu(\gamma_0)$. Observe now that, as $\epsilon$ goes to $0$, $\int_{\mathbb{T}^2}\chi_{\gamma_0}\dd x$ tends to $0$ by the dominated convergence Theorem. Hence, taking $\epsilon$ small enough, one has 
$$
0<\nu(\gamma_0)\leq  \widehat{\nu}_0\int_{\mathbb{T}^2}\chi_{\gamma_0}\dd x +\sum_{k\neq 0}\widehat{\nu}_k\widehat{\chi}_{\gamma_0}(-k)\leq \frac{1}{2}\nu(\gamma_0)+\sum_{k\neq 0}\widehat{\nu}_k\widehat{\chi}_{\gamma_0}(-k).
$$
Hence, there exists some $k\in\Lambda_0\setminus\{0\}$ such that $\widehat{\nu}_k\neq 0$. In particular, the second item is a consequence of the third one.
\end{proof}

\bibliographystyle{plain}
\bibliography{refs}

\begin{thebibliography}{10}

\bibitem{Anantharaman2008}
N.~Anantharaman.
\newblock Entropy and the localization of eigenfunctions.
\newblock {\em Ann. Math. (2)}, 168(2):435--475, 2008.

\bibitem{AnantharamanFermanianMacia2015}
N.~Anantharaman, C.~Fermanian-Kammerer, and F.~Maci{\`a}.
\newblock Semiclassical completely integrable systems: long-time dynamics and
  observability via two-microlocal {Wigner} measures.
\newblock {\em Am. J. Math.}, 137(3):577--638, 2015.

\bibitem{AnantharamanKochNonnenmacher2009}
N.~Anantharaman, H.~Koch, and S.~Nonnenmacher.
\newblock Entropy of eigenfunctions.
\newblock In {\em New trends in mathematical physics. Selected contributions of
  the XVth international congress on mathematical physics, Rio de Janeiro,
  Brazil, August 5--11, 2006}, pages 1--22. 2009.

\bibitem{AnantharamanLeautaudMacia2016}
N.~Anantharaman, M.~L{\'e}autaud, and F.~Maci{\`a}.
\newblock Wigner measures and observability for the {Schr{\"o}dinger} equation
  on the disk.
\newblock {\em Invent. Math.}, 206(2):485--599, 2016.

\bibitem{AnantharamanMacia2014}
N.~Anantharaman and F.~Maci{\`a}.
\newblock Semiclassical measures for the {Schr{\"o}dinger} equation on the
  torus.
\newblock {\em J. Eur. Math. Soc. (JEMS)}, 16(6):1253--1288, 2014.

\bibitem{AnantharamanNonnenmacher2007}
N.~Anantharaman and S.~Nonnenmacher.
\newblock Half-delocalization of eigenfunctions for the {Laplacian} on an
  {Anosov} manifold.
\newblock {\em Ann. Inst. Fourier}, 57(7):2465--2523, 2007.

\bibitem{ArnaizMacia2022}
V.~Arnaiz and F.~Maci{\`a}.
\newblock Concentration of {Quasimodes} for {Perturbed} {Harmonic}
  {Oscillators}.
\newblock Preprint, {arXiv}:2206.10307 [math.{AP}] (2022), 2022.

\bibitem{ArnaizMacia2022a}
V.~Arnaiz and F.~Maci{\`a}.
\newblock Localization and delocalization of eigenmodes of harmonic
  oscillators.
\newblock {\em Proc. Am. Math. Soc.}, 150(5):2195--2208, 2022.

\bibitem{AthreyaDyatlovMiller2024}
J.~Athreya, S.~Dyatlov, and N.~Miller.
\newblock Semiclassical measures for complex hyperbolic quotients.
\newblock Preprint, {arXiv}:2402.06477 [math.{AP}] (2024), 2024.

\bibitem{LeBalchNiuSun2025}
K.~Le Balc'h, J.~Niu, and C.~Sun.
\newblock Geometric condition for the observability of electromagnetic
  {Schr\"odinger} operators on $\mathbb{T}^2$.
\newblock Preprint, {arXiv}:2506.24049 [math.{AP}] (2025), 2025.

\bibitem{BardosLebeauRauch1992}
C.~Bardos, G.~Lebeau, and J.~Rauch.
\newblock Sharp sufficient conditions for the observation, control, and
  stabilization of waves from the boundary.
\newblock {\em SIAM J. Control Optim.}, 30(5):1024--1065, 1992.

\bibitem{BourgadeYau2017}
P.~Bourgade and H.-T. Yau.
\newblock The eigenvector moment flow and local quantum unique ergodicity.
\newblock {\em Commun. Math. Phys.}, 350(1):231--278, 2017.

\bibitem{BourgadeYauYin2020}
P.~Bourgade, H.-T. Yau, and J.~Yin.
\newblock Random band matrices in the delocalized phase {I}: {Quantum} unique
  ergodicity and universality.
\newblock {\em Commun. Pure Appl. Math.}, 73(7):1526--1596, 2020.

\bibitem{BourgainBurqZworski2013}
J.~Bourgain, N.~Burq, and M.~Zworski.
\newblock Control for {Schr{\"o}dinger} operators on 2-tori: rough potentials.
\newblock {\em J. Eur. Math. Soc. (JEMS)}, 15(5):1597--1628, 2013.

\bibitem{BourgainLindenstrauss2003}
J.~Bourgain and E.~Lindenstrauss.
\newblock Entropy of quantum limits.
\newblock {\em Commun. Math. Phys.}, 233(1):153--171, 2003.

\bibitem{BoutetdeMonvelGrigisHelffer1976}
L.~Boutet~de Monvel, A.~Grigis, and B.~Helffer.
\newblock Param{\`e}trixes d'op{\'e}rateurs pseudodifferentiels {\`a}
  caract{\'e}ristiques multiples.
\newblock In {\em Journ\'ees \'equations aux d\'eriv\'ees partielles de
  Rennes}, pages 93--121. Paris: Soci{\'e}t{\'e} Math{\'e}matique de France
  (SMF), 1976.

\bibitem{BurqZworski2012}
N.~Burq and M.~Zworski.
\newblock Control for {Schr{\"o}dinger} operators on tori.
\newblock {\em Math. Res. Lett.}, 19(2):309--324, 2012.

\bibitem{CekicLefeuvre2024}
M.~Cekic and T.~Lefeuvre.
\newblock Semiclassical analysis on principal bundles, 2024.
\newblock preprint arXiv:2405.14846.

\bibitem{CharlesLefeuvre2024}
L.~Charles and T.~Lefeuvre.
\newblock Semiclassical defect measures of magnetic laplacians on hyperbolic
  surfaces, 2025.
\newblock Preprint arXiv:2505.08584.

\bibitem{ColindeVerdiere1985}
Y.~Colin~de Verdi{\`e}re.
\newblock Ergodicity and eigenfunctions of the {Laplacian}.
\newblock {\em Commun. Math. Phys.}, 102:497--502, 1985.

\bibitem{Cooke1971}
R.~Cooke.
\newblock A {Cantor}-{Lebesgue} theorem in two dimensions.
\newblock {\em Proc. Am. Math. Soc.}, 30:547--550, 1971.

\bibitem{Demailly1985}
J.-P. Demailly.
\newblock Champs magn{\'e}tiques et in{\'e}galit{\'e}s de {Morse} pour la
  d''-cohomologie. ({Magnetic} fields and {Morse} inequalities for
  d''-cohomology).
\newblock {\em Ann. Inst. Fourier}, 35(4):189--229, 1985.

\bibitem{DyatlovJezequel2024}
S.~Dyatlov and M.~J{\'e}z{\'e}quel.
\newblock Semiclassical measures for higher-dimensional quantum cat maps.
\newblock {\em Ann. Henri Poincar{\'e}}, 25(2):1545--1605, 2024.

\bibitem{DyatlovJin2018}
S.~Dyatlov and L.~Jin.
\newblock Semiclassical measures on hyperbolic surfaces have full support.
\newblock {\em Acta Math.}, 220(2):297--339, 2018.

\bibitem{DyatlovJinNonnenmacher2022}
S.~Dyatlov, L.~Jin, and S.~Nonnenmacher.
\newblock Control of eigenfunctions on surfaces of variable curvature.
\newblock {\em J. Am. Math. Soc.}, 35(2):361--465, 2022.

\bibitem{DyatlovZahl2016}
S.~Dyatlov and J.~Zahl.
\newblock Spectral gaps, additive energy, and a fractal uncertainty principle.
\newblock {\em Geom. Funct. Anal.}, 26(4):1011--1094, 2016.

\bibitem{FaureNonnenmacherDeBievre2003}
F.~Faure, S.~Nonnenmacher, and S.~De~Bi{\`e}vre.
\newblock Scarred eigenstates for quantum cat maps of minimal periods.
\newblock {\em Commun. Math. Phys.}, 239(3):449--492, 2003.

\bibitem{Fermanian95}
C.~Fermanian-Kammerer.
\newblock Mesures semi-classiques et \'equation de la chaleur.
\newblock 1995.
\newblock Th\`ese de l'Universit\'e Paris-Sud Orsay.

\bibitem{Fermanian00}
C.~Fermanian-Kammerer.
\newblock Mesures semi-classiques 2-microlocales.
\newblock {\em C. R. Acad. Sci. Paris S\'er. I Math.}, 331(7):515--518, 2000.

\bibitem{GlassHanKwan2012}
O.~Glass and D.~Han-Kwan.
\newblock On the controllability of the {Vlasov}-{Poisson} system in the
  presence of external force fields.
\newblock {\em J. Differ. Equations}, 252(10):5453--5491, 2012.

\bibitem{Gutkin2009}
B.~Gutkin.
\newblock Note on converse quantum ergodicity.
\newblock {\em Proc. Am. Math. Soc.}, 137(8):2795--2800, 2009.

\bibitem{HaLeePonge2019}
H.~Ha, G.~Lee, and R.~Ponge.
\newblock Pseudodifferential calculus on noncommutative tori. {I}:
  {Oscillating} integrals.
\newblock {\em Int. J. Math.}, 30(8):74, 2019.
\newblock Id/No 1950033.

\bibitem{HaLeePonge2019b}
H.~Ha, G.~Lee, and R.~Ponge.
\newblock Pseudodifferential calculus on noncommutative tori. {II}: {Main}
  properties.
\newblock {\em Int. J. Math.}, 30(8):73, 2019.
\newblock Id/No 1950034.

\bibitem{Hassell2010}
A.~Hassell.
\newblock Ergodic billiards that are not quantum unique ergodic (with an
  appendix by {Andrew} {Hassell} and {Luc} {Hillairet}).
\newblock {\em Ann. Math. (2)}, 171(1):605--619, 2010.

\bibitem{HelfferMartinezRobert1987}
B.~Helffer, A.~Martinez, and D.~Robert.
\newblock Ergodicit{\'e} et limite semi-classique. ({Ergodicity} and
  semi-classical limit).
\newblock {\em Commun. Math. Phys.}, 109:313--326, 1987.

\bibitem{HelfferPurice2010}
B.~Helffer and R.~Purice.
\newblock Magnetic calculus and semiclassical trace formulas.
\newblock {\em J. Phys. A, Math. Theor.}, 43(47):21, 2010.
\newblock Id/No 474028.

\bibitem{IftimieMantoiuPurice2007}
V.~Iftimie, M.~M{\u{a}}ntoiu, and R.~Purice.
\newblock Magnetic pseudodifferential operators.
\newblock {\em Publ. Res. Inst. Math. Sci.}, 43(3):585--623, 2007.

\bibitem{IftimieMantoiuPurice2019}
V.~Iftimie, M.~M{\u{a}}ntoiu, and R.~Purice.
\newblock Quantum observables as magnetic {{\(\Psi\)}}{DO}.
\newblock {\em Rev. Roum. Math. Pures Appl.}, 64(2-3):197--223, 2019.

\bibitem{Jaffard1990}
S.~Jaffard.
\newblock Contr{\^o}le interne exact des vibrations d'une plaque rectangulaire.
  ({Internal} exact control for the vibrations of a rectangular plate).
\newblock {\em Port. Math.}, 47(4):423--429, 1990.

\bibitem{Jakobson1997}
D.~Jakobson.
\newblock Quantum limits on flat tori.
\newblock {\em Ann. Math. (2)}, 145(2):235--266, 1997.

\bibitem{Kelmer2010}
D.~Kelmer.
\newblock Arithmetic quantum unique ergodicity for symplectic linear maps of
  the multidimensional torus.
\newblock {\em Ann. Math. (2)}, 171(2):815--879, 2010.

\bibitem{Kim2024}
E.~Kim.
\newblock Characterizing the support of semiclassical measures for
  higher-dimensional cat maps, with an appendix by {T}heresa {C}. {A}nderson
  and {R}obert {J}. {L}emke {O}liver.
\newblock Preprint, {arXiv}:2410.13449 [math.{AP}] (2024), 2024.

\bibitem{Lindenstrauss2006}
E.~Lindenstrauss.
\newblock Invariant measures and arithmetic unique ergodicity. {Appendix} by
  {E}. {Lindenstrauss} and {D}. {Rudolph}.
\newblock {\em Ann. Math. (2)}, 163(1):165--219, 2006.

\bibitem{Macia2010}
F.~Maci{\`a}.
\newblock High-frequency propagation for the {Schr{\"o}dinger} equation on the
  torus.
\newblock {\em J. Funct. Anal.}, 258(3):933--955, 2010.

\bibitem{Macia2014}
F.~Maci{\`a}.
\newblock A semiclassical measure approach to the aharonov-bohm effect.
\newblock {\em RIMS Kokyuroku, 1902 (2014), 53–60}, 1902:53--60, 2014.

\bibitem{MaciaRiviere2016}
F.~Maci{\`a} and G.~Rivi{\`e}re.
\newblock Concentration and non-concentration for the {Schr{\"o}dinger}
  evolution on {Zoll} manifolds.
\newblock {\em Commun. Math. Phys.}, 345(3):1019--1054, 2016.

\bibitem{MaciaRiviere2018}
F.~Maci{\`a} and G.~Rivi{\`e}re.
\newblock Two-microlocal regularity of quasimodes on the torus.
\newblock {\em Anal. PDE}, 11(8):2111--2136, 2018.

\bibitem{MaciaRiviere2019}
F.~Maci{\`a} and G.~Rivi{\`e}re.
\newblock Observability and quantum limits for the {Schr{\"o}dinger} equation
  on {{\(\mathbb{S}^d\)}}.
\newblock In {\em Probabilistic methods in geometry, topology and spectral
  theory.}, pages 139--153. Providence, RI: American Mathematical Society
  (AMS); Montreal: Centre de Recherches Math{\'e}matiques (CRM), 2019.

\bibitem{MantoiuPurice2004}
M.~M{\u{a}}ntoiu and R.~Purice.
\newblock The magnetic {Weyl} calculus.
\newblock {\em J. Math. Phys.}, 45(4):1394--1417, 2004.

\bibitem{MarklofRudnick2000}
J.~Marklof and Z.~Rudnick.
\newblock Quantum unique ergodicity for parabolic maps.
\newblock {\em Geom. Funct. Anal.}, 10(6):1554--1578, 2000.

\bibitem{MarklofRudnick2012}
J.~Marklof and Z.~Rudnick.
\newblock Almost all eigenfunctions of a rational polygon are uniformly
  distributed.
\newblock {\em J. Spectr. Theory}, 2(1):107--113, 2012.

\bibitem{Miller96}
L.~Miller.
\newblock Propagation d'ondes semi-classiques \`a travers une interface et
  mesures $2$-microlocales.
\newblock 1996.
\newblock Th\`ese de l'\'Ecole polytechnique.

\bibitem{Nier96}
F.~Nier.
\newblock A semi-classical picture of quantum scattering.
\newblock {\em Ann. Sci. \'Ecole Norm. Sup. (4)}, 29(2):149--183, 1996.

\bibitem{RauchTaylor1974}
J.~Rauch and M.~Taylor.
\newblock Exponential decay of solutions to hyperbolic equations in bounded
  domains.
\newblock {\em Indiana Univ. Math. J.}, 24:79--86, 1974.

\bibitem{Riviere2010b}
G.~Rivi{\`e}re.
\newblock Entropy of semiclassical measures for nonpositively curved surfaces.
\newblock {\em Ann. Henri Poincar{\'e}}, 11(6):1085--1116, 2010.

\bibitem{Riviere2010a}
G.~Rivi{\`e}re.
\newblock Entropy of semiclassical measures in dimension 2.
\newblock {\em Duke Math. J.}, 155(2):271--335, 2010.

\bibitem{Rosenzweig2006}
L.~Rosenzweig.
\newblock Quantum unique ergodicity for maps on the torus.
\newblock {\em Ann. Henri Poincar{\'e}}, 7(3):447--469, 2006.

\bibitem{RudnickSarnak1994}
Z.~Rudnick and P.~Sarnak.
\newblock The behaviour of eigenstates of arithmetic hyperbolic manifolds.
\newblock {\em Commun. Math. Phys.}, 161(1):195--213, 1994.

\bibitem{Shnirelman1974}
A.~I. Shnirel'man.
\newblock Ergodic properties of eigenfunctions.
\newblock {\em Usp. Mat. Nauk}, 29(6(180)):181--182, 1974.

\bibitem{Shnirelman2022}
A.I. Shnirel'man.
\newblock Statistical properties of eigenfunctions.
\newblock {\em Ann. Math. Qu{\'e}.}, 46(1):3--9, 2022.

\bibitem{Stein1993}
E.~M. Stein.
\newblock {\em Harmonic analysis: {Real}-variable methods, orthogonality, and
  oscillatory integrals. {With} the assistance of {Timothy} {S}. {Murphy}},
  volume~43 of {\em Princeton Math. Ser.}
\newblock Princeton, NJ: Princeton University Press, 1993.

\bibitem{VanderKam1997}
J.~M. VanderKam.
\newblock {{\(L^ \infty\)}} norms and quantum ergodicity on the sphere.
\newblock {\em Int. Math. Res. Not.}, 1997(7):329--347, 1997.

\bibitem{VasyWunsch2009}
A.~Vasy and J.~Wunsch.
\newblock Semiclassical second microlocal propagation of regularity and
  integrable systems.
\newblock {\em J. Anal. Math.}, 108:119--157, 2009.

\bibitem{Weinstein1977}
A.~Weinstein.
\newblock Asymptotics of eigenvalue clusters for the {Laplacian} plus a
  potential.
\newblock {\em Duke Math. J.}, 44:883--892, 1977.

\bibitem{Wunsch2008}
J.~Wunsch.
\newblock Spreading of {Lagrangian} regularity on rational invariant tori.
\newblock {\em Commun. Math. Phys.}, 279(2):487--496, 2008.

\bibitem{Zelditch1987}
S.~Zelditch.
\newblock Uniform distribution of eigenfunctions on compact hyperbolic
  surfaces.
\newblock {\em Duke Math. J.}, 55:919--941, 1987.

\bibitem{Zelditch1992b}
S.~Zelditch.
\newblock On a ``{Quantum} {Chaos}'' theorem of {R}. {Schrader} and {M}.
  {Taylor}.
\newblock {\em J. Funct. Anal.}, 109(1):1--21, 1992.

\bibitem{Zelditch1992a}
S.~Zelditch.
\newblock Quantum ergodicity on the sphere.
\newblock {\em Commun. Math. Phys.}, 146(1):61--71, 1992.

\bibitem{Zelditch2010}
S.~Zelditch.
\newblock Recent developments in mathematical quantum chaos.
\newblock In {\em Current developments in mathematics, 2009}, pages 115--204.
  Somerville, MA: International Press, 2010.

\bibitem{Zworski}
M.~Zworski.
\newblock {\em Semiclassical analysis}, volume 138 of {\em Graduate Studies in
  Mathematics}.
\newblock American Mathematical Society, Providence, RI, 2012.

\bibitem{Zygmund1974}
A.~Zygmund.
\newblock On {Fourier} coefficients and transforms of functions of two
  variables.
\newblock {\em Stud. Math.}, 50:189--201, 1974.

\end{thebibliography}

\end{document}